\begin{document}

\newtheorem{theorem}{Theorem}[section]
\newtheorem{result}[theorem]{Result}
\newtheorem{fact}[theorem]{Fact}
\newtheorem{conjecture}[theorem]{Conjecture}
\newtheorem{definition}[theorem]{Definition}
\newtheorem{lemma}[theorem]{Lemma}
\newtheorem{proposition}[theorem]{Proposition}

\newtheorem{corollary}[theorem]{Corollary}
\newtheorem{facts}[theorem]{Facts}
\newtheorem{question}[theorem]{Question}
\newtheorem{props}[theorem]{Properties}
\newtheorem{ex}[theorem]{Example}
\newtheorem*{remark}{Remark}

\newcommand{\notes} {\noindent \textbf{Notes.  }}
\newcommand{\note} {\noindent \textbf{Note.  }}
\newcommand{\defn} {\noindent \textbf{Definition.  }}
\newcommand{\defns} {\noindent \textbf{Definitions.  }}
\newcommand{\x}{{\bf x}}
\newcommand{\z}{{\bf z}}
\newcommand{\B}{{\bf b}}
\newcommand{\V}{{\bf v}}
\newcommand{\T}{\mathcal{T}}
\newcommand{\Z}{\mathbb{Z}}
\newcommand{\Hp}{\mathbb{H}}
\newcommand{\D}{\mathbb{D}}
\newcommand{\R}{\mathbb{R}}
\newcommand{\N}{\mathbb{N}}
\renewcommand{\B}{\mathbb{B}}
\newcommand{\C}{\mathbb{C}}
\newcommand{\dt}{{\mathrm{det }\;}}
 \newcommand{\adj}{{\mathrm{adj}\;}}
 \newcommand{\0}{{\bf O}}
 \newcommand{\av}{\arrowvert}
 \newcommand{\zbar}{\overline{z}}
 \newcommand{\htt}{\widetilde{h}}
\newcommand{\ty}{\mathcal{T}}
\renewcommand\Re{\operatorname{Re}}
\renewcommand\Im{\operatorname{Im}}
\newcommand{\diam}{\operatorname{diam}}

\newcommand{\ds}{\displaystyle}
\numberwithin{equation}{section}

\renewcommand{\theenumi}{(\roman{enumi})}
\renewcommand{\labelenumi}{\theenumi}

\title{The Moduli space of Riemann Surfaces of Large Genus}

%\author{Alastair Fletcher, Jeremy Kahn \& Vladimir Markovic}

\author{Alastair Fletcher}
\address{Department of Mathematical Sciences, Northern Illinois University, Dekalb, IL 60115, USA}
\email{fletcher@math.niu.edu}

\author{Jeremy Kahn}
\address{Department of Mathematics, Brown University, Providence, RI 02912, USA}
\email{kahn@math.brown.edu}

\author{Vladimir Markovic}
\address{Department of Mathematics, Caltech, Pasadena, CA 91125, USA}
\email{markovic@caltech.edu}

\begin{abstract}
Let $\mathcal{M}_{g,\epsilon}$ be the $\epsilon$-thick part of the moduli space $\mathcal{M}_g$ of closed genus $g$ surfaces.
In this article, we show that the number of balls of radius $r$ needed to cover $\mathcal{M}_{g,\epsilon}$ is bounded below by $(c_1g)^{2g}$ and bounded above by $(c_2g)^{2g}$, where the constants $c_1,c_2$ depend only on $\epsilon$ and $r$, and in particular not on $g$. Using this counting result we prove that there are  Riemann surfaces of arbitrarily large injectivity radius that are not close (in the Teichm\"uller metric) to a finite cover of a fixed closed Riemann surface. This result illustrates  the sharpness of the Ehrenpreis conjecture.

\end{abstract}

\maketitle

\section{Introduction}

\subsection{The covering number}
Let $(X,d)$ be a metric space. A natural question to ask is the following: given a set $E\subset X$, how many balls of radius $r$ does it take to cover $E$? It is an easy exercise to check that if $X=\R^n$ with the Euclidean norm and $E$ is a ball of radius $R$, then it takes $C(R/r)^n$ balls of radius $r$ to cover $E$, for some constant $C$ depending only on $n$. 
To make this notion more precise, we introduce the following definition.

\begin{definition}
Let $(X,d_X)$ be a metric space and $E \subset X$. Then the \emph{$r$-covering number} $\eta_{X}(E,r)$ is the minimal number of balls in $X$ of radius $r$ needed to cover $E$ in $X$.
\end{definition}

We will usually suppress the $r$ and just use the terminology \emph{covering number}, if the context is clear.
Observe that $\eta_X$ may be infinite: consider the example of $X=l^{\infty}$, $E$ the unit ball in $X$ and $r<1$.

The following two elementary facts about covering numbers will be used throughout the paper.
If $E \subset F$ in $X$, then
\begin{equation}
\label{obvfact}
\eta_{X} (E,r) \leq \eta_{X} (F,r).
\end{equation}

Let $E \subset Y \subset X$. Then considering $Y$ as the metric space $(Y,d_X)$ we have
\begin{equation}
\label{obvfact-1}
\eta_{X} (E,r) \leq \eta_{Y} (E,r) \leq \eta_X(E,r/2).
\end{equation}

In this paper, we are interested in estimating the covering number for the thick part of moduli space viewed as a subset of moduli space. 
For $g \geq 2$, let $\mathcal{M}_g$ be the moduli space of closed genus $g$ surfaces with its Teichm\"uller metric. 
Fix once and for all $\epsilon >0$, and denote by $\mathcal{M}_{g,\epsilon}$ the $\epsilon$-thick part of moduli space. For simplicity we let 
$$
\eta( \mathcal{M}_{g,\epsilon}, r) =\eta_{\mathcal{M}_{g}} ( \mathcal{M}_{g,\epsilon}, r).
$$
The first main theorem of this paper is as follows.
\begin{theorem}
\label{mainthmmg}
Let $\epsilon >0$ and $r>0$. Then there exists constants $c_1,c_2>0$ which depend only on $\epsilon$ and $r$ such that
\begin{equation} 
\label{mainthmeq}
(c_1g)^{2g} \leq \eta ( \mathcal{M}_{g,\epsilon}, r) \leq (c_2g)^{2g},
\end{equation}
for all large $g$.
\end{theorem}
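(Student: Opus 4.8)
The plan is to pin down $\eta(\mathcal{M}_{g,\epsilon},r)$ from above and below by comparing it with a ratio ``(volume of the thick part)$/$(volume of one small Teichm\"uller ball)'', where volume means the Weil--Petersson volume $\mathrm{Vol}_{\mathrm{WP}}$. Let $S\subset\mathcal{M}_{g,\epsilon}$ be a maximal $r$-separated set. Maximality forces the balls $B(x,r)$, $x\in S$, to cover $\mathcal{M}_{g,\epsilon}$, so $\eta(\mathcal{M}_{g,\epsilon},r)\le|S|$; and the balls $B(x,r/2)$, $x\in S$, are pairwise disjoint and lie in $\mathcal{M}_g$, so $|S|\cdot\inf_{x\in\mathcal{M}_{g,\epsilon}}\mathrm{Vol}_{\mathrm{WP}}(B(x,r/2))\le\mathrm{Vol}_{\mathrm{WP}}(\mathcal{M}_g)$. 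For the lower bound, any $2r$-separated set $T\subset\mathcal{M}_{g,\epsilon}$ satisfies $|T|\le\eta(\mathcal{M}_{g,\epsilon},r)$ since an $r$-ball meets $T$ in at most one point, and taking $T$ maximal gives $\mathrm{Vol}_{\mathrm{WP}}(\mathcal{M}_{g,\epsilon})\le|T|\cdot\sup_{x\in\mathcal{M}_{g,\epsilon}}\mathrm{Vol}_{\mathrm{WP}}(B(x,2r))$. Thus the theorem follows once we establish: \textbf{(i)} $\mathrm{Vol}_{\mathrm{WP}}(\mathcal{M}_g)\le(c_2'g)^{2g}$ and $\mathrm{Vol}_{\mathrm{WP}}(\mathcal{M}_{g,\epsilon})\ge(c_1'g)^{2g}$ for large $g$, with $c_1',c_2'$ depending only on $\epsilon$; and \textbf{(ii)} for $x$ in the $\epsilon$-thick part and $\rho$ fixed, $\mathrm{Vol}_{\mathrm{WP}}(B(x,\rho))$ is bounded above and below by positive constants depending only on $\epsilon$ and $\rho$ (or, more than enough for the argument, $\log\mathrm{Vol}_{\mathrm{WP}}(B(x,\rho))=O_{\epsilon,\rho}(g)$).

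Claim \textbf{(i)} is essentially Mirzakhani's large-genus volume asymptotics: $\mathrm{Vol}_{\mathrm{WP}}(\mathcal{M}_g)\asymp g^{-c}(4\pi^2)^{2g}(2g-3)!$, which by Stirling is of order $(Cg)^{2g}$, giving the upper bound on $\mathrm{Vol}_{\mathrm{WP}}(\mathcal{M}_g)$. For the lower bound on $\mathrm{Vol}_{\mathrm{WP}}(\mathcal{M}_{g,\epsilon})$ one uses, in addition, that the $\epsilon$-thin part carries at most a bounded proportion of the total Weil--Petersson volume, i.e.\ $\mathrm{Vol}_{\mathrm{WP}}(\mathcal{M}_{g,\epsilon})\ge c(\epsilon)\,\mathrm{Vol}_{\mathrm{WP}}(\mathcal{M}_g)$; this is an integration-over-moduli-space estimate of Mirzakhani type (the expected number of closed geodesics of length below $2\epsilon$ is bounded independently of $g$). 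Wolpert's formula $\omega_{\mathrm{WP}}=\sum_i d\ell_i\wedge d\tau_i$ for the Weil--Petersson form in Fenchel--Nielsen coordinates is the bridge between these volume statements and hyperbolic geometry, and is also what one uses to check, in any such argument, that the regions being integrated over genuinely lie in the thick part.

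The heart of the matter is claim \textbf{(ii)}, and its lower bound is where the real difficulty lies: a ball of radius $\rho$ in a Euclidean space of dimension $6g-6$ has volume of order $g^{-3g}$, and if a Teichm\"uller ball about a thick point behaved this way it would force $\eta$ to exceed every bound of the form $(c_2g)^{2g}$. What rescues the situation is that over the $\epsilon$-thick part the Teichm\"uller metric is bi-Lipschitz, with constants depending only on $\epsilon$, to a ``sup--type'' metric built from the logarithmic lengths and normalised twists of the curves of bounded length, while, by Wolpert's formula, the Weil--Petersson volume form is comparably non-degenerate in the same coordinates; making this comparison effective, and tracking how it degrades when an $r$-ball centred at an $\epsilon$-thick point reaches into the $\epsilon'$-thick part for some $\epsilon'<\epsilon$, is the step I expect to be the main obstacle.

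Granting \textbf{(i)} and \textbf{(ii)}, the two displayed inequalities immediately yield constants $c_1,c_2>0$, depending only on $\epsilon$ and $r$, with $(c_1g)^{2g}\le\eta(\mathcal{M}_{g,\epsilon},r)\le(c_2g)^{2g}$ for all sufficiently large $g$; the sub-exponential (indeed bounded) factors coming from the implied constants in \textbf{(i)} and \textbf{(ii)} are absorbed into the base $c_1$ or $c_2$. The elementary facts \eqref{obvfact} and \eqref{obvfact-1} are what allow us to pass freely between covering numbers measured in $\mathcal{M}_g$ and inside the thick part, and to absorb the harmless factor-of-$2$ changes of radius made above.
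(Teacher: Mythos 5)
Your approach is genuinely different from the paper's: you turn the problem into a packing estimate against Weil--Petersson volume, leaning on Mirzakhani-type asymptotics, whereas the paper never invokes WP volume at all. The paper proves the upper bound by associating to each $\epsilon$-thick surface a bounded-degree geodesic triangulation and counting equivalence classes of such triangulations (via Lemma \ref{kmlemma2} from \cite{KM}), proves the lower bound by counting genus-$g$ covers of a fixed genus-$2$ surface (via \cite{MP}) and showing that any Teichm\"uller ball of small fixed radius can meet at most $D^g$ of them, and then transports both bounds to arbitrary $r$ using the Bers embedding (Theorem \ref{thmteich}, based on Theorem \ref{thm23may}). Both the lower-bound and upper-bound inputs there are essentially combinatorial and entirely bypass the analytic machinery of WP geometry. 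The general skeleton of your argument (maximal separated sets, \eqref{obvfact}, \eqref{obvfact-1}) is correct, and your claim \textbf{(i)} is indeed in the literature: the asymptotics of $\mathrm{Vol}_{\mathrm{WP}}(\mathcal{M}_g)$ are due to Schumacher--Trapani / Mirzakhani--Zograf, and the fact that the $\epsilon$-thick part carries a $g$-independent positive fraction of the volume follows from Mirzakhani's integration formula (indeed the number of geodesics shorter than $2\epsilon$ is asymptotically Poisson with a finite, $g$-independent mean, so the thick part has asymptotic measure $e^{-\lambda(\epsilon)}>0$).

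The genuine gap is your claim \textbf{(ii)}, and specifically the lower volume bound for a Teichm\"uller ball of fixed radius centred at an $\epsilon$-thick point, which you correctly identify as ``the main obstacle'' but do not fill. The statement you need---that $\log \mathrm{Vol}_{\mathrm{WP}}\bigl(B_{d_{\T}}(x,\rho)\bigr)=O_{\epsilon,\rho}(g)$ from below as well as above---is not proved in your proposal and is not a clean citation. The bi-Lipschitz comparison between $d_{\T}$ and a sup-type Fenchel--Nielsen metric on the thick part, with constants depending only on $\epsilon$ and not on $g$, is delicate: the relevant results of Minsky and Rafi give combinatorial formulas for the Teichm\"uller metric up to additive errors and control over a single pants decomposition; one must check (a) that a single Bers pants decomposition of an $\epsilon$-thick surface remains adequate for all $y$ with $d_{\T}(x,y)<\rho$, (b) that the additive errors can be absorbed uniformly in $g$, and (c) that the twist normalisation $\tau_i/\ell_i$ matches the scaling in Wolpert's form $\tfrac12\sum d\ell_i\wedge d\tau_i$ so that the product $\prod_i \ell_i^2$ stays bounded below exponentially in $g$ with base depending only on $\epsilon$. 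None of this is trivial, and since the entire upper bound on $\eta$ rests on it, the argument is not complete as written. Until claim \textbf{(ii)} is proved, you have in effect reduced one hard theorem to another; the paper's route, by contrast, settles the matter with explicit counting (triangulations and finite covers) plus an elementary Bers--embedding comparison.
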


\subsection{Covering  the Moduli space by balls}

Throughout, we denote by $B_X(p,r)$ the open ball of radius $r$, centered at $p \in X$, in the metric space $(X,d)$. 

We prove Theorem  \ref{mainthmmg} by first showing that each of the two inequalities in \eqref{mainthmeq} holds for a particular value of $r$. The following upper bound on the covering number is in spirit  similar to the one obtained in \cite{KM} on the number of homotopy classes of essential surfaces in a given hyperbolic 3-manifold. 

\begin{theorem}
\label{upperbound}
Let $\epsilon >0$. Then there exist $r_u>0$ and $c_u>0$ such that
\[ \eta (\mathcal{M}_{g,\epsilon},r_u) \leq (c_ug)^{2g}\]
for all large $g$.
\end{theorem}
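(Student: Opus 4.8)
The plan is to exploit the fact that an $\epsilon$-thick closed hyperbolic surface of genus $g$ has a pants decomposition all of whose cuff lengths are bounded above by a universal constant $L=L(\epsilon)$ (the Bers constant), and simultaneously bounded below by $2\epsilon$ (collar lemma / thickness). A surface in $\mathcal{M}_{g,\epsilon}$ is then determined, up to the action of the mapping class group, by the combinatorial type of its pants decomposition (a trivalent graph on $2g-2$ vertices and $3g-3$ edges) together with the Fenchel--Nielsen coordinates: $3g-3$ length parameters in $[2\epsilon,L]$ and $3g-3$ twist parameters. The key point is that, after fixing the combinatorial type, we only need to let each twist range over an interval of length comparable to the corresponding cuff length (twisting by a full cuff length is realized by a mapping class, so it does not change the point in moduli space), and each of the $6g-6$ real Fenchel--Nielsen coordinates then ranges over an interval whose length is bounded above and below in terms of $\epsilon$ alone.

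Concretely, I would carry out the following steps. First, recall/cite the Bers theorem to fix $L(\epsilon)$ and note there are at most $(C(\epsilon))^g$ combinatorial types of trivalent graphs with $3g-3$ edges (a crude count: choose a trivalent graph on $2g-2$ vertices; the number of such is $g^{O(g)}$, which is absorbed into the final $(c_u g)^{2g}$ — in fact even $g^{Cg}\le (c_ug)^{2g}$ for large $g$). Second, for a fixed combinatorial type, use the standard Lipschitz estimate for Fenchel--Nielsen coordinates in the Teichm\"uller (or Weil--Petersson-comparable, but here Teichm\"uller) metric on the thick part: there is $K=K(\epsilon)$ so that moving each length coordinate by at most $\delta$ and each twist coordinate by at most $\delta$ moves the point by at most $K\delta$ in the Teichm\"uller metric of Teichm\"uller space, hence of moduli space. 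Third, cover the box $[2\epsilon,L]^{3g-3}\times\prod_i[0,\ell_i]$ in Fenchel--Nielsen coordinates by sub-boxes of side $\delta = r_u/K$; since each of the $6g-6$ coordinate ranges has length at most $L$, the number of sub-boxes is at most $(L/\delta + 1)^{6g-6} = (c(\epsilon,r_u))^{6g-6}$. Fourth, multiply: $\eta(\mathcal{M}_{g,\epsilon},r_u) \le (C(\epsilon))^g \cdot (c(\epsilon,r_u))^{6g-6} \le (c_u g)^{2g}$ for all large $g$, since any expression of the form $(\text{const})^{g}$ is eventually dominated by $(c_u g)^{2g}$ for any fixed $c_u>0$.

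The main obstacle — and the step requiring genuine care rather than bookkeeping — is the third-to-fourth step as phrased, because the boxes live in \emph{Teichm\"uller} space and we need the image in \emph{moduli} space. One must argue that the twist parameters can be restricted to $[0,\ell_i]$ without loss (full Dehn twists act by the mapping class group), and that the resulting collection of Teichm\"uller-metric balls, when pushed down to $\mathcal{M}_g$, still covers $\mathcal{M}_{g,\epsilon}$: this is immediate since the projection $\mathcal{T}_g\to\mathcal{M}_g$ is $1$-Lipschitz, so the image of a cover is a cover. The other delicate point is justifying the uniform (in $g$) Lipschitz constant $K(\epsilon)$ for Fenchel--Nielsen coordinates on the thick part; this is where the thickness hypothesis is essential, via the collar lemma bounding geometry of the pants curves, and one should either cite a precise version (e.g. from work of Wolpert or from Buser) or sketch it by estimating how a small change in a single length or twist coordinate extends to a quasiconformal self-map of the surface with dilatation $1+O(\delta/\epsilon)$.
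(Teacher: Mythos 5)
Your route through pants decompositions and Fenchel--Nielsen coordinates is genuinely different from the paper's (which uses geodesic triangulations), but it breaks at the very first step: there is \emph{no} genus-independent Bers constant $L(\epsilon)$ for $\epsilon$-thick surfaces. The Bers constant necessarily grows with the genus, and thickness does not rescue you. A cheap way to see this is via surfaces of large injectivity radius: for all large $g$ there exist closed surfaces of genus $g$ whose injectivity radius is (say) $\geq c\log g$; these are in particular $\epsilon$-thick, yet every curve in any pants decomposition has length $\geq 2c\log g$, which eventually exceeds any proposed $L(\epsilon)$. (A sharper statement, due to Guth, Parlier and Young, is that random surfaces---which are $\epsilon$-thick with probability bounded away from $0$---have total pants length at least $g^{7/6-o(1)}$, so some cuff has length $\gtrsim g^{1/6-o(1)}$.) There is also a built-in consistency check that should have raised a flag: had your first claim been true, your own count would give roughly $g^{g}\cdot C(\epsilon,r_u)^{g}$ (the number of isomorphism types of trivalent graphs on $2g-2$ vertices is of order $g^{g}$ up to an exponential factor, and each of the $6g-6$ bounded Fenchel--Nielsen intervals contributes only a constant to the $g$), which would \emph{contradict} the lower bound $(c_1 g)^{2g}$ of Theorem~\ref{mainthmmg}. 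So something in the argument had to fail.

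The failure then propagates through the parts you flagged as delicate. With the honest bound $L_g=O(g)$, each Fenchel--Nielsen coordinate can range over an interval of length $O(g)$, so your step~3 produces on the order of $(Cg/\delta)^{6g-6}$ boxes, which is $g^{6g}$---far larger than the target $(c_u g)^{2g}$. Worse, the uniform Lipschitz constant $K(\epsilon)$ in step~2 is also lost: a fractional twist (or a length perturbation) about a curve of length $\ell$ is supported in a collar whose width shrinks to $0$ as $\ell\to\infty$, so the quasiconformal dilatation of that deformation degrades as $\ell$ grows, and thickness bounds $\ell$ only from below. The paper sidesteps both problems by using triangulations instead of pants: by Lemma~\ref{kmlemma1}, every $\epsilon$-thick surface carries a geodesic triangulation with $O(g)$ simplices, edges of length pinched in $[\epsilon/2,\epsilon]$, and bounded vertex degree, all with constants depending only on $\epsilon$; two surfaces carrying equivalent such triangulations are then $K_0(\epsilon)$-quasiconformally equivalent by Lemma~\ref{qcext}, and the number of equivalence classes is $(Cg)^{2g}$ by Theorem~\ref{kmlemma2}. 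The moral is that thickness is a \emph{local} bounded-geometry hypothesis: it controls local decompositions such as bounded-size triangulations, but gives no upper bound on the cuff lengths in a global structure like a pants decomposition.
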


Next, we have the lower bound:

\begin{theorem}
\label{lowerbound}
Let $\epsilon >0$. Then there exist $r_l>0$ and $c_l>0$ such that
\[ \eta (\mathcal{M}_{g,\epsilon},r_l) \geq (c_lg)^{2g}\]
for all large $g$.
\end{theorem}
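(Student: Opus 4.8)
The plan is to prove the lower bound on $\eta(\mathcal{M}_{g,\epsilon},r_l)$ by exhibiting a large family of genus $g$ surfaces that are pairwise far apart in the Teichm\"uller metric, so that no single ball of radius $r_l$ can contain two of them. The natural source of such a family is pants decompositions: a genus $g$ surface decomposes into $2g-2$ pairs of pants glued along $3g-3$ curves, and a hyperbolic structure is determined by the Fenchel--Nielsen coordinates, namely the $3g-3$ cuff lengths together with the $3g-3$ twist parameters. If we fix all cuff lengths to equal some constant $\ell_0$ (chosen depending on $\epsilon$ so the resulting surfaces are $\epsilon$-thick) and only vary the combinatorics of the gluing pattern, we obtain a supply of surfaces indexed by trivalent graphs on $2g-2$ vertices. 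The number of such graphs (equivalently, of topological types of pants decompositions) grows like $(cg)^{2g}$; this is essentially a factorial-type count, and is where the $2g$ in the exponent comes from.

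The key steps, in order, are as follows. First, fix $\ell_0=\ell_0(\epsilon)$ and note that a surface built from pairs of pants with all cuffs of length $\ell_0$ and any choice of twists has injectivity radius bounded below by a constant $\delta(\ell_0)>0$, hence lies in $\mathcal{M}_{g,\epsilon}$ once $\ell_0$ is small enough relative to $\epsilon$ — actually we want $\epsilon$ small, so take $\ell_0$ so that the collar lemma guarantees $\epsilon$-thickness; simultaneously $\ell_0$ should be bounded below so the surfaces are not too thick in an uncontrolled way, but any fixed $\ell_0$ works. Second, enumerate the trivalent graphs $\Gamma$ on the vertex set of size $2g-2$ and for each one build a surface $X_\Gamma$ with all cuffs $\ell_0$ and (say) zero twists; combinatorially inequivalent graphs give non-homeomorphic-marked but we only care about the unmarked moduli point. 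Third, and this is the crux, show that if $X_\Gamma$ and $X_{\Gamma'}$ are within Teichm\"uller distance $r_l$ in $\mathcal{M}_g$, then $\Gamma$ and $\Gamma'$ must be isomorphic graphs; this lets us conclude $\eta(\mathcal{M}_{g,\epsilon},r_l)$ is at least the number of isomorphism classes, which is $(c_lg)^{2g}$ after dividing the labeled count by $|S_{2g-2}|\le (2g)^{2g}$ — one must check the labeled count is large enough that this division still leaves $(c_lg)^{2g}$, which it is since the labeled count is of order $(cg)^{3g}$-ish. For the third step I would use the fact that a Teichm\"uller (or quasiconformal) map of small dilatation moves the short geodesics of $X_\Gamma$ to curves of controlled length on $X_{\Gamma'}$, and that on an $\epsilon$-thick surface the only geodesics shorter than a definite threshold are (up to bounded combinatorics) the original cuffs; thus a small-distance map identifies the pants decompositions up to bounded ambiguity, hence determines $\Gamma$ up to finitely many possibilities, and absorbing that finite factor into $c_l$ finishes it.

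The main obstacle is the third step: controlling how the pants decomposition, and in particular the graph $\Gamma$, is transported under a bounded-dilatation quasiconformal map. The subtlety is that a surface in $\mathcal{M}_{g,\epsilon}$ can have many short-ish curves (the $\ell_0$-cuffs and various curves of length $O(1)$), so one cannot simply read off $\Gamma$ from ``the'' systole; instead one must argue that the collection of curves of length $\le L$, for a suitable $L=L(\ell_0,r_l)$, together with their intersection pattern, reconstructs $\Gamma$ up to a bounded amount of data, and that a quasiconformal map of dilatation $\le e^{2r_l}$ respects this collection. One clean way around this is to instead choose the family $\{X_\Gamma\}$ more rigidly — e.g. make the cuff lengths of $X_\Gamma$ take a few distinct prescribed values arranged so that the graph is forced — but the counting then has to be redone; I expect the cleanest route is a direct argument that Teichm\"uller balls of a fixed radius in $\mathcal{M}_{g,\epsilon}$ contain only boundedly many of the $X_\Gamma$, proved by a volume/packing estimate in Fenchel--Nielsen coordinates together with the combinatorial rigidity of the underlying graph.
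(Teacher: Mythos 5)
Your strategy is genuinely different from the paper's (which counts finite covers of a fixed genus-$2$ surface and invokes the M\"uller–Puchta subgroup-growth estimate), but as written it cannot produce the $(c_lg)^{2g}$ bound: the combinatorial count in your second step is off by a factor of $g^g$. A pants decomposition of a closed genus-$g$ surface is encoded by a trivalent multigraph on $2g-2$ vertices with $3g-3$ edges. In your proposal you correctly note that the number of \emph{labeled} such graphs is of order $(cg)^{3g}$ (configuration model), but dividing by $|S_{2g-2}|\approx (2g/e)^{2g}$ to pass to isomorphism classes leaves only $(c'g)^{g}$, not $(c'g)^{2g}$. Indeed, taking logarithms,
\[
3g\log(cg)-2g\log(2g)=g\log g + O(g),
\]
so the unlabeled count of trivalent graphs on $2g-2$ vertices is $(c'g)^{g}$. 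Mapping-class-group orbits of pants decompositions are in bijection with isomorphism classes of such graphs (up to a bounded factor), so your family $\{X_\Gamma\}$ has only $(c'g)^{g}$ members. Even if your third step were carried out perfectly, you would only prove $\eta(\mathcal{M}_{g,\epsilon},r_l)\geq (c'g)^{g}$, which is exponentially weaker than the theorem requires. Varying twists or cuff lengths over any fixed finite set only multiplies by a factor of the form $C^{3g}$, which is still exponential in $g$ and cannot close the $g^{g}$ gap; the pants-decomposition approach is intrinsically bounded by this count.

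This is precisely why the paper builds its family from unbranched genus-$g$ covers of a fixed genus-$2$ surface $S_0$: the subgroup-growth function of the genus-$2$ surface group at index $d=g-1$ is $\sim (d!)^{2}\sim (Pg)^{2g}$, which is exactly the rate you need, and the extra $g^{g}$ comes from the monodromy data of the cover rather than from the combinatorics of a cubic graph. Separately, your third step has a genuine subtlety you flag but do not resolve: with all cuff lengths equal to $\ell_0$ and all twists zero, different graphs $\Gamma$ need not give different points of $\mathcal{M}_g$, and an isometry between $X_\Gamma$ and $X_{\Gamma'}$ need not carry one pants decomposition to the other, so ``recovering $\Gamma$'' from the moduli point requires an argument about which closed geodesics of length $\le L$ exist and how they organize into a pants decomposition. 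That issue is repairable with enough work, but the counting deficit is not — the construction must be replaced, not patched, to reach $(c_lg)^{2g}$.
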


Once we have each of the inequalities in \eqref{mainthmeq} for particular values of $r$, we will need to extend the inequalities obtained in Theorems \ref{upperbound} and \ref{lowerbound} to all values of $r$. The key tool here is Theorem \ref{thmteich} below. We first fix some notation. 

Let $S_g$ be a closed topological surface of genus $g\geq 2$. Recall that $\mathcal{T}_g$ is the set of marked Riemann surfaces $S$ where the marking is given by the homotopy class of a homeomorphism $f:S_g \to S$. We often suppress the marking and simply say $S \in \T_g$.  Recall that the Teichm\"{u}ller metric $d_{\T}$ is defined by
\[ d_{\T}(S_1,S_2) = \inf \left \{  \log \sqrt{K} : f:S_1 \to S_2 \text{ is $K$-quasiconformal} \right \},\]
see for example \cite{FMbook}. 

\begin{theorem}
\label{thmteich}
Let $S \in \T_g$ denote a  Riemann surface with injectivity radius $\epsilon >0$. Let $R>0$ and let $B_{\T_{g}}(S,R)$ be the ball in $\T_g$ with respect to the Teichm\"{u}ller metric on $\T_g$. 
Then there are constants $d_1=d_1(R,r) \ge 0$ and $d_2=d_2(\epsilon,R,r)>1$ (in particular, $d_1$ and $d_2$ do not depend on the genus $g$) such that
\[ 
d^{g}_1 \leq \eta_{\T_{g}}(B_{\T_{g}}(S,R),r) \leq d^{g}_{2}, 
\]
where $d_1(R,r) \to \infty$ as $r \to 0$, for any fixed $R>0$.
\end{theorem}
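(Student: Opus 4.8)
We need to bound the $r$-covering number of a Teichmüller ball $B_{\T_g}(S, R)$ around a surface $S$ with injectivity radius $\geq \epsilon$. This is fundamentally a "local" statement about Teichmüller space — it should follow from the fact that $\T_g$ is bi-Lipschitz (on bounded regions, with constants depending on the thickness) to a Euclidean space of dimension $6g - 6$, or at least comparable to such.

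**Key ingredients I'd want:**
- Teichmüller space $\T_g$ has real dimension $6g-6$.
- Near a point $S$ of injectivity radius $\geq \epsilon$, and within a fixed radius $R$, the Teichmüller metric is comparable (with constants depending only on $\epsilon$ and $R$) to a Riemannian metric — e.g. the Weil–Petersson metric, or Fenchel–Nielsen coordinates with respect to a bounded-length pants decomposition. In the thick part, short geodesics don't appear, so one stays in a region where these comparisons are uniform.

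**The approach.**

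The plan is to reduce the statement to the elementary Euclidean fact quoted in the introduction: a ball of radius $R$ in $\R^n$ requires $C(R/r)^n$ balls of radius $r$ to cover it, and conversely at least $c(R/r)^n$. Here $n = 6g-6$, so $(R/r)^{6g-6}$ is of the form $d^g$ with $d$ depending on $R/r$, giving exactly the shape $d_1^g \le \eta \le d_2^g$.

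First, for the upper bound: I want to exhibit, for a surface $S$ of injectivity radius $\ge \epsilon$, a bi-Lipschitz chart from $B_{\T_g}(S, R)$ (or a slightly larger ball) into $\R^{6g-6}$ with bi-Lipschitz constant $L = L(\epsilon, R)$ independent of $g$. The cleanest route is Fenchel–Nielsen coordinates: since $S$ is $\epsilon$-thick, a theorem of Bers gives a pants decomposition of $S$ with all cuff lengths bounded above by a constant $L_B$ depending only on the genus — but in fact, by the Bers constant being linear in $g$ one has to be slightly careful; the point is rather that within Teichmüller distance $R$ of an $\epsilon$-thick surface, all curves in a fixed (Bers) pants decomposition have length bounded above (by $e^{2R} L_B$) and below (by $2\epsilon e^{-2R}$), so the Fenchel–Nielsen coordinates range over a product of intervals of bounded size, and on this region the comparison between $d_\T$ and the Fenchel–Nielsen/Weil–Petersson metric is uniform. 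Cover this box in $\R^{6g-6}$ by $\le (c_2 g)^{?}$ — no: by $C \cdot (\text{const})^{6g-6} \le d_2^g$ Euclidean balls of the appropriate radius; pull back to get the covering of $B_{\T_g}(S,R)$ by $d_2^g$ Teichmüller balls of radius $r$.

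For the lower bound: this is where one must be slightly more careful, because a priori $B_{\T_g}(S,R)$ could be "thin" in some directions. The cleanest argument is a volume/packing argument: choose a maximal $2r$-separated set in $B_{\T_g}(S, R/2)$; its cardinality is a lower bound for $\eta_{\T_g}(B_{\T_g}(S,R), r)$ up to standard adjustments (balls of radius $r$ around a $2r$-separated set are disjoint, and if they cover then... — actually one uses that a maximal $r$-separated net in $B(S,R/2)$ has its $r$-balls covering $B(S, R/2)$, while being a lower bound argument needs the separated set). Concretely: any $r$-covering of $B(S,R)$ restricts to an $r$-covering of $B(S,R/2)$, and a single $r$-ball can contain at most one point of a $2r$-separated set, so $\eta \ge \#(\text{any } 2r\text{-separated subset of } B(S,R/2))$. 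To lower-bound the size of such a separated set, I again pass to Fenchel–Nielsen coordinates near $S$: there is a sub-box of $\R^{6g-6}$ of definite size (sidelengths depending on $\epsilon, R$) that maps into $B_{\T_g}(S, R/2)$ under the bi-Lipschitz chart, and a Euclidean box of sidelength $\rho$ in dimension $6g-6$ contains a $2Lr$-separated set of size $\ge (\rho/(cLr))^{6g-6} = d_1^g$ with $d_1 = d_1(R,r)$; moreover $d_1 \to \infty$ as $r \to 0$ since then $\rho/(cLr) \to \infty$. Pulling back gives a $2r$-separated set in $B_{\T_g}(S,R/2)$ of size $d_1^g$, completing the lower bound.

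**The main obstacle.** The crux is establishing the uniform (genus-independent) bi-Lipschitz comparison between the Teichmüller metric and Fenchel–Nielsen coordinates on the region $B_{\T_g}(S, R)$ with $S$ in the $\epsilon$-thick part. One has to know: (a) $S$ admits a pants decomposition with cuff lengths in a controlled range, (b) this decomposition's twist and length parameters still coordinatize the whole ball $B_{\T_g}(S,R)$, staying in a thick region with uniformly bounded cuff lengths, and (c) on such a region the metric comparison constant does not degenerate as $g \to \infty$. Point (c) is the subtle one — it requires that the relevant estimates (e.g. Wolpert-type bounds relating $d_\T$ to Weil–Petersson or to the sup of changes in log-length and twist) are local in nature and do not accumulate error across the $3g-3$ pairs of pants. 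I expect this to follow from the product-like structure of these coordinates together with the standard fact that $d_\T(S_1, S_2) \ge \frac12 |\log(\ell_\gamma(S_1)/\ell_\gamma(S_2))|$ for every curve $\gamma$ (giving one direction cheaply and uniformly) and a Bers-embedding or explicit quasiconformal-map construction for the other; assembling these carefully into the claimed uniform bound is the technical heart of the argument.
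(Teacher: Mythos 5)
Your plan correctly identifies the shape of the problem --- reduce to a $6g-6$ (real) dimensional box and do Euclidean counting --- and you even flag the dangerous step yourself. That step is, in fact, a genuine gap, and for concrete reasons it is not merely a loose end that ``should follow'' from Wolpert-type inequalities.

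The issue is the claimed uniform, genus-independent bi-Lipschitz comparison between $d_{\T}$ and Fenchel--Nielsen coordinates on a ball of radius $R$ around an $\epsilon$-thick point. First, an $\epsilon$-thick surface of genus $g$ need not admit a pants decomposition with cuff lengths bounded independently of $g$: the Bers constant grows with genus, and there are families of $\epsilon$-thick surfaces (e.g.\ Brooks--Makover type) whose shortest pants decomposition has cuffs of length comparable to $\log g$. Second, even granting a pants decomposition with cuffs in a bounded range, realizing a given FN change by a quasiconformal map with a constant that is uniform over all $3g-3$ curves and over all genera is exactly the heart of Rafi-style product-region estimates, not a soft consequence of $d_\T \ge \tfrac12|\log(\ell/\ell')|$ (which controls only length, not twist, and only in one direction). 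If the bi-Lipschitz constant $L$ degenerates even as slowly as $\log g$, your covering bound becomes $(\mathrm{const}\cdot L(g))^{6g-6}$, and $(\log g)^{6g}$ is \emph{not} $O(d^g)$ for any genus-independent $d$ --- so the genus-independence of $L$ is load-bearing, not cosmetic, and must be proved.

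The paper avoids FN coordinates entirely. It uses the Bers embedding $\beta_S: \T(S)\to Q(S)$, which on a ball of radius $R$ is bi-Lipschitz onto its image with a constant depending only on $R$ (in fact the lower Lipschitz bound is universal); this is Theorem \ref{prop10}, obtained by pulling back the known universal $\D$-case estimates via isometric embeddings. That replaces the problem by a covering-number estimate in the Bers space $Q(S)$, which is an infinite-looking Banach space but is linearly $\alpha$-bi-Lipschitz to a subspace of $\C^n$ with $n\le K(\epsilon)g$ (Theorem \ref{thm23may}); the map is built by sampling $\rho_S^{-2}\varphi$ on a $\delta$-net in $S$, with the cardinality of the net controlled by the triangulation of Lemma \ref{kmlemma1}, and the bi-Lipschitz property coming from the local uniform continuity of $\rho^{-2}\varphi$ for unit-norm quadratic differentials (Lemma \ref{lemma2}). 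All genus-dependence is thereby pushed into the \emph{dimension} $n$, not into the distortion constants. If you want to pursue your FN route, you would need to prove the uniform comparison --- likely invoking Rafi's combinatorial model for the Teichm\"uller metric or equivalent machinery --- and you'd also need to handle the failure of a uniform Bers constant, e.g.\ by using a coarser geometric structure than a pants decomposition. As written, your step (c) is asserted, not proved, and is precisely where a naive argument fails.
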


\begin{remark} In the proof of Theorem \ref{mainthmmg} we only use the second inequality from Theorem \ref{thmteich}. The first inequality will be used in the proof of Theorem \ref{thm-druga} below. 
\end{remark}

To prove Theorem \ref{thmteich} we reduce the problem of estimating the counting number in a highly non-trivial metric space like the Teichm\"{u}ller space to the same problem in the Bers space of holomorphic quadratic differentials. This is done using the Bers embedding theorem and employing techniques similar to those in \cite{F2}.

\subsection{Proof of Theorem \ref{mainthmmg}} With the intermediate results Theorems \ref{upperbound}, \ref{lowerbound} and \ref{thmteich} in hand, the proof of Theorem \ref{mainthmmg} runs as follows:

Assume the results of Theorems \ref{upperbound} and \ref{lowerbound} hold. It is clear that if a set can be covered by a certain number of balls of radius $r$, then it can be covered by the same number of balls of larger radius, that is, the covering number satisfies
\[ \eta_X(E,r) \leq \eta_X(E,s),\]
for $r \geq s$. In particular, if $r\geq r_u$, Theorem \ref{upperbound} implies that  
\[ \eta(\mathcal{M}_{g,\epsilon},r) \leq (c_ug)^{2g}.\]
We also notice that if a certain number of balls of radius $r$ are needed to cover a set $E$, then at least that many balls of smaller radius are needed to cover $E$.
In particular, if $r\leq r_l$, Theorem \ref{lowerbound} implies that
\[ \eta (\mathcal{M}_{g,\epsilon},r) \geq (c_lg)^{2g}.\]

It remains to prove the upper bound on $\eta(\mathcal{M}_{g,\epsilon},r)$ when $r<r_u$ and the lower bound on  $\eta(\mathcal{M}_{g,\epsilon},r)$ when  $r>r_l$.
Suppose first that $r<r_u$. If $B_{\T_{g}}(S,r_u)$ is a ball in Teichm\"{u}ller space of radius $r_u$, where $S$ has injectivity
radius at least $\epsilon$, then Theorem \ref{thmteich} implies that
\[ \eta_{\T_{g}}(B_{\T_{g}}(S,r_u),r) \leq d^{g}_2, \]
where $d_2=d_2(\epsilon,r)$. Projecting to moduli space from Teichm\"{u}ller space does not increase distances, and so an upper bound for $\eta(\mathcal{M}_{g,\epsilon},r)$ is provided by multiplying the minimal number of balls or radius $r_u$ needed to cover $\mathcal{M}_{g,\epsilon}$, multiplied by the number of balls of radius $r$ needed to cover a ball of radius $r_u$. That is, we have
\begin{align*} 
\eta(\mathcal{M}_{g,\epsilon},r) &\leq \eta_{\T_{g}}(B_{\T_{g}}(S,r_u),r) \cdot \eta(\mathcal{M}_{g,\epsilon},r_u) \\
& \leq d^{g}_2 (c_ug)^{2g} <(c_2 g)^{2g} 
\end{align*}
for $r<r_u$ and some constant $c_2=c_2(\epsilon,r)$. 

A similar argument shows that if $r>r_l$ then
\begin{align*}
\eta(\mathcal{M}_{g,\epsilon},r) & \geq \frac{\eta(\mathcal{M}_{g,\epsilon},r_l)}{\eta_{\T_{g}}(B_{\T_{g}}(S,r),r_l)} \\
&\geq d^{-g}_1 (c_lg)^{2g}>(c_1 g)^{2g},
\end{align*}
for some $c_1=c_1(\epsilon,r)>0$. This completes the proof.

\subsection{Thick Riemann surfaces and the Ehrenpreis Conjecture} 

The recently proved Ehrenpreis Conjecture (see \cite{k-m-1}) states that given two closed Riemann surfaces $S$ and $M$ and any $K>1$, one can find finite (unbranched) covers $S_1$ and $M_1$, of $S$ and $M$ respectively, such that there exists a $K$-quasiconformal map  $f:S_1 \to M_1$. One of many equivalent formulations of this result states that given any $\xi>0$ and a closed Riemann surface $S$, there exists a finite cover $S_1$ of $S$, such that $S_1$ admits a  tiling into $\xi$-nearly equilateral right angled hexagons (a $\xi$-nearly equilateral right angled hexagon is a polygon that is $(1+\xi)$-quasi-isometric to the standard equilateral right angled hexagon). Or more generally, $S$ has a finite cover that can be tiled by polygons that are small perturbations of some fixed polygon (which we also call a pattern) that represents a fundamental domain of some  closed Riemann surface or an 
 orbifold.

Given $S$, a pattern and $\xi>0$, the resulting cover $S_1$ will typically have large injectivity radius (this can be enforced by the choice of a pattern). Naturally one can ask if the secret as to why the Ehrenpreis conjecture holds is because  once we fix $\xi>0$ and a pattern then any  closed Riemann surface $X$ of sufficiently large injectivity radius can be tiled into polygons that are $\xi$-close to the given pattern. We formulate the following questions:

\begin{question}\label{q1} Let $S_0$ denote a closed Riemann surface or an orbifold whose Euler characteristic $\chi(S_0)$ satisfies  $|\chi(S_0)| \le 2$. 
Is there a function $I(\delta)>0$, $\delta>0$, such that every closed Riemann surface $X$, whose injectivity radius at every point is greater than $I(\delta)$, is at a distance $\leq \delta$ from a finite cover of $S_0$. 
\end{question}

\begin{question}\label{q2} Let $S_0$ denote a closed Riemann surface or an orbifold whose Euler characteristic $\chi(S_0)$ satisfies  $|\chi(S_0)| \le 2$. Are there constants  $I=I(S_0)>0$ and $d=d(S_0)>0$, such that every closed Riemann surface $X$, whose injectivity radius at every point is greater than $I$, is at a distance $\leq d$ from a finite cover of $S_0$. 
\end{question}

\begin{remark} The assumption on the Euler characteristic is made to ensure that $S_0$ has covers of every genus.
\end{remark}

It is well known in the spectral theory of Riemann surfaces that there are Riemann surfaces of arbitrarily large injectivity radius (about every point), with a uniform lower bound on the first eigenvalue of the Laplacian. More precisely, the following statement holds:

\begin{itemize}
\item There exists a universal constant $q_0>0$ such that given any $I>1$,  there exists a Riemann surface  $X$ whose injectivity radius at every point is greater than $I$, and the first eigenvalue $\lambda(X)$ of the Laplacian is greater than $q_0$.
\end{itemize}

Such surfaces $X$ were constructed in \cite{Brooks}. In fact, we can take $q_0$ to be any number between $0$ and $\frac{1}{4}$.

Now, let $S_0$ be a genus $2$ surface with first eigenvalue $\lambda(S)>0$. If $S$ is a finite cover of $S_0$ then $\lambda(S) \le \lambda(S_0)$. Assuming that the Teichm\"uller distance between $X$ and $S$ is at most $\log K$ implies (see Theorem 14.09.02 in \cite{Buser}) that
$$
\lambda(X)\le K^4 \lambda(S)\le K^4  \lambda(S_0).
$$
Thus if we choose $S_0$ such that $\lambda(S_0)$ is sufficiently small (which is ensured by choosing $S_0$ to have a sufficiently short separating curve), then no cover of $S_0$ can be at a distance at most $\log K$ from $X$.

We see that both questions we stated have negative answers. But using our counting techniques we can prove much more.  
By $\T_{g,\epsilon}$ we denote the $\epsilon$-thick part of the Teichm\"uller space $\T_g$ (that is $\T_{g,\epsilon}$ is the cover of $\mathcal{M}_{g,\epsilon}$)

\begin{theorem}\label{thm-druga} Let $S_0$ be a closed Riemann surface or an orbifold. There exists a universal constant $\delta_0>0$ such that every ball of radius $1$ in every $\T_g$ contains a Riemann surface $X$ that is at least distance $\delta_0$ away from any finite cover of $S_0$.
\end{theorem}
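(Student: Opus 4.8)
The plan is to prove Theorem \ref{thm-druga} by a counting/pigeonhole argument, playing the exponential lower bound for the covering number of the thick part of $\T_g$ against the at-most-exponential number of finite covers of $S_0$ that can possibly sit near a given ball. Fix a ball $B = B_{\T_g}(S,1)$ in $\T_g$. The first step is to arrange that $B$ contains a substantial piece of the thick part: by enlarging to $B_{\T_g}(S', R_0)$ for a suitable $S'\in \T_{g,\epsilon}$ and a universal $R_0$ (or by a separate elementary argument showing every unit ball in $\T_g$ meets $\T_{g,\epsilon}$ in a set of definite size, using that $\T_{g,\epsilon}$ is ``coarsely dense'' in $\T_g$ at the scale of a bounded neighbourhood), we may assume $B$ contains a ball $B_{\T_g}(S_1, \rho)$ with $S_1$ of injectivity radius $\ge \epsilon$ and $\rho>0$ universal. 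The second step is to record, using the first inequality of Theorem \ref{thmteich}, that for a suitably small universal radius $r_0$ we have $\eta_{\T_g}(B_{\T_g}(S_1,\rho), r_0) \ge d_1^{g}$ with $d_1 = d_1(\rho, r_0) > 1$; we choose $r_0$ small enough (using $d_1(R,r)\to\infty$ as $r\to 0$) that $d_1 > 2$, say.

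The third step is the key estimate: the number of distinct points of $\T_g$ that arise as finite covers of $S_0$ and that lie within distance $\delta_0$ (hence within distance $\delta_0 + 2$ of $S$, hence in a ball of universal radius $R_1 = R_1(\delta_0)$) is bounded above by $(c g)^{2g}$ for some constant $c = c(S_0)$ independent of $g$ — or, more crudely, is at most the total number of degree-$n$ covers of $S_0$ for the relevant $n \le C g$, which by the standard subgroup-growth bound for surface groups is at most $(Cg)!^{O(1)} \le (c_3 g)^{Cg}$. The point is that this upper bound is of the form $(c_3 g)^{Kg}$ for fixed $c_3, K$, whereas after subdividing $r_0$ further into a radius $r_1$ chosen so small that the covering number lower bound from Theorem \ref{thmteich} reads $\eta_{\T_g}(B_{\T_g}(S_1,\rho), r_1) \ge (c_l g)^{2g}$ with $c_l$ as large as we like — wait, Theorem \ref{thmteich} only gives a base-$d_1^g$ bound, not a $(cg)^{2g}$ bound, so instead I would combine Theorem \ref{lowerbound} with Theorem \ref{thmteich}: cover $\T_{g,\epsilon}$ efficiently, note one of the covering balls of radius $r_l$ must contain $\ge (c_l g)^{2g}/(\text{number of balls})$... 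Actually the cleanest route is: Theorem \ref{lowerbound} and the argument in the proof of Theorem \ref{mainthmmg} give $\eta_{\T_g}(\T_{g,\epsilon}, r) \ge \eta(\mathcal M_{g,\epsilon}, r) \ge (c_1 g)^{2g}$ for every fixed $r$, with $c_1 = c_1(\epsilon, r)$; but this is a global statement, not a local one. So the honest approach for Theorem \ref{thm-druga} is the local one via the first inequality of Theorem \ref{thmteich} alone, giving base $d_1^g$.

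Granting that, the endgame is: suppose for contradiction that every $X \in B$ lies within $\delta_0$ of a finite cover of $S_0$. Let $r_1 = r_1(S_0)$ be chosen with $d_1(\rho, r_1) > 2c_3$ (possible since $d_1(\rho,r)\to\infty$). Take a maximal $2r_1$-separated set $\{X_i\}$ in $B_{\T_g}(S_1,\rho)$; its cardinality is at least $\eta_{\T_g}(B_{\T_g}(S_1,\rho), r_1) \ge d_1^g > (2c_3)^g$. Each $X_i$ is within $\delta_0$ of some finite cover $Y_i$ of $S_0$, and the $Y_i$ lie in $B_{\T_g}(S, 2 + \delta_0)$, of which there are at most $(c_3 g)^{Kg}$ many — and here I need the separated-set count to beat the cover count, i.e. I need $d_1^g$ to exceed $(c_3 g)^{Kg}$, which it does NOT for fixed $d_1$. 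Hence the main obstacle, and the thing that forces the real structure of the argument, is that a crude subgroup-growth bound $(c_3 g)^{Kg}$ on the number of covers is far too weak; one must instead show that the covers of $S_0$ that are $\delta_0$-close to each other are themselves exponentially (base $c^g$, not $(cg)^{cg}$) few — equivalently, that covers of $S_0$ of a fixed degree $n$ that are within bounded Teichm\"uller distance number at most $c^n$ — OR, dually, prove a version of Theorem \ref{thmteich}'s lower bound with base growing like $g^{g}$ rather than $c^g$, which is false for a single ball. I expect the correct resolution is the first: the set of degree-$n$ covers of a fixed $S_0$ within a bounded ball is controlled by $c(S_0)^n$ because such a cover is determined by a transitive action of $\pi_1(S_0)$ on $n$ points and the bounded-geometry/bounded-diameter constraint forces the Schreier graph to have bounded diameter, limiting the count to $c^n$ with $c = c(S_0, \text{bound})$; combined with $n \le C(\delta_0) g$ this gives at most $c^{Cg}$ covers near $B$, and choosing $r_1$ so that $d_1(\rho, r_1) > c^{C}$ produces the contradiction, proving that some $X_i \in B$ is distance $> \delta_0$ from every finite cover of $S_0$ for a universal $\delta_0$.
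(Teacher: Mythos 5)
Your proof proposal has the same skeleton as the paper's own argument: use the universal lower bound $d_1(R,r)^g$ from Theorem \ref{thmteich} on the covering number of a unit ball to produce many disjoint small balls (equivalently, a separated net), bound the number of finite covers of $S_0$ that can land near the ball, and pigeonhole. But there are two places where the comparison is instructive.

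First, the preliminary step you propose --- enlarging or shifting the ball so that it sits inside the thick part, or showing that $\T_{g,\epsilon}$ is coarsely dense --- is unnecessary. In Theorem \ref{thmteich}, $d_1=d_1(R,r)$ is deliberately independent of $\epsilon$: this traces back to Theorem \ref{prop10}, where the lower bi-Lipschitz constant $b_l$ of the Bers embedding and the inradius $a(R)$ of the embedded image are \emph{universal}, so the lower bound
$\eta_{\T_g}(B_{\T_g}(S,R),r)\ge d_1(R,r)^g$
holds for any basepoint $S\in\T_g$ (every closed hyperbolic surface has positive injectivity radius). The paper applies this directly with $R=1$.

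Second, and more substantively, you correctly identify that the count of covers of $S_0$ is the crux, and you are right to be suspicious of a crude global bound. Here you actually flag something the paper itself glosses over: the proof as written asserts, citing Muller--Puchta, that ``the number of different genus $g$ covers of $S_0$ is at most $Q^g$ for a universal $Q$,'' and then chooses $D>Q$. But the same reference is used in the proof of Theorem \ref{lowerbound} to say that the number of genus $g$ covers of a genus-$2$ surface is \emph{at least} $(Pg)^{2g}$, which is superexponential --- so a global bound of the form $Q^g$ cannot be what Muller--Puchta provide. What the pigeonhole argument actually needs is a \emph{local} count: the number of points of $\T_g$ that are covers of $S_0$ and that lie inside a ball of bounded radius (say $2$) around $S$ is at most $c(S_0)^g$. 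This is indeed true and is, in essence, Lemma \ref{verticeslemma2} combined with the upper bound of Theorem \ref{thmteich}; your Schreier-graph heuristic is a re-derivation of the same statement. So you have landed on the right fix. The remaining wrinkle, which your proposal also does not resolve, is that the local constant $c(S_0)$ depends on $S_0$ (at least through its injectivity radius and Euler characteristic), so choosing $r$ with $d_1(1,r)>c(S_0)$ makes $\delta_0$ depend on $S_0$ --- whereas the theorem as stated claims a universal $\delta_0$. Neither the paper's write-up nor your proposal visibly delivers that final uniformity; establishing it would require a singly-exponential bound on the number of covers in a bounded ball that is uniform in $S_0$, which would need a separate argument.
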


\begin{proof} Let $B_{\T_{g}}(S,1)$ denote the ball of radius $1$ that lives inside of $\T_g$. Then by the lower bound from Theorem \ref{thmteich} we know that given any $D>0$  there exists a small enough $r=r(D)>0$ such that the number of balls of radius $r$ needed to cover the ball  $B_{\T_{g}}(S,1)$ is at least $D^{g}$. From the basic covering theorem it follows that we can pack $D^{g}$ disjoint balls of radius $r/5$ in the big ball $B_{\T_{g}}(S,1)$. On the other hand, by the work of Muller and Puchta \cite{MP}, the number of different genus $g$ covers of $S_0$ is at most $Q^{g}$, for some universal constant $Q>0$. Thus if we choose $D>Q$, then for $\delta_0=r(D)$ there will be some ball from this disjoint collection that does not contain any cover of $S_0$. Since $\delta_0$ does not depend on the genus $g$ we are thus finished (observe that $\delta_0$ is a universal constant, and in particular it does not depend on $S_0$ or the injectivity radius of $S_0$).

\end{proof}

\subsection{Application of Theorem \ref{mainthmmg}: The lower bound for the diameter of $\mathcal{M}_{g,\epsilon}$} Let $\text{diam}(X)$ denote the diameter of a metric space $(X,d)$. 
In \cite{R-T}, the diameter (with respect to the Teichm\"uller metric) of $\text{diam}(\mathcal{M}_{g,\epsilon})$ was estimated. 

\begin{remark} There exists $\epsilon_M>0$ (called the Margulis constant) such that the the moduli space  $\mathcal{M}_{g,\epsilon}$ is connected for every $\epsilon<\epsilon_M$, and we make this assumption in the following discussion.  
\end{remark}

Rafi and Tao in \cite{R-T} have proved that there exists a universal constant $K>0$ such that 
\begin{equation}\label{diam}
\frac{1}{K} \log \frac{g}{\epsilon} \leq \text{diam}(\mathcal{M}_{g,\epsilon}) \leq K \log \frac{g}{\epsilon},
\end{equation}
for every $g$ (moreover, they prove the analogous result for punctured surfaces too).

We apply Theorem \ref{mainthmmg} to obtain  (for a fixed $0<\epsilon<\epsilon_M$) the lower bound in (\ref{diam}) as follows. Let $(X,d)$ denote a metric space. The following inequality holds in any metric space
$$
\eta_X(B_{X}(x,r+p),q) \leq \eta_X(B_{X}(x,r),q) \left( \sup\limits_{y \in X} \eta_X(B_X(y,p+q),q) \right),  
$$
for any $x \in X$, and any $p,q,r>0$. This inequality is proved as follows. Let $D_1,...,D_k$ be any covering of $B_X(x,r)$ by balls $D_i$ of radius $q$. Let $D'_i$ be the ball with the same center as $D_i$ but of radius $p+q$. Then the collection of balls $D'_1,...D'_k$ covers the bigger ball  $B_X(x,r+p)$. Now, we cover each ball $D'_i$ by balls of radius $q$ and taking infimums we finish the proof.

Letting $p=q=1$ in the above inequality, by induction we obtain the inequality 

\begin{equation}\label{diam-1}
\eta_X(X,1)=\eta_X(B_{X}(x,\text{diam}(X)),1) \leq \left( \sup\limits_{y \in X} \eta_X(B_X(y,2),1) \right)^{\text{diam}(X) +1}.
\end{equation}

Letting $X=\mathcal{M}_{g,\epsilon}$, and applying the lower bound in Theorem \ref{mainthmmg} and the upper bound in Theorem \ref{thmteich} we find that
$$
(c_1 g)^{2g} \leq   \eta(\mathcal{M}_{g,\epsilon},1) \leq {d_2}^{g ( \text{diam}(\mathcal{M}_{g,\epsilon})  +1 ) },
$$
and therefore the inequality 
$$
\frac{2(\log g + \log c_1)}{\log d_{2}} -1 \leq  \text{diam}(\mathcal{M}_{g,\epsilon}) 
$$
holds. This completes the proof.

\subsection{Punctured surfaces} By a punctured surface we mean a finite volume hyperbolic Riemann surface with finitely many cusps. The Ehrenpreis conjecture has been proved for closed Riemann surfaces. However, it is still open for punctured surfaces. 
Moreover, the method of proof that was used in the closed case  does not carry over to the punctured case. In fact, the punctured case of this conjecture might be an even more difficult problem. 

It seems reasonable to expect that the results of this paper  hold for punctured surfaces. However,  some of the methods we use in our proofs do not generalize to the punctured case in an obvious way. We mention two examples of this. 
First of all, throughout the paper we equip closed Riemann surfaces with geodesic triangulations that have an upper bound on the degree of a vertex (and on the number of vertices). Such triangulations do not exist for punctured surfaces. The second example  is Theorem \ref{thm23may}. The proof of this theorem relies heavily on the assumption that the Riemann surface in question is closed (an important point in this proof is that there is a lower bound on the injectivity radius). It is not entirely obvious if a satisfactory version of this  theorem holds for punctured surfaces.

\section{The upper bound}

In this section, we will prove Theorem \ref{upperbound}, namely the specific upper bound of $\eta( \mathcal{M}_{g,\epsilon},r_u)$ for some $r_u$ and all large $g$. This will be achieved by showing that every surface $S \in \mathcal{M}_{g,\epsilon}$ has a nice triangulation. If two surfaces have equivalent such triangulations, then the surfaces are not very far apart in moduli space. This reduces the problem to counting the number of equivalent triangulations, for which we can apply a result from \cite{KM}. We now make the notion of triangulations more precise.

\subsection{Triangulations of genus $g$ surfaces}
 
A \emph{genus $g$ triangulation} is a pair $(\tau, \iota)$, where $\tau$ is a connected graph and $\iota: \tau \to S_g$ is an embedding such that every component of $S_g \setminus \iota(\tau)$ is a topological disk that is bounded by three edges from $\iota(\tau)$. 

Two genus $g$ triangulations $(\tau_1,\iota_1)$ and $(\tau_2,\iota_2)$ are called \emph{equivalent} if there is a homeomorphism $h:S_g \to S_g$ such that $h(\iota_1(\tau_1)) = \iota_2(\tau_2)$, where $h$ maps vertices
and edges of $\iota_1(\tau_1)$ to vertices and edges of $\iota(\tau_2)$. We will write $\sim$ for this equivalence relation.
The set of genus $g$ triangulations is denoted by
$\Delta (g)$. Further, the subset $\Delta(k,g) \subset \Delta(g)$ are those triangulations for which the graph $\tau$ satisfies:
\begin{itemize}
\item each vertex of $\tau$ has degree at most $k$,
\item $\tau$ has at most $kg$ vertices and edges.
\end{itemize}

If the context is clear, we will sometimes confuse the graph $\tau$ and its image $\iota(\tau)$ on $S_g$.
The number of equivalence classes of triangulations in $\Delta(k,g)$ is bounded above, as the following result from \cite[Lemma 2.2]{KM} shows.

\begin{theorem}[\cite{KM}]
\label{kmlemma2}
There exists a constant $C>0$ depending only on $k$ such that for large $g$, we have
\[ |  \Delta(k,g)/\sim  | \leq (Cg)^{2g},\]
where $\Delta(k,g) / \sim$ denotes the set of equivalence classes of triangulations in $\Delta(k,g)$.
\end{theorem}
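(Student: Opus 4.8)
The plan is to bound $|\Delta(k,g)/\!\sim|$ by a sequence of reductions ending at the enumeration of one‑vertex maps on $S_g$, which is governed by the Harer–Zagier formula. First I would use Euler characteristics to pin down all the relevant sizes. If $(\tau,\iota)$ is a genus $g$ triangulation then $S_g\setminus\iota(\tau)$ consists of $F$ triangles, so $3F=2E$; combined with $V-E+F=2-2g$ this gives $E=3(V+2g-2)$ and $F=2(V+2g-2)$. The hypothesis $V\le kg$ therefore forces $E,F\le C(k)g$, and since $2E=\sum_v\deg v\le kV$ the degree bound forces $V\ge(6g-O(1))/(k-6)$; hence $V,E,F$ are all of order $g$, with constants depending only on $k$. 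In particular the first Betti number $\beta:=E-V+1=2V+6g-5$ of the graph $\tau$ satisfies $6g\le\beta\le(2k+6)g$ for large $g$, so $\beta/g$ is bounded in terms of $k$.

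Next I would pass to rooted objects. An equivalence class in $\Delta(k,g)$ is precisely an isomorphism class of combinatorial triangular map on $S_g$ (a cellularly embedded graph all of whose complementary faces are triangles); rooting such a map — distinguishing one oriented side of one triangle — kills its automorphism group, so $|\Delta(k,g)/\!\sim|$ is at most the number of rooted triangular maps of genus $g$ with $V\le kg$ vertices (fixing $V$, equivalently $\beta$, costs only a factor $O(g)$). To count these, fix a spanning tree $T$ of the $1$‑skeleton $\tau$ and contract it: the pair (rooted triangular map, spanning tree) maps \emph{injectively} to a pair consisting of (i) a rooted one‑vertex map on $S_g$ with $\beta$ loops — equivalently a genus $g$ chord diagram with $\beta$ chords — together with (ii) an ``un‑contraction decoration'', namely a plane‑tree structure on $V$ vertices shuffled into the cyclic order of the $2\beta$ loop‑ends (from these two data one reconstructs the abstract graph $\tau$, its rotation system, and $T$). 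The number of decorations is at most $C_{V-1}\binom{2\beta+2(V-1)}{2(V-1)}\le 4^{\,\beta+2V}=C(k)^{g}$.

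One‑vertex maps on $S_g$ with $m$ loops are counted by the Harer–Zagier numbers $\varepsilon_g(m)$, and the Harer–Zagier identity
\[
\sum_{h\ge0}\varepsilon_h(m)\,N^{m+1-2h}=(2m-1)!!\sum_{i=0}^{m}2^{\,i}\binom{m}{i}\binom{N}{i+1}
\]
yields, after extracting the coefficient of $N^{m+1-2g}$ on the right, an estimate of the form $\varepsilon_g(m)\le\bigl(C(m/g)\,g\bigr)^{2g}$; since $m=\beta\le(2k+6)g$ this is $\le(C(k)g)^{2g}$. Combining with the bound on the number of rootings ($\le 2\beta$) and on the number of decorations ($\le C(k)^g$), and then summing over the $O(g)$ admissible values of $\beta$, one gets
\[
|\Delta(k,g)/\!\sim|\ \le\ O(g^2)\cdot C(k)^{g}\cdot\bigl(C(k)g\bigr)^{2g}\ \le\ (Cg)^{2g}
\]
for a suitable $C=C(k)$, where the $C(k)^g$ and polynomial factors are absorbed into the base via $C^{g}=(\sqrt C)^{2g}$.

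The main obstacle is the Harer–Zagier estimate $\varepsilon_g(m)\le(C(m/g)g)^{2g}$ in the ``linear regime'' $m\asymp g$ (the regime forced on us by $V\le kg$). Extracting the coefficient of $N^{m+1-2g}$ above produces an \emph{alternating} sum over Stirling numbers of the first kind; bounding it by the sum of absolute values of its terms only gives $\varepsilon_g(m)\le(Cg)^{4g}$, which is too weak. One genuinely has to exploit the cancellation — for instance through the product/recursive form of the Harer–Zagier numbers, or equivalently through the factorial decay of the leading constants $c_g$ in $\varepsilon_g(m)\sim c_g\,4^{m}m^{3g-3/2}$ (this decay is what turns the naive $g^{3g}$, or the cruder $g^{4g}$, into $g^{2g}$). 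This uniform‑in‑$(m,g)$ enumeration of one‑vertex maps is the technical core; everything else (the Euler–characteristic bookkeeping, the rooting trick, and the spanning‑tree contraction) is routine.
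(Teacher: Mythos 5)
The statement is quoted from \cite[Lemma~2.2]{KM}; the present paper supplies no proof of its own, so there is no ``paper's proof'' to compare against and I assess the proposal on its own merits.

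Your plan --- Euler--characteristic bookkeeping, rooting to kill automorphisms, spanning--tree contraction to a rooted one--vertex map, and the Harer--Zagier count --- is a coherent route, and you correctly identify that everything hinges on a uniform bound for $\varepsilon_g(m)$ in the regime $m\asymp g$. But you then leave that bound unproved, explicitly flagging it as a gap, so as submitted the argument is incomplete. Moreover your explanation of \emph{why} it is hard is misplaced. Writing $c(M,K)$ for the unsigned Stirling number of the first kind, one has $c(M,M-d)=e_d(0,1,\dots,M-1)$, and the elementary inequality $e_d\le p_1^d/d!$ gives $c(M,M-d)\le \tbinom{M}{2}^{d}/d!\le M^{2d}/(2^d d!)$ --- crucially with the $1/d!$ that the crude bound $M^{2d}$ omits. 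Inserting this into the coefficient extraction from the Harer--Zagier identity (with $M=m-j+1$ and $d=2g-j$, $0\le j\le 2g$), a routine Stirling computation shows that \emph{every individual term} of the alternating sum is already $\le (Cg)^{2g}$ with $C=C(m/g)$; summing the $2g+1$ terms in absolute value then gives $(C'g)^{2g}$. No cancellation is needed, and your claimed $(Cg)^{4g}$ ceiling for the triangle--inequality bound is an artifact of the weaker Stirling estimate. So the gap you flag is real (you did not carry it out) but it is closable by the elementary route you dismissed.

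Two smaller points. First, $2E=\sum_v\deg v\le kV$ together with $E=3(V+2g-2)$ gives $V\ge (12g-12)/(k-6)$, not $(6g-O(1))/(k-6)$; this does not affect anything downstream. Second, the ``un--contraction decoration'' injectivity and its $C(k)^g$ bound are asserted rather than argued; they are plausible and at worst contribute a harmless $C(k)^g=(\sqrt{C(k)})^{2g}$, but a careful write--up should verify that the pair (one--vertex map, shuffled plane tree) really does reconstruct the rotation system of the original map.
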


We say that a Riemann surface is $\epsilon$-thick if the shortest closed geodesic has length at least $\epsilon$. Every thick Riemann surface has a good triangulation in the sense of the following lemma.

\begin{lemma}[\cite{KM}]
\label{kmlemma1}
Let $S$ be an $\epsilon$-thick Riemann surface of genus $g \geq 2$. Then there exists $k=k(\epsilon)>0$, and a triangulation $(\tau, \iota) \in \Delta(k,g)$
that embeds in $S$ such that
every edge of $\iota(\tau)$ is a geodesic arc of length at most $\epsilon$ and at least $\epsilon /2$.
\end{lemma}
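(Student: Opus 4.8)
The overall strategy is to build the triangulation directly from a maximal packing of geodesic arcs, using the lower bound on the injectivity radius to control the combinatorics. First I would fix the constant $\epsilon$ and work at scale $\epsilon/2$. Since $S$ is $\epsilon$-thick, every point has an embedded hyperbolic disk of radius $\epsilon/2$ about it, so balls of that radius are genuinely isometric to hyperbolic disks and metric estimates are uniform. I would choose a maximal $(\epsilon/2)$-separated set of points $V = \{v_1,\dots,v_n\}$ in $S$; maximality gives that the $\epsilon/2$-balls about the $v_i$ cover $S$, while separation gives that the $\epsilon/4$-balls are pairwise disjoint and embedded. Comparing the total area $2\pi(2g-2)$ of $S$ with the areas of these disjoint disks immediately bounds $n$ linearly in $g$, say $n \le K(\epsilon) g$; this is where the first bullet of $\Delta(k,g)$ (bounded number of vertices) comes from.

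Next I would connect nearby points of $V$ by geodesic arcs: join $v_i$ and $v_j$ by the (unique, once it is short enough) geodesic segment whenever $d_S(v_i,v_j) < \epsilon$, say. One has to check this produces a connected graph $\tau$ whose complementary regions are disks; connectivity follows because the $\epsilon/2$-balls cover $S$, so any two adjacent cover elements give an edge, and $S$ is connected. To control vertex degree, note that all neighbours of $v_i$ lie in the embedded ball $B(v_i,\epsilon)$ and are $\epsilon/2$-separated, so a packing-in-a-disk count bounds the degree by some $k_0(\epsilon)$. The arcs themselves have length at most $\epsilon$ by construction; to get the lower bound $\epsilon/2$ on edge lengths I would, if necessary, first pass to a subset of $V$ that is exactly $(\epsilon/2)$-separated *and* such that distinct points are at distance $\ge \epsilon/2$ — which is automatic from the separation — so every edge already has length in $[\epsilon/2,\epsilon]$.

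The one genuinely delicate point — and the step I expect to be the main obstacle — is passing from this geodesic graph to an honest *triangulation*, i.e. ensuring that edges meet only at vertices (no transverse crossings in the interior) and that every complementary face is a triangle bounded by exactly three edges. Transverse crossings can be removed by a standard perturbation/subdivision argument: since there are at most $O(g)$ edges each of bounded length, the number of crossings is $O(g)$, and one can either perturb the endpoints slightly or, cleaner, first refine $V$ to make the graph embedded, at the cost of only increasing $k$ and $n$ by a bounded factor and shrinking the scale by a bounded factor (so lengths stay in a fixed interval $[\epsilon',\epsilon]$, and one then rescales the statement — or rather, one adjusts $\epsilon$ at the start so that the final lengths land in $[\epsilon/2,\epsilon]$). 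Once the graph is embedded, the faces are disks (by the covering property and a Euler-characteristic / nerve argument), and any face with more than three sides can be triangulated by adding geodesic diagonals; each diagonal of a small polygon is again a short geodesic arc, so the length bounds and the degree/vertex bounds are preserved up to constants depending only on $\epsilon$. Absorbing all these bounded multiplicative losses into a single $k = k(\epsilon)$ yields the claimed triangulation $(\tau,\iota) \in \Delta(k,g)$. For the actual write-up I would cite \cite{KM} for the existence of such a triangulation, since this is exactly their Lemma 2.2 construction, and only sketch the packing argument that produces the linear-in-$g$ bounds on vertices and the $\epsilon$-dependent bound on vertex degree.
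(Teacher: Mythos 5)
The paper itself offers no proof of this lemma; it is cited verbatim from~\cite{KM} (Lemma~2.1 there) with only the remark that the lower bound on edge lengths follows from that proof without being explicitly stated. So there is no ``paper's proof'' to compare against; what I can do is assess whether your sketch would actually close the gap.

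Your opening moves are the right ones and essentially match the standard construction: take a maximal $(\epsilon/2)$-separated net $V$, use the area of $S$ (with disjoint embedded $(\epsilon/4)$-disks, available because of $\epsilon$-thickness) to get $|V|\le K(\epsilon)\,g$, and use a packing count inside an embedded $\epsilon$-ball to bound vertex degree by $k_0(\epsilon)$. That part is fine and is exactly where the $\Delta(k,g)$ bounds come from.

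The gap is in the step you yourself flag as delicate: turning a Rips-type graph (``join $v_i,v_j$ whenever $d(v_i,v_j)<\epsilon$'') into an embedded triangulation with the stated metric bounds. Two things go wrong. First, nothing forces the one-skeleton of such a Rips graph to be embedded, and ``refining $V$'' does not obviously remove crossings — adding more net points can only create more edges and more crossings, and perturbing endpoints destroys the clean length window. Second, and more decisively, once you have an embedded graph whose complementary faces are polygons with more than three sides, triangulating them by geodesic diagonals keeps the \emph{lower} bound (diagonals still join points of $V$, hence have length $\ge\epsilon/2$) but can blow the \emph{upper} bound $\le\epsilon$: a face can have diameter much larger than $\epsilon$ even when all its boundary edges are short. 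Your appeal to ``bounded multiplicative losses absorbed into $k$'' would be acceptable if the lemma were stated with an interval $[c_1(\epsilon),c_2(\epsilon)]$, but as written it asserts the specific window $[\epsilon/2,\epsilon]$, and your argument as given does not deliver that.

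The clean way to avoid both problems — and what the KM construction effectively does — is to dualize the Voronoi decomposition of the $(\epsilon/2)$-separated net rather than build a Rips graph. Voronoi cells for a maximal $(\epsilon/2)$-net in an $\epsilon$-thick surface are convex, embedded, and of diameter at most $\epsilon$; the Delaunay dual is therefore an embedded geodesic triangulation with no crossings to repair and no extra faces to subdivide, and a Delaunay edge joins two net points whose cells meet, forcing its length to lie in $[\epsilon/2,\epsilon]$ exactly. Degree and vertex counts are bounded just as in your packing argument. If you want a self-contained proof rather than a citation, that is the route to take; your current sketch would need to be replaced or substantially repaired at the ``embeddedness and face-triangulation'' step.
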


This lemma is proved in \cite[Lemma 2.1]{KM}, although we remark that the lower bound for the lengths of the edges of $\tau$ is proved but not explicitly stated.

\subsection{Proof of Theorem \ref{upperbound}}

We start with the following lemma on quasiconformal mappings between hyperbolic triangles proved in \cite[Lemma 3.1]{Bishop}.

\begin{lemma}[\cite{Bishop}]
\label{trianglelemma}
Let $T_1,T_2$ be hyperbolic triangles in $\D$ with angles $(\alpha_i,\beta_i,\gamma_i)$ for $i=1,2$ and opposite side lengths $(a_i,b_i,c_i)$ for $i=1,2$. Suppose that there exists $\theta >0$ such that all these angles are at least $\theta$. Suppose further that
\[ \max \left \{ \left | \log \frac{a_1}{a_2} \right |, \left | \log \frac{b_1}{b_2} \right |, \left | \log \frac{c_1}{c_2} \right | \right \} \leq A.\]
Then there exists a constant $K_0=K_0(\theta, A) \geq 1$ and a $K_0$-quasiconformal map $f:T_1 \to T_2$ which maps each vertex to the corresponding vertex and which is affine on each edge of $T_1$ with respect to the hyperbolic metric.
\end{lemma}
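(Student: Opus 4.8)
The natural approach is to build $f$ by hand as a \emph{cone map} from a well-chosen interior point of $T_1$ to the corresponding point of $T_2$, with the prescribed boundary behaviour, and then to bound its quasiconformal dilatation by a quantity depending only on $\theta$ and $A$. First I would normalize: since isometries of $\D$ are $1$-quasiconformal and preserve all the hypotheses and the conclusion, I may pre- and post-compose so that the incenter $o_i$ of $T_i$ sits at $0$. The angle form of the hyperbolic law of cosines, $\cosh(\text{side opposite }\gamma)=(\cos\gamma+\cos\alpha\cos\beta)/(\sin\alpha\sin\beta)$, shows that when all angles lie in $[\theta,\pi-2\theta]$ every side length of $T_1$ and of $T_2$ is bounded above by some $L=L(\theta)$; combined with the hypothesis on the side ratios, $\diam(T_1)$ and $\diam(T_2)$ differ by a factor at most $e^{A}$, and both are $\le e^{A}L$. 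No positive lower bound on the individual side lengths is available — a triangle with angles $\ge\theta$ can be arbitrarily small — and this is exactly why a two-regime analysis is needed below. Finally, a hyperbolic triangle is convex, hence star-shaped about its incenter, so the map $\sigma_i\colon S^{1}\to\partial T_i$ sending a unit direction $u$ to the unique point of $\partial T_i$ on the geodesic ray from $0$ in direction $u$ is a homeomorphism.

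For the construction, work in geodesic polar coordinates $(\rho,u)$ about $0$ and write $T_i=\{\exp_{0}(\rho u):0\le\rho\le\rho_i(u),\ u\in S^{1}\}$ for a positive continuous function $\rho_i$. Let $\phi\colon\partial T_1\to\partial T_2$ be the homeomorphism which on each edge is affine with respect to hyperbolic arclength and carries vertices to the corresponding vertices; this is precisely the boundary map required by the lemma, and it is well defined because it matches up the three edges in the correct cyclic order. Put $\bar\phi=\sigma_2^{-1}\circ\phi\circ\sigma_1\colon S^{1}\to S^{1}$, an orientation-preserving circle homeomorphism, and set
\[
f\bigl(\exp_{0}(\rho u)\bigr)\;=\;\exp_{0}\!\bigl(\lambda(u)\,\rho\,\bar\phi(u)\bigr),\qquad \lambda(u):=\rho_2(\bar\phi(u))/\rho_1(u),\quad 0\le\rho\le\rho_1(u),\ u\in S^{1}.
\]
Then $f$ is a homeomorphism of $T_1$ onto $T_2$ fixing $0$, it restricts to $\phi$ on $\partial T_1$ (so it is affine on edges and sends vertices to the corresponding vertices), and it is smooth on each of the three closed sectors lying over the three edges — hence off the single point $0$, which is removable for quasiconformality.

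It remains to bound the dilatation of $f$ by $K_0(\theta,A)$. In the polar charts $f$ has the form $(\rho,u)\mapsto(\lambda(u)\rho,\bar\phi(u))$, so its pointwise dilatation is controlled by upper and lower bounds for $\lambda$ and for $\bar\phi'$, and upper bounds for $(\log\lambda)'$ and for $\rho_i'/\rho_i$ — all of which must be uniform in $\theta,A$. I would obtain these in two regimes, with a threshold $\delta_0=\delta_0(\theta)$. \emph{Regime 1: $\diam(T_1)\ge\delta_0$.} Then both triangles lie in the compact family of hyperbolic triangles with all angles $\ge\theta$ and diameter in $[\delta_0,e^{A}L]$, subject to the closed side-ratio constraint; were there no uniform dilatation bound, a sequence of such pairs would converge to a non-degenerate limit pair, for which the cone map is a fixed smooth homeomorphism of finite dilatation, and the cone maps of the sequence would converge to it locally uniformly in $C^{1}$ away from $0$ — forcing their dilatations to be eventually bounded, a contradiction. \emph{Regime 2: $\diam(T_1)<\delta_0$, hence $\diam(T_2)<e^{A}\delta_0$.} Here I use that for a hyperbolic triangle of diameter $\le\delta$ the chart $\exp_{0}$ is $(1+O(\delta))$-bi-Lipschitz onto the Euclidean triangle spanned by the same vertices in the tangent plane (which has the same angles), and that hyperbolic arclength along a short edge agrees with Euclidean arclength up to a factor $1+O(\delta)$. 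Transporting $f$ through $\exp_{0}$ thus presents it, up to $(1+O(\delta_0))$-bi-Lipschitz errors, as a Euclidean cone map from the incenter with edge-affine boundary between two Euclidean triangles whose angles are $\ge\theta$ — so, by the law of sines, with all sides comparable within $1/\sin\theta$ — and whose sides are comparable within $e^{A}$. For such a Euclidean cone map an elementary, scale-invariant computation in polar coordinates (the incenter is deep inside, the radial boundary functions have controlled ratios and logarithmic derivatives, the angular extents of edges at the incenter are $\ge c(\theta)$) bounds the dilatation by an explicit $K_0''(\theta,A)$. Fixing $\delta_0$ a suitably small constant depending on $\theta$ and taking $K_0=\max\{K_0',K_0''\}$ — where $K_0'$ is the bound from Regime 1 with this $\delta_0$ — finishes the proof.

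The main obstacle is precisely the uniformity of the bounds on $\lambda$ and $\bar\phi'$ in the degenerating limit — some angle approaching $\pi-2\theta$, a side shrinking, the triangle collapsing to a point: one must check that the side-ratio hypothesis genuinely prevents $\lambda=\rho_2\circ\bar\phi/\rho_1$ and $\bar\phi'$ from blowing up. The two-regime device is what tames this, by reducing the small-triangle case to the scale-invariant Euclidean statement, which is then elementary. The remaining ingredients — star-shapedness about the incenter, the incenter lying deep inside at a rate controlled by $\theta$, the polar-coordinate dilatation formula, the $(1+O(\delta))$ Euclidean/hyperbolic comparison with explicit rate — are standard, though somewhat tedious to write out in full.
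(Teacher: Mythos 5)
The paper does not prove this lemma; it is quoted verbatim from \cite[Lemma 3.1]{Bishop} and used as a black box, so there is no in-paper argument to compare against. As a self-contained proof, your sketch is essentially sound: the cone map $\exp_{0}(\rho u)\mapsto\exp_{0}(\lambda(u)\rho\,\bar\phi(u))$ from the (normalized) incenter does restrict to the required edge-affine boundary homeomorphism, and the two-regime split is the right device for making the dilatation bound uniform despite there being no lower bound on the side lengths. Two points should be tightened before this counts as a complete proof. First, in Regime~2 the Euclidean triangle in $T_{0}\D$ spanned by $\exp_{0}^{-1}$ of the vertices does \emph{not} have the same angles as the hyperbolic triangle -- by Gauss--Bonnet the angle sums differ by the hyperbolic area -- but since $\diam(T_{1})<\delta_{0}$ the defect is $O(\delta_{0}^{2})$ and the Euclidean angles remain $\ge\theta/2$ once $\delta_{0}=\delta_{0}(\theta)$ is small, so the claim should read ``nearly the same angles.'' Second, the Regime~1 contradiction argument silently needs the normalized constraint set (angles $\ge\theta$, $\diam\in[\delta_{0},e^{A}L(\theta)]$, side ratios $\le e^{A}$, incenter at $0$, one vertex direction fixed) to be compact, which in turn requires a positive lower bound on \emph{every} individual side, not just on the diameter; this does hold, by the hyperbolic law of sines $\sinh a_{i}/\sin\alpha_{i}=\sinh c_{i}/\sin\gamma_{i}$ together with the $\theta$-bound on angles and the $L(\theta)$-bound on sides, exactly the mechanism you invoke for the Euclidean case in Regime~2, but it should be said. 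With those repairs -- and the remaining ``tedious but standard'' polar-coordinate dilatation estimates actually carried out -- the argument closes and would serve as a correct proof of Bishop's lemma.
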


Using this lemma, we next show that equivalent triangulations in $\Delta(k,g)$ on different Riemann surfaces yield a quasiconformal mapping between the surfaces.

\begin{lemma}
\label{qcext}
Let $S_1,S_2$ be $\epsilon$-thick surfaces with triangulations $(\tau_1, \iota_1),(\tau_2, \iota_2) \in \Delta(k,g)$ and such that each edge of $\iota_1(\tau_1)$ and $\iota_2(\tau_2)$ is a geodesic arc and has length at least $\epsilon / 2$ and at most $\epsilon$.
If these two triangulations are equivalent, then there is a constant $K_0 \geq 1$ depending only on $\epsilon$ such that there exists a $K_0$-quasiconformal map $f:S_1 \to S_2$ such that $f(\iota_1(\tau_1)) = \iota_2(\tau_2)$.
\end{lemma}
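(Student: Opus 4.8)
The plan is to build the quasiconformal map $f:S_1\to S_2$ triangle by triangle, using the combinatorial identification supplied by the equivalence of the two triangulations together with Lemma \ref{trianglelemma} to interpolate geometrically inside each triangle. First I would fix the homeomorphism $h:S_g\to S_g$ witnessing the equivalence $(\tau_1,\iota_1)\sim(\tau_2,\iota_2)$; composing with the markings, this gives a combinatorial correspondence between the triangles of $\iota_1(\tau_1)$ and those of $\iota_2(\tau_2)$, and a matching of vertices and edges. The key geometric input is that all edges on both surfaces are geodesic arcs of length in $[\epsilon/2,\epsilon]$, so each triangle is a geodesic triangle whose three side lengths lie in this fixed interval; hence $\bigl|\log(a_1/a_2)\bigr|$, etc., are bounded by $A=A(\epsilon):=\log 2$ for corresponding triangles.

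The second step is to control the angles. I need a lower bound $\theta=\theta(\epsilon)>0$ on all angles of all these geodesic triangles, so that Lemma \ref{trianglelemma} applies uniformly. This follows from the hyperbolic law of cosines: a hyperbolic triangle with all sides in $[\epsilon/2,\epsilon]$ has all angles bounded below by a constant depending only on $\epsilon$ (the side lengths are bounded above by $\epsilon$, which keeps the triangle from degenerating, and bounded below by $\epsilon/2$, which bounds the angles away from $0$ and from $\pi$). With $\theta$ and $A$ in hand, Lemma \ref{trianglelemma} produces, for each pair of corresponding triangles $T^{(1)}_j\subset S_1$ and $T^{(2)}_j\subset S_2$, a $K_0$-quasiconformal map $f_j:T^{(1)}_j\to T^{(2)}_j$ with $K_0=K_0(\theta,A)=K_0(\epsilon)$, carrying vertices to corresponding vertices and affine (in the hyperbolic metric) on each edge.

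The third step is to glue the maps $f_j$ into a global map $f$. Two triangles of $\iota_1(\tau_1)$ sharing an edge $e$ are sent by $h$ to two triangles of $\iota_2(\tau_2)$ sharing the corresponding edge $e'$; since each $f_j$ restricted to an edge is the hyperbolic-affine map determined by where it sends the two endpoints, and the endpoints are matched consistently by the vertex correspondence, the two triangle maps agree on the shared edge. Hence the $f_j$ patch together to a well-defined homeomorphism $f:S_1\to S_2$ with $f(\iota_1(\tau_1))=\iota_2(\tau_2)$. Quasiconformality: $f$ is $K_0$-quasiconformal on the interior of each triangle, and the edges form a finite union of analytic arcs, which is a removable set for quasiconformality (a set of measure zero that is locally a finite union of rectifiable arcs does not obstruct the quasiconformality of a homeomorphism that is $K_0$-qc off it); so $f$ is globally $K_0$-quasiconformal. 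One should also note the vertices themselves are just finitely many points, hence also removable.

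The main obstacle I expect is the gluing/removability argument — making sure that the hyperbolic-affine boundary parametrizations genuinely match across shared edges (this needs the vertex correspondence to be consistent around each vertex, which comes from $h$ being a homeomorphism, so the cyclic orders of triangles around corresponding vertices agree), and then citing or proving the removability of the $1$-skeleton for quasiconformality. The angle lower bound is a routine but necessary hyperbolic-trigonometry computation, and the rest is essentially bookkeeping with the combinatorial equivalence; the uniformity of $K_0$ in the genus is automatic since $\theta$ and $A$ depend only on $\epsilon$.
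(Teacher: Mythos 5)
Your argument is essentially identical to the paper's: apply Lemma \ref{trianglelemma} triangle by triangle using the combinatorial matching furnished by $h$, note that side lengths in $[\epsilon/2,\epsilon]$ force a uniform angle lower bound $\theta(\epsilon)$, and glue along edges where the affine boundary maps agree. You fill in the removability step that the paper leaves implicit, but the structure and the key lemma are the same.
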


\begin{proof}
Let $S_1,S_2 \in \mathcal{M}_{g,\epsilon}$ with equivalent triangulations $(\tau_1, \iota_1),(\tau_2, \iota_2) \in \Delta(k,g)$. This means there is a homeomorphism $h:S_1 \to S_2$ which maps vertices and edges of $\iota_1(\tau_1)$ to vertices and edges of $\iota_2(\tau_2)$. 
For any pair of hyperbolic triangles $T_1 \in \iota_1(\tau_1)$ and $T_2 \in \iota_2(\tau_2)$ with $h(T_1)=T_2$, 
replace $h$ by $f:T_1 \to T_2$, where $f$ is the quasiconformal mapping arising from Lemma \ref{trianglelemma}.

By definition, $T_1$ and $T_2$ both have side lengths between $\epsilon / 2$ and $\epsilon$. Since the angles of a hyperbolic triangle are completely determined by the lengths, this implies that there exists $\theta = \theta(\epsilon)>0$ such that all angles in $T_1$ and $T_2$ are at least $\theta$.
Therefore $f$ is $K_0$-quasiconformal for some $K_0$ depending only on $\epsilon$.
Define $f$ in this way for each pair of triangles related by $h$ in $\iota_1(\tau_1)$ and $\iota_2(\tau_2)$. Since $f$ is affine on each edge of $\iota_1(\tau_1)$, $f$ is well defined on all of $S_1$ and $K_0$-quasiconformal.
This proves the lemma.
\end{proof}

Given this lemma, the proof of the upper bound runs as follows.

\begin{proof}[Proof of Theorem \ref{upperbound}]
By Lemma \ref{kmlemma1}, every $S \in \mathcal{M}_{g,\epsilon}$ has a triangulation $(\iota,\tau) \in \Delta(k,g)$ for some $k=k(\epsilon)$, where each edge of $\iota(\tau)$ has length at least $\epsilon/2$ and at most $\epsilon$. 
For each equivalence class of triangulations that arises in this way, choose a representative $S_i \in \mathcal{M}_{g,\epsilon}$.
Then by Lemma \ref{qcext} and recalling the definition of the Teichm\"{u}ller metric, if $K>K_0$ the collection $B_{\mathcal{M}_g}( S_i, \log \sqrt{K})$ covers $\mathcal{M}_{g,\epsilon}$ where $K_0$, and hence $K$, depends only on $\epsilon$.

Now Theorem \ref{kmlemma2} implies there exists a constant $C=C(\epsilon)$ such that for all large $g$, the number of equivalence classes of triangulations in $\Delta(k,g)$ is at most $(Cg)^{2g}$. Therefore the covering number satisfies
\[ \eta \left ( \mathcal{M}_{g,\epsilon},  \log \sqrt{K} \right ) \leq (Cg)^{2g}.\]
This proves Theorem \ref{upperbound} with $r_u = \log \sqrt{K}$ and $c_u = C$.
\end{proof}

\section{The lower bound}

In this section, we will prove Theorem \ref{lowerbound}. The proof of this inequality is more involved than for the upper bound, and so we outline it here, before proving it in detail.

\begin{itemize}
\item Fix a base Riemann surface $S_0$ of genus $2$. Denote by $\epsilon$ the injectivity radius of $S_0$. Once and for all fix  a triangulation $\tau_0$ of $S_0$ with vertices $v_1,\ldots,v_n$, whose edges are geodesic arcs of length at most $\epsilon /10$ 
(see Lemma \ref{kmlemma1} above). We let $\chi=\chi(\tau_0)=\chi(S_0)>0$ denote a number such that every edge of $\tau_0$ is longer than $\chi$.
\item Consider the genus $g$ covers of $S_0$, and we remark that all covers in this section are unbranched. By a result of Muller and Puchta \cite{MP},
there exists a constant $P>0$ such that the number of such covers is at least $(Pg)^{2g}$ for large $g$.
\item Fix a cover $S_1 \in \mathcal{M}_{g,\epsilon}$, and let $\tau_1$ be a lift of the triangulation $\tau_0$ to $S_1$. For $i=1,\ldots,n$, label the vertices which are pre-images of $v_i$ by $\{w_{i,j}^1\}$ for $j=1,\ldots , (g-1)$. Denote the set of pre-images of $v_i$ in $S_1$ by $W^{S_1}_i$. Each vertex of $\tau_0$ has $g-1$ pre-images since the degree of the cover is $g-1$.
\item We show that if $S_2$ is another such cover, with the corresponding triangulation $\tau_2$, and $f:S_1 \to S_2$ is a quasiconformal map with small enough maximal dilatation that maps $W^{S_1}_i$ to $W^{S_2}_i$ for $i=1,\ldots,n$, then $S_1$ and $S_2$ are actually conformally equivalent.
\item There then exists a constant $K_2=K_2(S_0)$ such that if $S\in \mathcal{M}_{g,\epsilon}$ is any cover of $S_0$, then the ball
$B_{\mathcal{M}_g}(S,\log \sqrt{K_2})$ contains at most $D^g$ surfaces that are covers of $S_0$, where $D$ is a constant depending only on $S_0$.
\item Combining this with the Muller and Puchta estimate gives the lower bound.
\end{itemize}

\subsection{Covers of genus $2$ surfaces and lifts of triangulations}

In this subsection, we set some notation for the proof of the lower bound.

Let $S_0$ be an $\epsilon$-thick Riemann surface of genus $2$. We will consider the unbranched degree $d$ covers of $S_0$. By the Riemann-Hurwitz Theorem, if $f:S \to S_0$ is an unbranched covering of degree $d$, then the genus of $S$ satisfies
\[ g(S) = d+1.\]
In particular, the covers of $S_0$ which are of genus $g \geq 3$ correspond to degree $g-1$ covers.

We fix a triangulation $\tau_0$ of $S_0$ with vertices $v_1,\ldots,v_n$, and whose edges are geodesics arcs of length at most $\epsilon/10$ (for example, $\tau_0$ can be obtained by repeatedly applying barycentric subdivision of any given triangulation of $S_0$). 
By $\chi>0$ we denote a number such that each edge of $\tau_0$ is longer than $\chi$.

Let $S \in \mathcal{M}_{g,\epsilon}$ be a genus $g$ cover of $S_0$, so that $f:S \to S_0$ is a degree $g-1$ map. The triangulation $\tau_0$ lifts to a triangulation $\tau_S$ of $S$. Since the cover has degree $g-1$, the triangulation $\tau_S$ has $(g-1)n$ vertices. 

We put the vertices of $\tau_S$ into groups according to which vertex of $\tau_0$ they project to. More precisely, for $i=1,\ldots,n$, we write $W^S_i$ for the subset of vertices of $\tau_S$ given by
\[ W^S_i = f^{-1}(v_i).\]
Note that $W^S_i$ contains $g-1$ vertices of $\tau_S$. 

%\begin{figure}
%\includegraphics[height=80mm]{FKM_2.jpg}
%\caption{A genus $g$ cover $S$ of $S_0$ with the set $W_i^S$.\label{fig2}}
%\end{figure}

\begin{lemma}
\label{verticeslemma1}
There exists $K_1=K_1(S_0) \geq 1$ such that the following holds. If $S_1,S_2 \in \mathcal{M}_{g,\epsilon}$ are two genus $g$ covers of $S_0$, $f:S_1 \to S_2$ is a $K$-quasiconformal map with $K \leq K_1$ and $f$ maps $W^{S_1}_i$ to $W^{S_2}_i$ for $i=1,\ldots,n$, then $S_1$ and $S_2$ are conformally equivalent.
\end{lemma}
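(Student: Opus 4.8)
The plan is to show that a $K$-quasiconformal map $f:S_1\to S_2$ with $K$ close to $1$ that respects the vertex sets $W^{S_1}_i$ can be promoted, using the large-scale rigidity of the covering structure, to a conformal map. The idea is that because the edges of $\tau_0$ are long (length $>\chi$) but the triangulation is fine ($<\epsilon/10$) and everything lifts isometrically to the covers, the combinatorics of $\tau_{S_1}$ is forced: knowing where the vertices of each fiber $W^{S_1}_i$ go (as a set) pins down, for $K$ small enough, which vertex goes to which vertex and which edge goes to which edge. Concretely, I would first observe that each $S_j$ carries the lifted triangulation $\tau_{S_j}$ all of whose edges are geodesics of length in $(\chi,\epsilon/10)$, and that two distinct vertices of $\tau_{S_j}$ in a common fiber $W^{S_j}_i$ are at distance bounded below by some $\delta=\delta(S_0)>0$ (they project to the same point $v_i$, so a short arc between them would project to a short essential loop, contradicting $\epsilon$-thickness, since $S_0$ — and hence every cover — is $\epsilon$-thick). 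Similarly adjacent vertices are at distance $>\chi$. A $K$-quasiconformal self-map of a thick hyperbolic surface moves points a bounded amount depending only on $K$ (and $\epsilon$); choosing $K_1$ so that this displacement is smaller than $\delta/10$ and $\chi/10$, the map $f$ must send each vertex of $\tau_{S_1}$ to the \emph{unique} nearby vertex of $\tau_{S_2}$ in the prescribed fiber, and must send edges to the combinatorially forced edges. Thus $f$ is homotopic to a simplicial isomorphism $h:\tau_{S_1}\to\tau_{S_2}$ commuting with the projections to $\tau_0$.

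Next I would upgrade this combinatorial isomorphism to a conformal equivalence. The simplicial isomorphism $h$ covers the identity on $S_0$ up to the deck structure: more precisely, $h$ conjugates the monodromy representation $\pi_1(S_0)\to \mathrm{Sym}(W^{S_1}_\bullet)$ of the cover $S_1\to S_0$ to that of $S_2\to S_0$, because it respects every fiber $W_i$ and every edge-incidence, and the monodromy is determined by how the fibers are permuted as one traverses the edges of $\tau_0$ (the $1$-skeleton carries all of $\pi_1$). Hence $S_1\to S_0$ and $S_2\to S_0$ are isomorphic covers of $S_0$; since $S_0$ is a fixed Riemann surface, an isomorphism of covers is automatically holomorphic, so $S_1$ and $S_2$ are conformally equivalent. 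Finally I would set $K_1=K_1(S_0)$ to be the constant produced in the displacement estimate and note it depends only on $S_0$ (through $\epsilon$, $\chi$, and the geometry of $\tau_0$), as required.

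The main obstacle I expect is the displacement estimate that forces $f$ to match vertices and edges combinatorially: one needs a clean statement that a $K$-quasiconformal map between $\epsilon$-thick closed hyperbolic surfaces moves each point by at most a distance $\rho(K,\epsilon)$ with $\rho(K,\epsilon)\to 0$ as $K\to 1$. This follows from standard quasiconformal theory — lift to $\mathbb{D}$, use that a $K$-quasiconformal map of $\mathbb{D}$ fixing $\partial\mathbb{D}$ pointwise (after normalizing by the appropriate isometry coming from the homotopy) is uniformly close to the identity, together with the Margulis/thickness bound converting small boundary distortion into small interior displacement — but it must be set up so that the homotopy class of $f$ is the one respecting the vertex labelling; here the hypothesis that $f(W^{S_1}_i)=W^{S_2}_i$ as sets, combined with the lower bound $\delta$ on intra-fiber distances, is exactly what rules out $f$ being homotopic to a "twisted" map and makes the nearest-vertex assignment well defined and simplicial. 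A secondary technical point is verifying that matching the $1$-skeleton really does identify the two covers (equivalently, that $\tau_0^{(1)}$ carries $\pi_1(S_0)$), which is immediate since $S_0\setminus\tau_0^{(1)}$ is a union of disks.
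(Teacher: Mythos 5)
Your overall strategy is the same as the paper's: show that for $K$ sufficiently close to $1$ the map $f$ is forced to respect the combinatorics of the lifted triangulations $\tau_{S_1}\to\tau_{S_2}$ edge-by-edge, and then upgrade this combinatorial coincidence to a holomorphic isomorphism. You have also correctly identified the geometric facts that drive this: the lower bound $\chi$ on edge lengths, the upper bound $\epsilon/10$, and the lower bound (which the paper shows is in fact $\epsilon$) on the distance between two distinct points in the same fiber $W_i^{S_j}$. Your ending (the simplicial isomorphism conjugates the monodromy representations, so the covers are isomorphic, hence holomorphically equivalent) is a legitimate and arguably cleaner phrasing than the paper's isotopy-to-an-isometry argument.

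The gap is in the bridge step, and it is not just a technicality. You invoke ``a $K$-quasiconformal self-map of a thick hyperbolic surface moves points a bounded amount depending only on $K$ and $\epsilon$,'' and then apply this to $f$. But $f:S_1\to S_2$ is not a self-map: its source and target are \emph{a priori} different Riemann surfaces, so ``the displacement of a point'' and ``the unique nearby vertex'' have no meaning until one has already produced an identification of $S_1$ with $S_2$ — which is precisely what is to be proved. Normalizing a lift $\widetilde f:\D\to\D$ so that it ``fixes $\partial\D$ pointwise'' is likewise circular: such a normalization exists only if $f_*$ conjugates the two Fuchsian groups by an isometry, i.e.\ only if $S_1\cong S_2$. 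The correct intrinsic tool — and the one the paper uses (its Lemma~\ref{standardlemma}) — is that a $K$-quasiconformal map with $K$ near $1$ distorts hyperbolic distances between points at distance at least $\chi$ by a factor at most $2$. Applied to an edge $e$ of $\tau_{S_1}$ from $w_1\in W_i^{S_1}$ to $w_2\in W_j^{S_1}$ covering an edge $e_0$ of $\tau_0$: since $d(w_1,w_2)\le\epsilon/10$, one gets $d(f(w_1),f(w_2))\le\epsilon/5$, while any two distinct vertices of $W_j^{S_2}$ are at distance $\ge\epsilon$; taking $e'$ to be the lift of $e_0$ through $f(w_1)$, whose other endpoint $u_2\in W_j^{S_2}$ is within $\epsilon/10$ of $f(w_1)$, one concludes $f(w_2)=u_2$, and a similar short-versus-essential comparison after projecting to $S_0$ pins down the homotopy class of $f(e)$ rel endpoints. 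This is the argument you need in place of the displacement estimate; once you have it, your monodromy conclusion goes through.
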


In proving this lemma, we will use the following elementary result, see for example \cite{FMbook}.

\begin{lemma}
\label{standardlemma}
Let $\delta >0$. There exists $K_{\delta}>1$ such that if $K<K_{\delta}$ and $f:X \to Y$ is a $K$-quasiconformal mapping between any two hyperbolic Riemann surfaces $X$ and $Y$, then for any $a,b \in X$ with $d_X(x,y) \geq \delta$, we have 
$$
\frac{1}{2} d_X(a,b) \leq d_Y(f(a),f(b)) \leq 2d_X(a,b) 
$$
where $d_X,d_Y$ denote the hyperbolic metrics on $X$ and $Y$ respectively.
\end{lemma}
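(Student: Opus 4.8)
The plan is to pass to the universal cover and use the classical fact that a $K$-quasiconformal self-homeomorphism of the hyperbolic plane is a quasi-isometry of the hyperbolic metric whose multiplicative constant is $K$ and whose \emph{additive} constant $c(K)$ tends to $0$ as $K\to 1^{+}$; the hypothesis $d_X(a,b)\ge\delta$ is then exactly what lets this additive error be absorbed into the factor $2$. Throughout I note that the statement contains a harmless typo: ``$d_X(x,y)\ge\delta$'' should read $d_X(a,b)\ge\delta$.

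First I would reduce to $\D$. Uniformize $X=\D/\Gamma_X$ and $Y=\D/\Gamma_Y$ by torsion-free Fuchsian groups, so that the hyperbolic metrics on $X$ and $Y$ are those induced from $\D$, and lift $f$ to a $K$-quasiconformal homeomorphism $\widetilde f\colon\D\to\D$ conjugating $\Gamma_X$ onto $\Gamma_Y$ (the lift is $K$-quasiconformal because quasiconformality is local). Fix a lift $\widetilde a$ of $a$. Choosing a lift $\widetilde b$ of $b$ that realizes $d_\D(\widetilde a,\widetilde b)=d_X(a,b)$ (the infimum over lifts is attained, by discreteness of the orbit) gives $d_Y(f(a),f(b))\le d_\D(\widetilde f(\widetilde a),\widetilde f(\widetilde b))$, which will yield the upper estimate. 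For the lower estimate, the conjugation identity gives
\[
d_Y(f(a),f(b))=\inf_{\gamma\in\Gamma_X} d_\D\bigl(\widetilde f(\widetilde a),\widetilde f(\gamma\widetilde b)\bigr),
\]
and every pair $\widetilde a,\gamma\widetilde b$ projects to $a,b$, so $d_\D(\widetilde a,\gamma\widetilde b)\ge d_X(a,b)\ge\delta$. Hence it suffices to show: there is $c(K)>0$ with $c(K)\to0$ as $K\to1^{+}$ such that $\tfrac1K d_\D(p,q)-c(K)\le d_\D(\widetilde f(p),\widetilde f(q))\le K d_\D(p,q)+c(K)$ for every $K$-quasiconformal $\widetilde f\colon\D\to\D$ and all $p,q\in\D$. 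This I would prove by pre- and post-composing $\widetilde f$ with isometries of $\D$ (which are $1$-quasiconformal, so $K$ is unchanged, and which carry distances between an arbitrary pair to distances from $0$), reducing to the case $\widetilde f(0)=0$, and then quoting the sharp radial distortion theorem: $\varphi_K^{-1}(|z|)\le|\widetilde f(z)|\le\varphi_K(|z|)$ for all $z$, where $\varphi_K(r)=\mu^{-1}(\mu(r)/K)$ and $\mu$ is the modulus of the Gr\"otzsch ring (the lower bound comes from applying the upper bound to $\widetilde f^{-1}$). Converting via $d_\D(0,w)=2\operatorname{artanh}|w|$ and using the standard asymptotics of $\varphi_K$ (in particular $\varphi_K(r)\to r$ and $1-\varphi_K(r)$ comparable to $(1-r)^{K}$, with implied constants $\to 1$ as $K\to1$; see the usual references on quasiconformal mappings) gives the displayed two-sided bound.

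To finish, combine the two ingredients. From the upper estimate, $d_Y(f(a),f(b))\le K d_X(a,b)+c(K)$; from the lower estimate, $d_Y(f(a),f(b))\ge\tfrac1K d_X(a,b)-c(K)$. Using $d_X(a,b)\ge\delta$, the desired bounds $d_Y(f(a),f(b))\le 2 d_X(a,b)$ and $d_Y(f(a),f(b))\ge\tfrac12 d_X(a,b)$ both follow once $(2-K)\delta\ge c(K)$ and $(\tfrac1K-\tfrac12)\delta\ge c(K)$; as $K\to1^{+}$ the left-hand sides tend to $\delta$ and $\delta/2$ while $c(K)\to0$, so some $K_\delta>1$ works for all $K<K_\delta$. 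I expect the only real content to be the distortion estimate for $K$-quasiconformal self-maps of $\D$, and within it the behaviour near $\partial\D$: after normalization the point $z$ with $d_\D(0,z)=d_X(a,b)$ may sit arbitrarily close to the unit circle, so one genuinely needs the sharp boundary asymptotics of $\varphi_K$ (equivalently, the finiteness of the additive quasi-isometry constant and the fact that it tends to $0$), not merely local-uniform control. The lower bound $d_X(a,b)\ge\delta$ is indispensable here: near the origin a normalized $K$-quasiconformal map behaves like $|\widetilde f(z)|\approx 4^{1-1/K}|z|^{1/K}$, so without it the ratio $d_Y(f(a),f(b))/d_X(a,b)$ is unbounded.
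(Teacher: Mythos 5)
Your argument is correct, but it is worth saying that the paper does not actually prove this lemma at all: it is invoked as a standard fact with a citation to \cite{FMbook}, so any complete argument is ``extra'' relative to the text. What you supply is essentially the standard proof behind such a citation, and it is sound: lifting $f$ to a $K$-quasiconformal self-map of $\D$ conjugating the deck groups, noting that every lift pair of $a,b$ is at distance at least $d_X(a,b)\ge\delta$ (which handles the lower bound via the infimum over the orbit, while a distance-realizing lift handles the upper bound), and then quoting the Hersch--Pfluger radial distortion theorem $|\widetilde f(z)|\le\varphi_K(|z|)$ after normalizing by M\"obius maps. The one point that carries all the quantitative weight --- and which you correctly single out --- is that the resulting additive quasi-isometry constant $c(K)$ tends to $0$ as $K\to1^+$ \emph{uniformly over all pairs of points}, i.e.\ that $\sup_{0\le r<1}\bigl[2\operatorname{artanh}\varphi_K(r)-2K\operatorname{artanh} r\bigr]\to0$; this follows from the asymptotics $\mu(s)=\log(4/s)+o(1)$ together with the identity $\mu(r)\mu(\sqrt{1-r^2})=\pi^2/4$ (your phrase ``implied constants $\to1$'' is the right statement, since the boundary constant is $4^{2(1-K)}$ up to uniformly small errors), and it is indeed available in the standard references. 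With that fact in hand, your final absorption step, requiring $(2-K)\delta\ge c(K)$ and $(\tfrac1K-\tfrac12)\delta\ge c(K)$ for $K<K_\delta$, closes the proof; your remark that the hypothesis $d_X(a,b)\ge\delta$ is indispensable (because of the $|z|^{1/K}$ behaviour near the origin) is also accurate, as is your note that $d_X(x,y)$ in the statement is a typo for $d_X(a,b)$.
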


\begin{proof}[Proof of Lemma \ref{verticeslemma1}]
Let $\tau_1,\tau_2$ be lifts of the triangulation $\tau_0$ of $S_0$. Suppose that $f:S_1 \to S_2$ is a $K$-quasiconformal map which satisfies $f(W_i^{S_1}) = W_i^{S_2}$ for $i=1,\ldots,n$.

Consider an edge $e$ in $\tau_1$ with vertices $w_1,w_2$, and suppose $w_1 \in W_i^{S_1}, w_2 \in W_j^{S_1}$ for some $i \neq j$. Then $f(w_1) \in W_i^{S_2}$ and $f(w_2) \in W_j^{S_2}$ by the hypothesis. Let $e'$ be the edge of $\tau_2$ that contains $f(w_1)$ and that is a lift of the same edge in $\tau_0$ as $e$. We claim that $f(e)$ is homotopic to $e'$ in $S_2$ modulo the endpoints, providing that $K\leq K_1 = K_{\chi}$, where $K_{\chi}$ is the constant from Lemma \ref{standardlemma} and $\chi$ is the lower bound on the length of edges from $\tau_0$.

First observe that any two points of $W^{S_2}_j$ must be at a distance at least $\epsilon$ apart since a geodesic arc joining them in $S_2$ projects to a closed curve in $S_0$ that is geodesic except possibly at one point. Each such arc in $S_0$ is homotopically non-trivial and thus has length at least $\epsilon$.

Next, since $w_1,w_2$ are a distance at most $\epsilon / 10$ apart, if $K_1$ is close enough to $1$ then by Lemma \ref{standardlemma}, $f(w_1),f(w_2)$ are at most $\epsilon / 5$ apart. 
On the other hand, if $f(w_2)$ is not an endpoint of $e'$, then the diameter of $f(e)$ is at least $9\epsilon/10$ since the distance from $f(w_2)$ to the endpoint of $e'$ in $W_j^{S_2}$ is at least $\epsilon$ (as observed in the previous paragraph) and $e'$ has length at most $\epsilon/10$. Therefore the endpoints of $e'$ are $f(w_1)$ and $f(w_2)$.

Finally, if $\gamma$ is an arc with the same endpoints as $e'$ and not homotopic to $e'$, then consider the concatenation of $\gamma$ and $e'$. By projecting this to $S_0$, we find that the concatenation is homotopically non-trivial and must have diameter at least $\epsilon/2$. Since the length of $e'$ is at most $\epsilon/10$, then $\gamma$ has diameter at least $2\epsilon/5$. From this it follows that $f(e)$ is homotopic to $e'$.

%\begin{figure}
%\includegraphics[height=80mm]{FKM_7.jpg}
%\caption{In this diagram, $\gamma_0$ is homotopic to $e'$ and so can be the image of $e$. The curve $\gamma_1$ connects $f(w_1)$ to a point of $W_j^{S_2}$ which is too far away and so cannot be the image of $e$. The curve $\gamma_2$ is not homotopic to $e'$ and so also cannot be the image of $e$.\label{fig7}}
%\end{figure}

We may therefore isotope $f$ to $\widetilde{f}$ such that $\widetilde{f}(e)=e'$, and repeat this procedure for each edge of $\tau_1$. We can further arrange that $\widetilde{f}$ is an isometry on each edge because every lift of an edge of $\tau_0$ has the same length. We may therefore replace $f$ by an isometry on the interior of each triangle, and produce an isometry $\widetilde{f}:S_1 \to S_2$. Hence we conclude $S_1$ and $S_2$ are conformally equivalent.
\end{proof}

\subsection{Number of covers in a ball in $\mathcal{M}_{g,\epsilon}$}

In this subsection we estimate the number of genus $g$ covers of $S_0$ that live in a particular ball in $\mathcal{M}_{g,\epsilon}$.

\begin{lemma}
\label{verticeslemma2}
Let $S_1\in\mathcal{M}_{g,\epsilon}$ be a genus $g$ cover of $S_0$. There exists $K_2=K_2(S_0)$ such that the ball $B_{\mathcal{M}_g}(S_1,\log \sqrt{K_2}) \subset \mathcal{M}_{g,\epsilon}$ contains at most $D^g$ surfaces that are covers of $S_0$, where $D=D(S_0)$ is a positive constant.
\end{lemma}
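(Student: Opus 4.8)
The plan is to bound the number of genus $g$ covers of $S_0$ inside a small Teichm\"uller ball by combinatorics of the lifted triangulations. Fix a genus $g$ cover $S_1$ of $S_0$ and let $\tau_1$ be its lifted triangulation, which has $(g-1)n$ vertices partitioned into the sets $W_i^{S_1}$, $i=1,\dots,n$. Suppose $S_2$ is another genus $g$ cover of $S_0$ with lifted triangulation $\tau_2$, and suppose $d_{\mathcal{M}_g}(S_1,S_2)<\log\sqrt{K_2}$, i.e.\ there is a $K$-quasiconformal map $f\colon S_1\to S_2$ with $K<K_2$, where $K_2$ is to be chosen. The key point is Lemma~\ref{verticeslemma1}: if $f$ could be deformed so as to map $W_i^{S_1}$ \emph{onto} $W_i^{S_2}$ for every $i$, then (for $K_2\le K_1$) $S_1$ and $S_2$ would be conformally equivalent. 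So the number of covers in the ball is at most the number of covers $S_2$ for which no such vertex-respecting deformation exists, and I will bound this by counting the possible ``combinatorial images'' of the vertex set.

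First I would make the vertex sets close. Choosing $K_2\le K_\delta$ from Lemma~\ref{standardlemma} with a suitable $\delta$ (say $\delta=\chi$), the map $f$ distorts distances among vertices of $\tau_1$ by at most a factor $2$, and likewise $f^{-1}$. Since the lifted triangulation $\tau_2$ of $S_2$ has edges of length at most $\epsilon/10$ and mesh controlled below by $\chi$, every point of $S_2$ is within a bounded hyperbolic distance of a vertex of $\tau_2$; hence each image point $f(w)$, $w$ a vertex of $\tau_1$, lies within a fixed distance $\rho=\rho(S_0)$ of some vertex of $\tau_2$. Moreover, by the argument already used in the proof of Lemma~\ref{verticeslemma1}, distinct vertices of $W_i^{S_2}$ are at pairwise distance at least $\epsilon$, so within a ball of radius $\rho$ about $f(w)$ there are only boundedly many vertices of $\tau_2$, hence only boundedly many candidates for a ``nearest'' vertex; and the same local geometry shows that the number of vertices of $\tau_2$ reachable from a fixed one by an edge-path of bounded combinatorial length is bounded in terms of $S_0$ alone. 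I would use this to produce, for each cover $S_2$ in the ball, a map $\phi\colon \{\text{vertices of }\tau_1\}\to\{\text{vertices of }\tau_2\}$ sending $w$ to a nearby vertex; since $f$ is a homeomorphism and the triangulations are fine, $\phi$ is forced to be a bijection respecting the partition into the $W_i$'s, and in fact an isomorphism of the two triangulations as cell complexes over $\tau_0$ — at which point one can deform $f$ to respect the vertex sets, so $S_1\cong S_2$ by Lemma~\ref{verticeslemma1}, a contradiction unless the relevant data were already determined by $S_1$.

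The counting then proceeds as follows. A genus $g$ cover of $S_0$ together with its lifted triangulation is the same datum as a way of gluing $(g-1)$ copies of a fixed finite pattern (a fundamental domain of $\tau_0$) along their boundary edges, equivalently a choice, for each of the $\le kg$ edges, of how the two adjacent sheets are identified; each such local choice ranges over a set whose size is bounded by a constant $D_0=D_0(S_0)$ (essentially the number of edges emanating from a vertex of $\tau_0$, i.e.\ the degree bound $k$). The argument of the previous paragraph shows that once $S_1$ is fixed, the combinatorial type of $\tau_2$ relative to $\tau_1$ — hence all the gluing data — is confined to a set of size at most $D_0^{\,cg}$ for a universal $c$, because each of the $O(g)$ local gluing choices for $S_2$ is pinned down, up to boundedly many options, by the requirement that $\phi$ be a near-isometry onto $\tau_2$. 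Setting $D=D_0^{\,c}$ gives the bound $D^g$. The main obstacle I expect is the middle step: turning the coarse geometric statement ``$f$ moves vertices only a little'' into the rigid combinatorial statement ``$f$ induces a simplicial isomorphism $\tau_1\to\tau_2$ over $\tau_0$, up to boundedly many choices.'' This requires carefully using both the upper bound $\epsilon/10$ on edge lengths (so images of adjacent vertices stay close) and the lower bound $\epsilon$ on distances within each $W_i^{S_2}$ (so the nearest-vertex assignment is essentially unique and injective), exactly as in Lemma~\ref{verticeslemma1} but now run in parallel over the whole surface rather than edge by edge; the bookkeeping of how many covers share a given combinatorial type is the remaining routine part.
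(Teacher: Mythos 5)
Your high-level strategy — bound the number of covers in the ball by a finite amount of combinatorial data, $O(g)$ slots with $O(1)$ options per slot — matches the spirit of the paper's proof, but the route you take has two genuine gaps, and they are precisely at the two places where the paper brings in its main tools.

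\emph{Gap 1: the ``nearest-vertex'' map is not well-defined, and the intermediate claim that it is forced to be a simplicial isomorphism is both unjustified and, if true in the naive sense, too strong.} For a vertex $w$ of $\tau_1$, you only know $f(w)$ lies within about $\epsilon/5$ of \emph{some} vertex of $\tau_2$. But distinct vertices of $\tau_2$ can be as close as $\chi$ to each other (the short edges of $\tau_0$), and nothing makes $\chi$ large compared to $\epsilon/5$, so ``nearest vertex'' is not canonical; nor is there any reason the nearest vertex lies in the correct $W_j^{S_2}$. You flag this as ``the main obstacle,'' but the rest of the argument depends on it being resolved. Note also that if the nearest-vertex map were literally a simplicial isomorphism over $\tau_0$ respecting the $W_i$'s, then by Lemma~\ref{verticeslemma1} every cover in the ball would already be $S_1$ up to isometry — a conclusion strictly stronger than the lemma (which allows up to $D^g$), so the conclusion you are aiming at cannot hold cover by cover; it must be a \emph{counting} statement. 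The paper avoids this entirely by \emph{never} trying to match vertices of $\tau_1$ to vertices of $\tau_2$. Instead it introduces an auxiliary fixed $\delta$-grid $\Omega(\delta)$ on $S_1$ (lifted from a grid on $S_0$), pulls $\tau_2$ back to $\tau_2'=f^{-1}(\tau_2)\subset S_1$, and compares $\tau_2'$ to the grid, not to $\tau_1$.

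\emph{Gap 2: you have no mechanism to conclude that two covers with the same combinatorial data are conformally equivalent.} Your counting argument wants to say: if $S_2, S_3$ induce the same ``combinatorial image,'' then $S_2\cong S_3$. To apply Lemma~\ref{verticeslemma1} to the composite map $S_2\to S_1\to S_3$ you need a quasiconformal map of small dilatation that carries $W_i^{S_2}$ \emph{onto} $W_i^{S_3}$ exactly. A near-isometry on points and an agreement about nearest grid vertices does not give this; you need to deform your map so that it actually hits the right points. This is exactly what the paper does with Reich's lemma (Lemma~\ref{reichlemma}): after pulling back $\tau_2$ to $\tau_2'$ in $S_1$, each vertex of $\tau_2'$ is moved by a $K(\chi/4,\delta)$-quasiconformal homeomorphism, supported on a small disjoint ball around it, onto a nearby grid point of $\Omega(\delta)$. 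The corrected map $\phi=f\circ h^{-1}$ then sends the grid \emph{exactly} onto the vertices of $\tau_2$ and remains quasiconformal with controlled dilatation. Two covers with the same labeling of the grid now give a $K_1$-quasiconformal map between them that genuinely sends $W_i$ to $W_i$, so Lemma~\ref{verticeslemma1} applies; the count is then just the number of labelings $(n+1)^{N(g-1)}\le D^g$ where $N=|\Omega_0(\delta)|$. Your final ``$O(g)$ local gluing slots, $O(1)$ options each'' is the right shape of bound, but without the grid and the Reich correction you have neither a well-defined combinatorial invariant nor the rigidity statement that ties equal invariants to equal points of moduli space.
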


The strategy to prove this lemma is to first construct a fine enough grid $\Omega$, consisting of a finite number of points, in $S_1$.
Then we show that if $S_2$ is another cover of $S_0$ and $f:S_1 \to S_2$ is a $K$-quasiconformal map where $K$ is close enough to $1$, then we can replace $f$ with a homotopic quasiconformal map $\phi:S_1 \to S_2$ such that $\phi^{-1}$ maps vertices of $\tau_2$ to a subset of $\Omega$. Then we are able to associate a labeling of the grid $\Omega$ to $S_2$, with labels in a finite set.

It will follow from Lemma \ref{verticeslemma1} that if two covers $S_2$ and $S_3$ that are contained in the ball $B_{\mathcal{M}_g}(S_1,\log \sqrt{K_2})$ correspond to the same labeling of the grid $\Omega$, then $S_2$ and $S_3$ are isometric. This way we are able to estimate the number of covers of $S_0$ that live in $B_{\mathcal{M}_g}(S_1,\log \sqrt{K_2})$ by the number of labellings of the grid $\Omega$.

We now prove Lemma \ref{verticeslemma2} in detail.
 
\begin{proof}[Proof of Lemma \ref{verticeslemma2}]
For each $\delta>0$, let $\Omega_0(\delta)$ be a $\delta$-grid on $S_0$, that is, 
$\Omega_0(\delta)$ consists of a finite number of points and for every point $z \in S_0$,
\[ B_{S_0}(z,\delta ) \cap \Omega_0(\delta) \neq \emptyset.\]
Let $S_1 \in \mathcal{M}_{g,\epsilon}$ be a cover of $S_0$ and denote by $\Omega(\delta)$ the lift of $\Omega_0(\delta)$ to $S_1$.

Assume that $f:S_1 \to S_2$ is a $K$-quasiconformal map, where $S_2$ is another cover of $S_0$. Let $\tau_2$ be the lift of the triangulation $\tau_0$ to $S_2$ and let $\tau_2'$ be the triangulation $f^{-1}(\tau_2)$ in $S_1$ (observe that edges of $\tau_2'$ are not necessarily geodesic arcs).

Recalling the notation of Lemma \ref{standardlemma}, assume $K<K_{\chi}$. Then it follows from Lemma \ref{standardlemma} that for any vertex $z \in \tau_2'$, the ball $B_{S_1}(z, \chi/ 2)$
contains no other vertices of $\tau_2'$. Thus, the balls of radius $\chi/4$ centered at the vertices of $\tau'_2$ are mutually disjoint. Let $\delta < \chi / 4$ and let $\omega \in \Omega (\delta)$ be a point of the grid $\Omega(\delta)$ such that $d_{S_1}(\omega,z)<\delta$. We can find a map that is supported on the ball $B_{S_1}(z, \chi/4)$ that maps $z$ to $\omega$, described in the following lemma. 

\begin{lemma}[\cite{Reich}]
\label{reichlemma}
Let $a>\delta>0$ and $p \in \D$. There exists a function $K(a,\delta)>1$ such that for every point $q \in B_{\D}(p,a)$ with $d(p,q) \leq \delta$, there exists a $K(a,\delta)$-quasiconformal map $f:B_{\D}(p,a) \to B_{\D}(p,a)$ 
that is the identity on $\partial B_{\D}(p,a)$ and $f(p)=q$. Moreover, for all $a$, $\lim_{\delta \to 0} K(a,\delta) = 1$.
\end{lemma}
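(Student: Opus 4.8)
\section*{Proof proposal for Lemma \ref{reichlemma}}

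The plan is to write down an explicit map. First I would pass to a model situation. Since every Möbius automorphism of $\D$ is conformal and hence preserves the quasiconformal dilatation, by conjugating with the hyperbolic isometry of $\D$ carrying $p$ to $0$ I may assume $p=0$. Then $B_\D(0,a)$ is the round disk $B=\{z\in\C:|z|<\rho\}$ with $\rho=\tanh(a/2)$, and any $q$ with $d(0,q)\le\delta$ satisfies $|q|\le\rho_0:=\tanh(\delta/2)<\rho$ (the strict inequality because $\delta<a$).

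Next, for such a $q$ I would define $f\colon\overline B\to\C$ by
\[
f(z)=z+q\left(1-\frac{|z|}{\rho}\right).
\]
This $f$ is Lipschitz, equals the identity on $\partial B$ (so it is the continuous extension by the identity there), and $f(0)=q$. I would then verify that $f$ restricts to a homeomorphism of $B$ onto itself. The triangle inequality gives, for all $z_1,z_2$,
\[
|f(z_1)-f(z_2)|\ \ge\ |z_1-z_2|-\frac{|q|}{\rho}\bigl||z_1|-|z_2|\bigr|\ \ge\ \left(1-\frac{\rho_0}{\rho}\right)|z_1-z_2|,
\]
so $f$ is injective (indeed bi-Lipschitz onto its image), while $|f(z)|\le\rho_0+|z|\bigl(1-\rho_0/\rho\bigr)<\rho$ for $|z|<\rho$ shows $f(B)\subseteq B$. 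Since $f$ agrees with the identity on $\partial B$, a Brouwer degree argument (the straight-line homotopy to the identity fixes $\partial B$ pointwise) yields $f(B)\supseteq B$, hence $f(B)=B$; by invariance of domain $f$ is open, and an open continuous bijection is a homeomorphism.

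Finally I would estimate the dilatation. Off the origin (a null set, harmless for the ACL criterion) one computes $f_{\bar z}=-\dfrac{qz}{2\rho|z|}$ and $f_z=1-\dfrac{q\bar z}{2\rho|z|}$, so that
\[
|\mu_f(z)|=\frac{|f_{\bar z}(z)|}{|f_z(z)|}\ \le\ \frac{|q|/(2\rho)}{1-|q|/(2\rho)}\ \le\ \frac{\rho_0}{2\rho-\rho_0}\ =:\ \kappa(a,\delta)\ <\ 1,
\]
where the bound depends only on $a$ and $\delta$ and not on the particular $q$, and $\kappa(a,\delta)\to0$ as $\delta\to0$ for fixed $a$. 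Hence $f$ is $K(a,\delta)$-quasiconformal with $K(a,\delta)=\dfrac{1+\kappa(a,\delta)}{1-\kappa(a,\delta)}>1$ and $\lim_{\delta\to0}K(a,\delta)=1$; conjugating back by the Möbius transformation transports $f$ to the desired self-map of $B_\D(p,a)$. The construction is completely explicit, so I do not expect a genuine obstacle; the only point deserving care is upgrading ``orientation-preserving local homeomorphism with small dilatation'' to ``homeomorphism of $B$ onto $B$'', which is why I would rely on the explicit bi-Lipschitz lower bound and a degree argument rather than on general quasiconformal theory, and on keeping the bound on $\|\mu_f\|_\infty$ manifestly uniform over all admissible $q$.
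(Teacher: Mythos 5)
The paper does not actually prove this lemma: it simply cites it to Reich's article \cite{Reich}, with the added remark that Reich computed the function $K(a,\delta)$ (Reich in fact determines the \emph{extremal} dilatation, via a quasiconformal map whose complex dilatation has the form $ke^{i\theta}$, as the title of that paper indicates). Your proposal therefore supplies a self-contained elementary proof where the paper punts to a reference, and on its own terms it is correct. After conjugating by a conformal automorphism of $\D$ to reduce to $p=0$ and $B_\D(0,a)=\{|z|<\rho\}$ with $\rho=\tanh(a/2)$, the radial interpolation $f(z)=z+q(1-|z|/\rho)$ is exactly the right explicit device: the bi-Lipschitz lower bound $|f(z_1)-f(z_2)|\ge (1-\rho_0/\rho)|z_1-z_2|$ gives injectivity, the triangle-inequality estimate $|f(z)|\le\rho_0+|z|(1-\rho_0/\rho)<\rho$ gives $f(B)\subseteq B$, the homotopy to the identity fixing $\partial B$ gives $f(B)\supseteq B$ via degree, and the Wirtinger computation together with the reverse triangle inequality yields $\|\mu_f\|_\infty\le\rho_0/(2\rho-\rho_0)<1$ uniformly in $q$, whence $K=(1+\kappa)/(1-\kappa)\to1$ as $\delta\to0$. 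Since $f$ is Lipschitz (hence ACL) and the $|z|$-singularity at the origin is a null set, the quasiconformality is genuine, and the Jacobian positivity you implicitly need for the degree argument follows from $\kappa<1$. The trade-off relative to the cited route is only quantitative: your $K(a,\delta)$ is not the optimal constant Reich computes, but the lemma asks only for some function tending to $1$, so your cruder explicit bound is entirely sufficient for the application in the paper.
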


We remark that Edgar Reich in \cite{Reich} computed the function $K(a,\delta)$.

Let $h :S_1 \to S_1$ be the $K(\chi/4, \delta)$-quasiconformal map arising from Lemma \ref{reichlemma} that is supported on the disjoint union of the balls of radius $\chi/4$ centered at the vertices of $\tau_2'$ and that maps vertices of $\tau_2'$ to the grid $\Omega(\delta)$. Let $\phi:S_1 \to S_2$ be given by $f \circ h^{-1}$. 

\begin{center}
\begin{tikzpicture}[description/.style={fill=white,inner sep=2pt}]
\matrix (m) [matrix of math nodes, row sep=3em,
column sep=2.5em, text height=1.5ex, text depth=0.25ex]
{S_1 & & S_2 \\
& S_0 & \\ };
\path[->,font=\scriptsize]
(m-1-1) edge node[auto] {$ f $} (m-1-3)
(m-1-1) edge node[auto] {$ \pi_1 $} (m-2-2)
(m-1-3) edge node[auto] {$ \pi_2 $} (m-2-2);
\path[->,font=\scriptsize] (m-1-1) edge [out=180, in=100, loop] node[auto]{$ h $} (m-1-1);
\path[->,font=\scriptsize] (m-1-1) edge [out=60, in=120] node[auto]{$ \phi $} (m-1-3);
\end{tikzpicture}
\end{center}

Then $\phi$ is $K \cdot K(\chi/4, \delta)$-quasiconformal mapping and $\phi^{-1}$ maps vertices of $\tau_2$ to $\Omega(\delta)$.
By Lemma \ref{reichlemma}, we can choose $\delta$ small enough such that $K(\chi/4, \delta) \leq K_1^{1/4}$, where $K_1=K_1(S_0)$ is the constant from Lemma \ref{verticeslemma1}. Set $K_2 = K_1^{1/4}$. Then if we choose $K \le K_2$, we may conclude that
$\phi$ is $\sqrt{K_1}$-quasiconformal.

We label the grid $\Omega(\delta)$ in $S_1$ as follows. Label $\omega \in \Omega(\delta)$ by $i \in \{1,\ldots,n\}$ if $\phi(\omega) \in W^{S_2}_i$, recalling that $W^{S_2}_i$ is the set of pre-images in $S_2$ of the vertex $v_i \in \tau_0$. 
Otherwise, we label $\omega \in \Omega(\delta)$ by $0$. In this way, we associate a labeling of $\Omega(\delta)$ to every element of $B_{\mathcal{M}_{g}}(S_1,\log \sqrt{K_2})$.

If $S_3$ is another cover of $S_0$ in $B_{\mathcal{M}_g}(S_1, \log \sqrt{K_2})$, then let $\widetilde{\phi}$ denote the corresponding map $\widetilde{\phi}:S_1 \to S_3$. 
If the maps $\phi, \widetilde{\phi}$ induce the same labeling of the grid $\Omega(\delta)$, then the map $\widetilde{\phi} \circ \phi^{-1}:S_2 \to S_3$ is $K_1$-quasiconformal and therefore by Lemma \ref{verticeslemma1}  this map is homotopic to an isometry. But this means that $S_2$ and $S_3$ determine the same point of $\mathcal{M}_{g}$. Thus, the number of covers of $S_0$ in $B_{\mathcal{M}_{g}}(S_1,\log \sqrt{K_2})$ is bounded above by the number of labellings of the grid.

The grid $\Omega_0(\delta)$ contains finitely points, say $N$. Since $S_1$ is a genus $g$ cover of $S_0$, there are $(g-1)N$ points of the grid $\Omega(\delta)$ in $S_1$.
The number of possible labellings of $\Omega(\delta)$ is the number of labels raised to the number of points in the grid, that is, at most 
\[(n+1)^{N(g-1)}\leq D^g,\]
for some constant $D$ depending only on $\epsilon$, $S_0$ and the triangulation $\tau_0$.
This completes the proof.
\end{proof}

\subsection{Proof of Theorem \ref{lowerbound}}

By the results of Muller and Puchta \cite{MP}, there are at least $(Pg)^{2g}$ genus $g$ covers of $S_0$ for large $g$. By Lemma \ref{verticeslemma2}, every ball $B_{\mathcal{M}_{g}}(S,\log \sqrt{K_2})$ contains at most $D^g$ other genus $g$ covers of $S_0$, where $K_2=K_2(S_0)$ and $D=D(S_0)$.

Hence the number of balls of radius $\log \sqrt{K_2}$ needed to cover $\mathcal{M}_{g,\epsilon}$ is at least
\[ \eta(\mathcal{M}_{g,\epsilon} , \log \sqrt{K_2} ) \geq \frac{(Pg)^{2g}}{D^g} = (c_lg)^{2g},\]
for large $g$, where $c_l = P/D^{1/2}$ depends only on $S_0$. This proves Theorem \ref{lowerbound} with $r_l = \log \sqrt{K_2}$.

\section{Covering numbers, Teichm\"uller spaces and the proof of Theorem \ref{thmteich}}

The goal of this section is to show how to extend the particular inequalities for the covering number of $\mathcal{M}_{g,\epsilon}$ given by Theorems \ref{upperbound} and \ref{lowerbound} to all values of $r$. To this end, we will estimate the covering number for balls in Teichm\"{u}ller space $\T_g$. This metric space is not immediately amenable to estimating the cover number and so to simplify the task, we will use the Bers embedding of $\T_g$ into a Banach space where we are able to estimate the covering number. We also need to show that the covering number is well-behaved with respect to the Bers embedding.

\subsection{Basic properties of covering numbers}

A bi-Lipschitz homeomorphism $f:X \rightarrow Y$ between metric spaces is a mapping $f$ such that $f$ and $f^{-1}$ satisfy a uniform Lipschitz condition, that is, there exists $L \geq 1$ such that
\begin{equation*}
\frac{ d_{X}(x,y) }{L} \leq d_{Y}(f(x),f(y)) \leq Ld_{X}(x,y)
\end{equation*}
for all $x,y \in X$. The smallest such constant $L$ is called the {\it isometric distortion} of $f$.

\begin{theorem}[Quasi-invariance of covering numbers]
\label{thm2}
Let $f:(X,d_X) \to (Y,d_Y)$ be a  $L$-bi-Lipschitz homeomorphism between metric spaces.
Then for all $E \subset X$ and $r>0$, 
\[ \eta_X (X,r) \leq \eta_Y \left (Y, \frac{r}{L} \right ).\]
\end{theorem}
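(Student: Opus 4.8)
The plan is to transport a minimal cover of $Y$ back to $X$ through $f^{-1}$. First I would dispose of the trivial case: if $\eta_Y(Y,r/L)=\infty$ the asserted inequality holds vacuously, so I would assume it equals some finite $N$ and fix balls $B_Y(y_1,r/L),\dots,B_Y(y_N,r/L)$ covering $Y$. Since an $L$-bi-Lipschitz homeomorphism is in particular a bijection, the points $x_i:=f^{-1}(y_i)\in X$ are well defined, and the claim would be that the balls $B_X(x_1,r),\dots,B_X(x_N,r)$ cover $X$.

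To verify the claim I would take an arbitrary $x\in X$; then $f(x)\in B_Y(y_i,r/L)$ for some $i$, i.e. $d_Y(f(x),y_i)<r/L$, and the left-hand inequality of the bi-Lipschitz condition, rearranged as $d_X(\,\cdot\,,\cdot\,)\le L\,d_Y(f(\,\cdot\,),f(\,\cdot\,))$, gives $d_X(x,x_i)\le L\,d_Y(f(x),f(x_i))=L\,d_Y(f(x),y_i)<r$. Hence $x\in B_X(x_i,r)$, so the $N$ balls $B_X(x_i,r)$ cover $X$ and $\eta_X(X,r)\le N=\eta_Y(Y,r/L)$.

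I do not expect any genuine obstacle here; this is essentially a one-paragraph argument once the bookkeeping is fixed. The only two points that need care are that a bi-Lipschitz homeomorphism is bijective (so that centers of balls in $Y$ can be pulled back to honest points of $X$) and that one invokes the correct side of the bi-Lipschitz inequality — the fact that $f$ can shrink distances by a factor of at most $L$ is precisely what forces an $(r/L)$-ball in $Y$ to pull back inside an $r$-ball in $X$. Finally, I would remark that the identical argument applied to the restriction $f|_E\colon E\to f(E)$, with the metrics induced from $X$ and $Y$, yields the more general inequality $\eta_X(E,r)\le \eta_Y(f(E),r/L)$ for any $E\subset X$, which is presumably the form actually used later in the paper.
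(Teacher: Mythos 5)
Your proof is correct and follows essentially the same route as the paper's: pull back a minimal cover of $Y$ by $(r/L)$-balls via $f^{-1}$ and use the bi-Lipschitz estimate $d_X(x,x_i)\le L\,d_Y(f(x),f(x_i))$ to show each pullback is contained in an $r$-ball in $X$. Your closing remark about restricting to $E$ is also the form the paper implicitly uses.
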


\begin{proof}
Let $r_1 = r/L$, and cover $Y$ by finitely many balls $\widetilde{B_i} = B(y_i,r_1)$, for $i=1,\ldots,n$. Then since $f$ is surjective and $L$-bi-Lipschitz, we have
\[ f^{-1}(\widetilde{B_i}) \subset  B(f^{-1}(y_i),Lr_1 ) .\]

To see this, suppose that $y_i = f(x_i)$ and $d_Y(y,y_i) < r_1$. Then $y=f(x)$ for some $x \in X$, and we have
\[ d_X(x,x_i) \leq L d_Y(f(x),f(x_i)) < Lr_1.\]
For $i=1,\ldots,n$, let $B_i = B(x_i,r)$. Then 
\[ E \subset \bigcup _{i=1}^n f^{-1}(\widetilde{B_i}) \subset \bigcup _{i=1}^n B_i,\]
and we are done.
\end{proof}

The following result gives the lower bound for the number of small balls needed to cover a bigger ball in an $n$-dimensional Banach space.

\begin{lemma}\label{banach} Let $(X,|| \cdot ||)$ denote a $n$-dimensional real Banach space. Then for $R>r>0$ we have
$$
\eta_X(B_{X}(0,R),r) \ge \left( \frac{R}{r} \right)^{n}.
$$
\end{lemma}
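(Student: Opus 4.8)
The plan is to prove this by a volume (Haar measure) comparison, which is the standard way one shows such a lower bound in finite dimensions. First I would fix a linear isomorphism $T\colon X\to\R^{n}$ and pull back Lebesgue measure on $\R^{n}$ to obtain a translation-invariant Borel measure $\mu$ on $X$. Since any two norms on a finite-dimensional real vector space are equivalent, the unit ball $B_{X}(0,1)$ is bounded and has nonempty interior, so $0<\mu(B_{X}(0,1))<\infty$. Only ratios of $\mu$-volumes will enter the argument, so the choice of $T$ (hence the normalization of $\mu$) is irrelevant.

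The key structural fact is the behaviour of balls in a normed space under dilation and translation: for every $x\in X$ and every $s>0$ we have $B_{X}(x,s)=x+s\cdot B_{X}(0,1)$. Because $\mu$ is translation-invariant and, being transported from Lebesgue measure via the linear map $T$, scales by the factor $s^{n}$ under the dilation $v\mapsto sv$, this gives
\[
\mu\bigl(B_{X}(x,s)\bigr)=s^{n}\,\mu\bigl(B_{X}(0,1)\bigr)
\]
for all $x\in X$ and $s>0$. In particular $\mu(B_{X}(0,R))=R^{n}\mu(B_{X}(0,1))$ and $\mu(B_{X}(x_i,r))=r^{n}\mu(B_{X}(0,1))$.

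Now suppose $B_{X}(0,R)$ is covered by $N$ balls $B_{X}(x_{1},r),\dots,B_{X}(x_{N},r)$. By monotonicity and subadditivity of $\mu$,
\[
R^{n}\,\mu\bigl(B_{X}(0,1)\bigr)=\mu\bigl(B_{X}(0,R)\bigr)\le\sum_{i=1}^{N}\mu\bigl(B_{X}(x_{i},r)\bigr)=N\,r^{n}\,\mu\bigl(B_{X}(0,1)\bigr).
\]
Dividing by $\mu(B_{X}(0,1))>0$ yields $N\ge (R/r)^{n}$, and taking the infimum over all finite coverings of $B_{X}(0,R)$ by balls of radius $r$ gives $\eta_{X}(B_{X}(0,R),r)\ge (R/r)^{n}$, as claimed. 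There is no serious obstacle in this argument; the only point deserving a careful sentence is the legitimacy of transporting Lebesgue measure to the abstract space $X$ together with the finiteness and positivity of $\mu$ on balls, both of which are immediate consequences of the equivalence of norms in finite dimensions.
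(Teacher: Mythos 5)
Your proof is correct and uses essentially the same approach as the paper: both identify $X$ with $\R^n$ (you via a pullback of Lebesgue measure, the paper by direct identification), invoke the scaling $\mu(B_X(0,s))=s^n\mu(B_X(0,1))$ under dilation, and then conclude by subadditivity of the volume over any covering family. The paper is a bit terser but the underlying argument is identical.
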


\begin{proof} We identify $X$ with $\R^n$ endowed with the norm $||\cdot ||$. Let $\text{Vol}$ denote the standard volume on $\R^n$. Let $A:\R^n \to \R^n$ denote the dilation $A(x_1,...,x_n)=(R/r)(x_1,...,x_n)$. Then 
$A(B_{X}(0,r))=B_{X}(0,R)$. Thus
$$
\text{Vol}(B_{X}(0,R))=\left( \frac{R}{r} \right)^{n} \text{Vol}(B_{X}(0,r)),
$$
so we need at least $(R/r)^{n}$ balls of radius $r$ in $(\R^{n},||\cdot ||)$ to cover a ball of radius $R$ in $(\R^{n}, ||\cdot ||)$, and we are finished.
\end{proof}

\subsection{Teichm\"uller space is locally bi-Lipschitz equivalent to the Bers space} Let $M$ denote an arbitrary  hyperbolic Riemann surface. By $\T(M)$ we denote the Teichm\"uller space of marked Riemann surfaces that are quasiconformally equivalent to $M$. The space $\T(M)$ is endowed with the Teichm\"uller metric $d_{\T(M)}$. By $Q(M)$ we denote the Banach space of holomorphic quadratic differentials on $M$ endowed with the supremum norm
\[ 
\av \av \varphi \av \av _{Q(M)} = \sup _{z \in M} \rho_M^{-2}(z) \av \varphi (z) \av,
\]
where $\rho_M$ is the density of the hyperbolic metric on $M$ (although the expression  $\rho_M^{-2}(z) \av \varphi (z) \av$ is given in local charts on $M$ it represents a well defined function on $M$). The corresponding distance in the Banach space $Q(M)$ is denoted by $d_{Q(M)}$. We call $Q(M)$ the Bers space associated to the surface $M$. 

Let $\beta_{M} :\T(M) \to Q(M)$ denote the Bers embedding of $\T(M)$ into $Q(M)$ with respect to the base point $M$. The next (most probably well known) theorem states that the restriction of  $\beta_{M}$ on a ball of a fixed radius in $\T(M)$ is a bi-Lipschitz mapping onto its image. 

\begin{theorem}\label{prop10} Let $R>0$ and  $B_{\T(M)}(M,R) \subset \T(M)$ denote the ball of radius $R$ and centered at $M$. There are positive constants $b_l$ and $b_u$, where $b_l$ is a universal constant and $b_u$ depends only on $R$, 
such that 
\begin{equation}\label{jednacina}
b_l d_{Q(M)}(\beta_M(\tau_1),\beta_M(\tau_2))\le d_{\T(M)}(\tau_1,\tau_2) \le  b_u d_{Q(M)}(\beta_M(\tau_1),\beta_M(\tau_2)),
\end{equation}
for any $\tau_1,\tau_2 \in B_{\T(M)}(M,R)$. Moreover, there is a universal function  $a(R)>0$, $R>0$ (that is $a(R)$ does not depend on $M$),  such that 
\begin{equation}\label{jednacina-druga}
B_{Q(M)}(0,a(R)) \subset \beta_M(B_{\T(M)}(M,R)) \subset B_{Q(M)}(0,6).
\end{equation}

\end{theorem}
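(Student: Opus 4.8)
The plan is to extract \eqref{jednacina} and \eqref{jednacina-druga} from the classical quantitative properties of the Bers embedding together with the infinitesimal (Kobayashi = Teichm\"uller) description of the Teichm\"uller metric, being careful that all constants are tracked with the right dependencies (universal on one side, depending only on $R$ on the other). First I would recall the two standard facts: (a) the Bers embedding $\beta_M$ is holomorphic, it sends the basepoint $M$ to $0 \in Q(M)$, and its image is contained in the ball $B_{Q(M)}(0,6)$ (this is the classical bound $\|\beta_M(\tau)\|_{Q(M)} \le 6$ valid for every $M$); and (b) there is a universal $\rho_0>0$ such that $B_{Q(M)}(0,\rho_0)$ is contained in $\beta_M(\T(M))$ — this is the classical lower bound (one may take $\rho_0 = 2$ from the $\lambda$-lemma / Ahlfors--Weill section, but any universal constant suffices). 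The derivative of $\beta_M$ at the basepoint is, up to a universal factor, the identity on $Q(M)$, which is what ultimately pins down $b_l$ as universal.

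Next I would prove the bi-Lipschitz estimate \eqref{jednacina}. The upper bound for $d_{\T(M)}$ in terms of $d_{Q(M)}$: since $\beta_M(B_{\T(M)}(M,R))$ is a bounded subset (contained in $B_{Q(M)}(0,6)$ by (a)) of the complex Banach manifold, and $\beta_M$ is a biholomorphism onto its image, I would invoke the fact that the Teichm\"uller metric is the Kobayashi metric of $\T(M)$ and compare it, via $\beta_M^{-1}$, with the Kobayashi metric of the open set $\beta_M(\T(M)) \supset B_{Q(M)}(0,\rho_0)$; on the compactly-contained ball $\beta_M(B_{\T(M)}(M,R))$ (compact containment needs the image of the $R$-ball to sit a definite distance inside $\beta_M(\T(M))$, which follows from \eqref{jednacina-druga}'s outer inclusion and a Schwarz-lemma argument), the Kobayashi metric is comparable to the flat Banach metric $d_{Q(M)}$ with a constant $b_u$ depending only on $R$. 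For the lower bound, I would use that $\beta_M$ is $1$-Lipschitz-type contracting in the right normalization — more precisely, the Kobayashi/Teichm\"uller metric dominates a universal multiple of the norm metric because $\beta_M(\T(M)) \subset B_{Q(M)}(0,6)$, a ball of universal radius, whose Kobayashi metric at the center dominates a universal multiple of the norm metric; integrating along paths gives $b_l d_{Q(M)} \le d_{\T(M)}$ with $b_l$ universal. This is essentially the argument in \cite{F2}, adapted here.

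Finally \eqref{jednacina-druga}: the outer inclusion $\beta_M(B_{\T(M)}(M,R)) \subset B_{Q(M)}(0,6)$ is immediate from fact (a) since the whole Teichm\"uller space maps into $B_{Q(M)}(0,6)$. For the inner inclusion $B_{Q(M)}(0,a(R)) \subset \beta_M(B_{\T(M)}(M,R))$, I would argue as follows: by (b), $B_{Q(M)}(0,\rho_0) \subset \beta_M(\T(M))$, so $\beta_M^{-1}$ is defined on $B_{Q(M)}(0,\rho_0)$; by the lower bound just proved (which gives $d_{\T(M)}(M,\beta_M^{-1}(\varphi)) \le b_l^{-1}\|\varphi\|$ — wait, that is the wrong direction, so instead use the \emph{upper} bound form: on $B_{Q(M)}(0,\rho_0/2)$, say, a Schwarz-lemma estimate for the holomorphic map $\beta_M^{-1}$ bounded into $\T(M)$ shows $d_{\T(M)}(M,\beta_M^{-1}(\varphi)) \le c\|\varphi\|$ for a universal $c$), so choosing $a(R) = \min\{\rho_0/2,\, R/c\}$ forces $\beta_M^{-1}(B_{Q(M)}(0,a(R))) \subset B_{\T(M)}(M,R)$, i.e. $B_{Q(M)}(0,a(R)) \subset \beta_M(B_{\T(M)}(M,R))$; note $a(R)$ depends only on $R$ (through universal constants), as required.

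The main obstacle I anticipate is making the comparison between the Teichm\"uller (Kobayashi) metric and the flat Banach metric fully quantitative and uniform in $M$: one must ensure that the "compact containment" of $\beta_M(B_{\T(M)}(M,R))$ inside $\beta_M(\T(M))$ is controlled purely in terms of $R$ and universal constants (not the geometry of $M$), which is exactly where the universal bounds $B_{Q(M)}(0,\rho_0) \subset \beta_M(\T(M)) \subset B_{Q(M)}(0,6)$ do the work. Once those two universal inclusions are in hand, everything else is a Schwarz-lemma bookkeeping exercise of the kind carried out in \cite{F2}.
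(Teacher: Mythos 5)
Your route is genuinely different from the paper's. The paper first establishes all four estimates for the universal Teichm\"uller space $\T(\D)$ (citing explicit formulas on p.~113 of Lehto's book for the bi-Lipschitz constants, Nehari for the $6$-ball, and openness of $\beta_\D$ for the inner ball), and then \emph{transfers} the estimates to an arbitrary $M$ using the natural pair of embeddings $\iota_\T : \T(M)\to\T(\D)$, $\iota_Q : Q(M)\to Q(\D)$, the intertwining $\iota_Q\circ\beta_M=\beta_\D\circ\iota_\T$, the fact that $\iota_Q$ is an isometry, and the quantitative Lipschitz bounds on $\iota_\T$ from \cite{MS} (Theorem~3.3). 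You instead work intrinsically on $Q(M)$ with Kobayashi-metric monotonicity and Schwarz-lemma bookkeeping. Several of your steps go through: the outer inclusion in \eqref{jednacina-druga} is indeed Nehari; the lower bound $b_l\, d_{Q(M)}\le d_{\T(M)}$ with $b_l$ universal follows, exactly as you say, from $\beta_M(\T(M))\subset B_{Q(M)}(0,6)$ and the fact that the Kobayashi metric of a norm ball of universal radius dominates a universal multiple of the norm; and the inner inclusion in \eqref{jednacina-druga} follows from Ahlfors--Weill together with one application of the Schwarz lemma to $\beta_M^{-1}$ on $B_{Q(M)}(0,\rho_0/2)$.

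However, your argument for the \emph{upper} bound $d_{\T(M)}\le b_u\, d_{Q(M)}$ on $B_{\T(M)}(M,R)$ has a genuine gap, precisely at the step you flag as ``compact containment.''\ To compare the Kobayashi metric of $\beta_M(\T(M))$ with the flat norm metric at an arbitrary point $\varphi=\beta_M(\tau_1)$ with $d_\T(M,\tau_1)<R$, you need a \emph{lower} bound, depending only on $R$, on $\operatorname{dist}_{Q(M)}\bigl(\varphi,\,\partial\beta_M(\T(M))\bigr)$. You assert that this follows from the Nehari bound together with a Schwarz-lemma argument, but Schwarz-type estimates (and the Cauchy estimates that underlie them) only give \emph{upper} bounds on derivatives of the base-point-change maps $\beta_{\tau_1}\circ\beta_M^{-1}$, which translate into \emph{upper} bounds on how far inside the domain one sits. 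The step that would turn this into a lower bound is a Koebe $1/4$-type distortion theorem, which is false for holomorphic maps between infinite-dimensional Banach balls; and completeness of the Kobayashi metric on $\beta_M(\T(M))$ does not by itself convert Kobayashi distance into norm distance from the boundary. So, as written, you only obtain the upper Lipschitz bound on the fixed small ball $B_{Q(M)}(0,\rho_0/2)$ (where Ahlfors--Weill gives you a chart), not on all of $\beta_M(B_{\T(M)}(M,R))$. You would need an additional quantitative ingredient keeping track of how the Ahlfors--Weill ball around $\beta_M(\tau_1)$ sits inside $Q(M)$ as $\tau_1$ ranges over $B_{\T(M)}(M,R)$; the paper avoids this entirely by transferring to $\T(\D)$ where Lehto's explicit constants are available.
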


\begin{proof} When $M=\D$, where $\D$ is the unit disc in the complex plane,  these inequalities are  known. We have
$$
\widehat{b}_l d_{Q(\D)}(\beta_{\D}(\tau_1),\beta_{\D}(\tau_2))\le d_{\T(\D)}(\tau_1,\tau_2) \le  \widehat{b}_u d_{Q(\D)}(\beta_{\D}(\tau_1),\beta_{\D}(\tau_2)),
$$
for any $\tau_1,\tau_2 \in B_{\T(\D)}(\D,R)$,  where $\widehat{b}_l$ is a universal constant and $\widehat{b}_u$ depends only on $R$.   
In this case, the lower bound is the formula $(4.4)$ from  page 113 in \cite[(III.4.2)]{L} and the upper bound is the formula  $(4.6)$ from  page 113 in \cite[(III.4.2)]{L} (as pointed out on page 113 in \cite[(III.4.2)]{L}, one can take $\widehat{b}_l=12$).

On the other hand, the Bers embedding $\beta_{\D}$ is an open mapping (see \cite{L}, \cite{FMbook}) and thus there exists a function $\widehat{a}(R)>0$
$$
B_{Q(\D)}(0, \widehat{a}(R)) \subset \beta_{\D}(B_{\T(\D)}(\D,R)).
$$
The inclusion
$$
\beta_{\D}(B_{\T(\D)}(\D,R)) \subset B_{Q(\D)}(0,6), 
$$
is Nehari's estimate and the inclusion.

We extend these inequalities to an arbitrary hyperbolic Riemann surface as follows. Recall the embeddings $\iota_{\T}:\T(M) \to \T(\D)$ and  $\iota_{Q}:Q(M) \to Q(\D)$ such that 
$\iota_Q \circ \beta_M = \beta_{\D} \circ \iota_{\T}$ (see for example \cite{FMbook}, \cite{L}, \cite{MS}). Then (see \cite[Theorem 3.3]{MS})
$$
d_{\T(\D)}(\iota_{\T}(\tau_1), \iota_{\T}(\tau_2)) \leq d_{\T(M)}(\tau_1, \tau_2) \leq 3 d_{\T(\D)}(\iota_{\T}(\tau_1), \iota_{\T}(\tau_2)),
$$
for all $\tau_1,\tau_2 \in \T(M)$. On the other hand, we have $d_{Q(M)}(\phi,\psi)=d_{Q(\D)}(\iota_Q(\phi),\iota_Q(\psi))$. Set $b_l=\widehat{b}_l$ and $b_u=3\widehat{b}_u$ and the inequalities from (\ref{jednacina}) follow. 
Similarly, set 
$$
a(R)=\widehat{a}(R/3).
$$
Then the inclusions (\ref{jednacina-druga})  follow and we are finished.
\end{proof}

\subsection{Estimates for covering numbers of balls in $\T_g$} Applying Theorem \ref{prop10} of the previous subsection to a genus $g \ge 2$ Riemann surface $S$ we find
\begin{equation}\label{jednacina-1}
b_l d_{Q(S)}(\beta_S(\tau_1),\beta_S(\tau_2))\le d_{\T_{g}}(\tau_1,\tau_2) \le  b_u d_{Q(S)}(\beta_S(\tau_1),\beta_S(\tau_2)),
\end{equation}
for any $\tau_1,\tau_2 \in B_{\T_{g}}(S,R)$, and
\begin{equation}\label{jednacina-druga-1}
B_{Q(S)}(0,a(R)) \subset \beta_S(B_{\T_{g}}(S,R)) \subset B_{Q(S)}(0,6), 
\end{equation}

Combining these results with Theorem \ref{thm2} we derive the following theorem:

\begin{theorem}
\label{cor2}
There exists a constant $L = L(R)$ such that

\begin{equation}\label{glava}
\eta_{Q(S)}(B_{Q(S)}(0,a(R)), 2Lr)  \leq \eta_{\T_{g}} (B_{\T_{g}}(S,R),r) \leq \eta_{Q(S)} ( B_{Q(S)}(0,6),\frac{r}{2L}),
\end{equation}
where $a(R)$ is the  function from Theorem \ref{prop10}
\end{theorem}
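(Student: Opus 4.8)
The plan is to deduce Theorem \ref{cor2} from Theorem \ref{prop10} (in the form \eqref{jednacina-1} and \eqref{jednacina-druga-1}) together with the quasi-invariance of covering numbers, Theorem \ref{thm2}, applied to the Bers embedding $\beta_S$ restricted to a ball of radius $R$. First I would set $L = L(R) := \max\{b_u, 1/b_l\}$, so that \eqref{jednacina-1} says precisely that $\beta_S$ maps $B_{\T_g}(S,R)$ onto its image $\beta_S(B_{\T_g}(S,R)) \subset Q(S)$ with an $L$-bi-Lipschitz distortion (with respect to the metrics $d_{\T_g}$ and $d_{Q(S)}$). The only subtlety is that Theorem \ref{thm2} as stated covers entire metric spaces, so I would apply it to the restricted bi-Lipschitz homeomorphism $\beta_S : B_{\T_g}(S,R) \to \beta_S(B_{\T_g}(S,R))$, regarding both sides as metric spaces in their own right; this is legitimate since bi-Lipschitz is an intrinsic notion and Theorem \ref{thm2} only used surjectivity.

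For the upper bound in \eqref{glava}, I would argue: a cover of $\beta_S(B_{\T_g}(S,R))$ by balls (in $\beta_S(B_{\T_g}(S,R))$) of radius $r/(2L)$ pulls back under $\beta_S^{-1}$, which is $L$-Lipschitz, to a cover of $B_{\T_g}(S,R)$ by sets of diameter at most $r/L$, hence contained in balls of radius $r$ in $\T_g$. So $\eta_{\T_g}(B_{\T_g}(S,R),r) \le \eta_{\beta_S(B_{\T_g}(S,R))}(\beta_S(B_{\T_g}(S,R)), r/(2L))$ — I use radius $r/(2L)$ rather than $r/L$ to absorb, via \eqref{obvfact-1}, the passage from covering numbers computed inside the subspace $\beta_S(B_{\T_g}(S,R))$ to those computed in the ambient $Q(S)$. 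Then by \eqref{jednacina-druga-1} we have $\beta_S(B_{\T_g}(S,R)) \subset B_{Q(S)}(0,6)$, so by monotonicity \eqref{obvfact} this is at most $\eta_{Q(S)}(B_{Q(S)}(0,6), r/(2L))$, giving the right-hand inequality.

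For the lower bound, I run the same machinery in the other direction: $\beta_S$ itself is $L$-Lipschitz, so a cover of $B_{\T_g}(S,R)$ by $N$ balls of radius $r$ maps to a cover of $\beta_S(B_{\T_g}(S,R))$ by $N$ sets of diameter $\le 2Lr$, hence by $N$ balls of radius $2Lr$ in $Q(S)$. Since $B_{Q(S)}(0,a(R)) \subset \beta_S(B_{\T_g}(S,R))$ by \eqref{jednacina-druga-1}, these $N$ balls also cover $B_{Q(S)}(0,a(R))$, whence $\eta_{Q(S)}(B_{Q(S)}(0,a(R)), 2Lr) \le N$; taking the infimum over all such covers gives $\eta_{Q(S)}(B_{Q(S)}(0,a(R)), 2Lr) \le \eta_{\T_g}(B_{\T_g}(S,R), r)$, the left-hand inequality.

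The main obstacle — really the only place requiring care rather than bookkeeping — is keeping the bi-Lipschitz direction straight and correctly tracking the factors of $2$ and $L$ through the switches between "covering number of a set inside a subspace" and "covering number of a set inside the ambient Banach space"; this is exactly what \eqref{obvfact} and \eqref{obvfact-1} are for, and the factor-$2$ slack in the radii ($2Lr$ on the left, $r/(2L)$ on the right) is built in precisely to make \eqref{obvfact-1} applicable. No genuinely new idea is needed beyond Theorems \ref{thm2}, \ref{prop10} and the two elementary facts \eqref{obvfact}, \eqref{obvfact-1}.
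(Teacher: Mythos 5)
Your approach is the same as the paper's: restrict $\beta_S$ to the ball $B_{\T_g}(S,R)$, observe via Theorem \ref{prop10} that it is an $L(R)$-bi-Lipschitz homeomorphism onto its image $E=\beta_S(B_{\T_g}(S,R))$, apply Theorem \ref{thm2} to this restricted map, and then pass between subspace and ambient covering numbers with \eqref{obvfact} and \eqref{obvfact-1}, using the inclusions \eqref{jednacina-druga-1}. Your lower bound argument is essentially exactly what the paper intends by ``proved similarly,'' and it is correct.

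There is one small bookkeeping slip in the upper-bound direction. After arriving at $\eta_{\T_g}(B_{\T_g}(S,R),r) \le \eta_E(E, r/(2L))$, you bound the right side by $\eta_{Q(S)}(B_{Q(S)}(0,6), r/(2L))$ citing monotonicity \eqref{obvfact}. But \eqref{obvfact} only compares covering numbers of two sets inside the \emph{same} ambient space, whereas here the ambient space changes from the subspace $E$ to $Q(S)$. That passage is governed by \eqref{obvfact-1}, which costs a factor of $2$ in the radius; so what your argument actually yields is $\eta_E(E, r/(2L)) \le \eta_{Q(S)}(E, r/(4L)) \le \eta_{Q(S)}(B_{Q(S)}(0,6), r/(4L))$. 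The cleaner ordering is to quote Theorem \ref{thm2} at its sharp radius to get $\eta_{\T_g}(B_{\T_g}(S,R),r) \le \eta_E(E, r/L)$ (the paper's proof of Theorem \ref{thm2} tracks centers, not diameters, so there is no extra factor-of-$2$ loss there), then apply \eqref{obvfact-1} to obtain $\le \eta_{Q(S)}(E, r/(2L))$, and finally apply \eqref{obvfact}. In any case this is not a substantive gap: since $L=L(R)$ is an unspecified constant, the stray factor can be absorbed by renaming, and the result as stated still holds.
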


\begin{proof} We have from Theorem \ref{prop10} that the restriction of  $\beta_S$ is $L=L(R)$ bi-Lipschitz on the ball $B_{\T_{g}}(S,R)$. Then  the inequality
$$
\eta_{B_{\T_{g}}} (B_{\T_{g}}(S,R),r) \leq \eta_{E } (E ,\frac{r}{L})
$$
follows from Theorem \ref{thm2}, where $E=\beta_S(B_{\T_{g}}(S,R))$. Now the second inequality in (\ref{glava}) follows from the previous inequality and (\ref{obvfact}) and  (\ref{obvfact-1}). The first inequality in (\ref{glava})  is proved similarly.
\end{proof}

\subsection{The proof of Theorem \ref{thmteich}} Recall that Theorem \ref{thmteich} states that if $S \in \T_g$, $R>0$ and $B_{\T_{g}}(S,R)$ is a ball in $\T_g$, where $S$ has injectivity radius $\epsilon >0$, then there exist $d_1=d_1(R,r) \ge 0$ and 
$d_2=d_2(\epsilon,R,r)>0$ such that
\[ d^{g}_1 \leq  \eta_{\T_{g}}(B_{\T_{g}}(S,R),r) \leq d^{g}_{2},\]
for large $g$ and where $d_1(R,r) \to \infty$ when $r \to 0$ and  when $R$ is fixed.
 
The lower bound 

\[ d^{g}_1 \leq  \eta_{\T_{g}}(B_{\T_{g}}(S,R),r) ,\]
follows from the first inequality in (\ref{glava}) and Lemma \ref{banach}. 

To prove the second inequality
\[ \eta_{\T_{g}}(B_{\T_{g}}(S,R),r) \leq d^{g}_2,\]
we observe that from (\ref{glava}) we have 
\begin{equation}\label{ova}
\eta_{\T_{g}} (B_{\T_{g}}(S,R),r) \leq \eta_{Q(S)} ( B_{Q(S)}(0,6),\frac{r}{2L}),
\end{equation}
for $L=L(R)$. Thus, it is enough to be able to estimate the corresponding covering number in the linear space $Q(S)$.
The following theorem is proved in the last section.

\begin{theorem}[Covering number for $Q(S)$]
\label{thm3}
Let $S$ be a closed surface of genus $g$ and injectivity radius $\epsilon >0$. Let
$B_{Q(S)}(0, R)$ be a ball of radius $R$ in $Q(S)$ centered at $0 \in Q(S)$. Then there exists $c=c(\epsilon,R,r)$ such that
\[ \eta_{Q(S)} (B_{Q(S)} (0, R),r) \leq  c^{g} .\]
\end{theorem}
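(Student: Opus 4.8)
The plan is to estimate the dimension of $Q(S)$ and control the "size" of the ball $B_{Q(S)}(0,R)$ relative to $r$ in terms that grow only exponentially in $g$. Recall that $Q(S)$ is the space of holomorphic quadratic differentials on a closed Riemann surface of genus $g$, which by Riemann--Roch has complex dimension $3g-3$, hence real dimension $6g-6$. If $Q(S)$ carried a Euclidean norm, Lemma \ref{banach} (together with the trivial matching upper bound for balls in $\R^n$) would immediately give a bound of the shape $(CR/r)^{6g-6}\le c^g$ once we absorb the polynomial-in-$g$ factor; but the subtlety is that $Q(S)$ is equipped with the \emph{supremum} norm $\|\cdot\|_{Q(S)}$, not a Euclidean one, so I cannot directly invoke a volume argument. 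The strategy is therefore to compare $\|\cdot\|_{Q(S)}$ with a more tractable Hilbert norm (the $L^2$ norm with respect to the hyperbolic area form, $\|\varphi\|_2^2=\int_S \rho_S^{-2}|\varphi|^2\,dA$), for which covering numbers of balls are controlled by dimension alone.

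First I would fix the $L^2$ inner product on $Q(S)$ and observe that, since the injectivity radius of $S$ is at least $\epsilon$, there is a \emph{uniform} (genus-independent) two-sided comparison between $\|\cdot\|_{Q(S)}$ and a rescaled $L^2$ norm. Concretely, one direction is easy: $\|\varphi\|_2^2=\int_S \rho_S^{-2}|\varphi|^2\,dA\le \|\varphi\|_{Q(S)}^2\,\mathrm{Area}(S)=\|\varphi\|_{Q(S)}^2\cdot 2\pi(2g-2)$, so $\|\varphi\|_2\le C\sqrt{g}\,\|\varphi\|_{Q(S)}$. The reverse inequality is where the lower bound on injectivity radius enters: a submean-value / Bergman-kernel estimate on an embedded hyperbolic disc of radius $\epsilon$ around any point $z\in S$ gives $\rho_S^{-2}(z)|\varphi(z)|^2 \le C(\epsilon)\int_{B(z,\epsilon)}\rho_S^{-2}|\varphi|^2\,dA \le C(\epsilon)\|\varphi\|_2^2$, so $\|\varphi\|_{Q(S)}\le C(\epsilon)^{1/2}\|\varphi\|_2$. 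Hence $\|\cdot\|_{Q(S)}$ and $\|\cdot\|_2$ are comparable with a distortion factor that is at worst $C(\epsilon)^{1/2}\cdot C\sqrt{g}$, i.e. polynomial in $g$.

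Next I would run the volume-packing argument in the Euclidean space $(Q(S),\|\cdot\|_2)$, which is a real Hilbert space of dimension $N:=6g-6$. A ball of radius $R$ in the sup norm is contained in an $\|\cdot\|_2$-ball of radius $R':=C(\epsilon)^{1/2}\sqrt{g}\,R$ (by the comparison above, or rather by the easy direction), and an $\|\cdot\|_2$-ball of radius $r'$ contains an $\|\cdot\|_{Q(S)}$-ball of comparable radius; by the standard volume bound, an $\|\cdot\|_2$-ball of radius $R'$ is covered by at most $(3R'/\rho)^N$ $\|\cdot\|_2$-balls of radius $\rho$, and choosing $\rho$ a suitable polynomial-in-$g$ multiple of $r$ we get
\[
\eta_{Q(S)}(B_{Q(S)}(0,R),r)\le \left(\frac{C(\epsilon,R)\,g^{3/2}}{r}\right)^{6g-6}.
\]
Finally, I would note that $\bigl(C(\epsilon,R)g^{3/2}/r\bigr)^{6g-6}\le c(\epsilon,R,r)^g$ for all large $g$, since $\bigl(g^{3/2}\bigr)^{6g}=g^{9g}$ grows slower than $c^g$ fails --- wait, that is false, so the honest accounting is: $(A g^{3/2}/r)^{6g}= (A/r)^{6g} g^{9g}$, and $g^{9g}$ is \emph{not} bounded by $c^g$. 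This is the point I expect to be the main obstacle, and it shows the crude comparison above loses too much: the factor $\sqrt{g}$ in the norm comparison, raised to the power $g$, is fatal.

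To fix this I would instead avoid comparing the two norms with a $g$-dependent constant over the \emph{whole} space and instead use the structure more carefully --- e.g. decompose $S$ into a bounded number (depending only on $\epsilon$, via the thick part having bounded geometry) of charts, on each of which $Q(S)$ restricts to a space of holomorphic functions on a fixed domain where sup and $L^2$ norms are comparable with a \emph{universal} constant, and where the dimension is still $O(g)$; alternatively, invoke John's theorem to find a Euclidean norm comparable to $\|\cdot\|_{Q(S)}$ with distortion $\sqrt{N}=\sqrt{6g-6}$ and then check that even this $\sqrt{g}$ distortion only contributes $(\sqrt g)^{6g}$, which again is too big --- so the \emph{genuine} fix must be that the sup norm and an appropriate Hilbert norm on $Q(S)$ are comparable with a constant depending only on $\epsilon$, \emph{not} on $g$. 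The correct such statement: on an $\epsilon$-thick surface, $\|\varphi\|_{Q(S)}\le C(\epsilon)\|\varphi\|_2$ (submean value, as above) and also $\|\varphi\|_2\le C(\epsilon)\|\varphi\|_{Q(S)}$ is \emph{not} available globally --- but one only needs the embedding $B_{Q(S)}(0,R)\hookrightarrow B_{L^2}(0,\mathrm{something})$, and here the loss $\sqrt{\mathrm{Area}}=O(\sqrt g)$ seems unavoidable. Thus the real plan is to work with a covering argument intrinsic to the sup norm: cover $B_{Q(S)}(0,R)$ by first covering $S$ by $O(g)$ $\epsilon/2$-discs $\{D_\alpha\}$, using the submean-value estimate to see that the map $\varphi\mapsto (\text{low-order Taylor data of }\varphi\text{ on each }D_\alpha)$ is, after truncating at a bounded order $m=m(\epsilon,R,r)$, an $r/2$-net-inducing quasi-isometry onto a product of $O(g)$ copies of a fixed finite-dimensional space, each of which needs only $c_0(\epsilon,R,r)$ balls; multiplying gives $c_0^{O(g)}=c^g$. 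The main obstacle, then, is precisely showing that bounded-order Taylor data on an $O(g)$-sized net of discs determines $\varphi\in B_{Q(S)}(0,R)$ up to sup-norm error $r$ with constants independent of $g$ --- this is a Bergman-space/normal-families estimate that uses the uniform (in $g$) lower bound on injectivity radius, exactly the hypothesis flagged as essential in the paper's discussion of the punctured case.
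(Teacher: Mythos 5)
Your final plan is essentially the paper's argument, and your running diagnosis along the way is sound. The initial $L^2$-versus-sup comparison really does fail for the reason you identify: the hyperbolic area of $S$ is linear in $g$, so the two norms on $Q(S)$ are equivalent only up to a factor $\sqrt{g}$, and $(\sqrt{g})^{6g-6}$ dwarfs $c^g$. No renormalization cures this, because a quadratic differential of sup-norm $1$ can concentrate in a bounded region, making its normalized $L^2$ norm as small as $g^{-1/2}$; the equivalence constant is genuinely $\Theta(\sqrt{g})$. The fix you settle on --- cover $S$ by $O(g)$ small embedded discs and reduce the covering count to one for a finite-dimensional sup-normed space $\C^n$ with $n = O_\epsilon(g)$ --- is exactly what the paper does (Theorem \ref{thm23may} combined with Lemma \ref{lemma5}).

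Where the proposal stops short, though, is precisely the step you flag as ``the main obstacle'': you never prove the estimate that finitely many samples of $\varphi$ determine it in sup norm with constants independent of $g$. The paper's version of this is Lemma \ref{lemma2}: given $\xi > 0$ there is a $\delta = \delta(\xi) > 0$ such that for every $\varphi \in Q(\D)$ with $\|\varphi\|_{Q(\D)} \le 1$ and every $z,w$ with $d(z,w) < \delta$ one has $|\rho^{-2}(z)\varphi(z) - \rho^{-2}(w)\varphi(w)| < \xi$. This is elementary equicontinuity: the Bers norm bound gives $|\varphi| \le \rho^2 \le 64/9$ on the disc $\{|t| \le 1/2\}$, the Cauchy integral formula then yields a Lipschitz bound for $\varphi$ there, a separate explicit bound handles $\rho^{-2}$, and combining the two gives the claim. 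No Bergman-kernel machinery is needed. With that lemma in hand, covering $S$ by $n \le K(\epsilon,\delta)g$ discs of radius $\delta$ (Lemma \ref{lemma1}, via the triangulation of Lemma \ref{kmlemma1}) and sending $\varphi$ to the $n$-tuple of values $\bigl((\rho_S^{-2}\varphi)(p_j)\bigr)_{j=1}^n$ gives a linear map into $\C^n$ that is $(1-\xi)^{-1}$-bi-Lipschitz onto its image; the sup-norm covering count for $\C^n$ balls (Lemma \ref{lemma5}) then finishes.

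Your variant --- discs of fixed radius $\epsilon/2$ plus order-$m$ jets, $m = m(\epsilon,R,r)$ --- would also work: on each embedded hyperbolic disc the function $\varphi$ is a bounded holomorphic function in a chart, Cauchy estimates give geometric decay of Taylor coefficients, and a uniform truncation order $m$ gives $L^\infty$ error $< r/2$. But this buys nothing over the paper's $m=0$ choice, since the number of discs of radius $\delta(\xi)$ is still $\le K(\epsilon,\delta)g$ and only the base $c$ changes. So the verdict is: your route is the right one and coincides with the paper's, but as written you have asserted rather than proved the key quantitative equicontinuity input, which is the only nontrivial analytic content of the theorem and is the only place the injectivity-radius hypothesis is actually used. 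Supply that estimate (the Cauchy-integral computation above, or your jet-truncation version) and the proof is complete.
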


From (\ref{ova}) and Theorem \ref{thm3} we have
$$
\eta_{\T_{g}} (B_{\T_{g}}(S,R),r) \leq \eta_{Q(S)} ( B_{Q(S)}(0,6),\frac{r}{2L}) \leq d^{g}_2,
$$
for some $d_2=d_2(\epsilon,R,r)$. This completes the proof of Theorem \ref{thmteich}.

\section{Covering number for $Q(S)$}

Let $S$ be a closed surface of genus $g\geq 2$ and injectivity radius $\epsilon$ that is fixed throughout this section. The aim of this section is to prove Theorem \ref{thm3} by computing $\eta_{Q(S)}(E,r)$ when $E$ is a ball.
We consider $\C^n$ as the Banach space equipped with the supremum norm. 

\begin{theorem}
\label{thm23may} Let $S$ be a genus $g$ closed Riemann surface of injectivity radius $\epsilon$. There exist a universal constant $\alpha \ge 1$, a constant $K=K(\epsilon)$, and a $\alpha$-bi-Lipschitz linear isomorphism $F:Q(S) \to V$, where $V$ is a linear subspace of $\C^{n}$, for some  $n \le Kg$.
\end{theorem}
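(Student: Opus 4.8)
The plan is to build the isomorphism $F$ by choosing, for each point $p$ in a maximal $\epsilon/4$-separated net $\{p_1,\dots,p_n\}$ in $S$, a coordinate chart to the unit disc $\D$ modeled on the injectivity-radius ball $B_S(p_i,\epsilon/2)$, and to define the $i$-th coordinate of $F(\varphi)$ to be a (rescaled) value of the local expression of $\varphi$ at $p_i$ in that chart, say $\rho_S^{-2}(p_i)\varphi(p_i)$ read in the chart. The target $V$ is the image of $Q(S)$ under the resulting linear map $\C$-linear map $Q(S)\to\C^n$; it is automatically a linear subspace, and $F$ is linear and injective provided the net is fine enough — injectivity is where the hypothesis of bounded geometry enters. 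First I would record the two standard consequences of injectivity radius $\ge\epsilon$: the number of points in an $\epsilon/4$-net is at most $Kg$ with $K=K(\epsilon)$ (area of $S$ is $2\pi(2g-2)$, each net ball has definite area, so $n\le Kg$), and around each $p_i$ there is a conformal embedding $\psi_i\colon B_S(p_i,\epsilon/2)\to\D$ with uniformly controlled distortion of the hyperbolic metric, since the hyperbolic metric on a ball of radius $\epsilon/2$ about a point with injectivity radius $\ge \epsilon$ is comparable, with constants depending only on $\epsilon$, to the Euclidean metric pulled back from a fixed-size disc.

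The core estimate is the two-sided bound $\|\varphi\|_{Q(S)} \asymp \max_i |\,\rho_S^{-2}(p_i)\varphi(p_i)\,|$ with universal (or $\epsilon$-dependent) constants. The inequality $\max_i|\rho_S^{-2}(p_i)\varphi(p_i)| \le \|\varphi\|_{Q(S)}$ is immediate from the definition of the sup norm. For the reverse inequality I would use a \emph{mean-value / subharmonicity} argument: the function $z\mapsto \rho_S^{-2}(z)|\varphi(z)|$ does not literally satisfy a maximum principle, but on each chart ball $\varphi$ pushes forward to a holomorphic function on a fixed disc whose modulus is subharmonic, and $\rho_S^{-2}$ varies by a bounded factor across a ball of radius $\epsilon/4$ by the bounded-geometry comparison. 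Hence the value of $\rho_S^{-2}|\varphi|$ at the \emph{center} $p_i$ of such a ball controls, up to a constant $C(\epsilon)$, its supremum over $B_S(p_i,\epsilon/4)$; since the balls $B_S(p_i,\epsilon/4)$ cover $S$ (the net being maximal $\epsilon/4$-separated, hence $\epsilon/4$-dense), taking the max over $i$ recovers $\|\varphi\|_{Q(S)}$ up to $C(\epsilon)$. Wait — I should double-check the subharmonicity step: holomorphicity of $\varphi$ gives $|\varphi|$ subharmonic, and $\rho_S^{-2}$ is a smooth positive function with bounded logarithmic oscillation on the chart ball, so the product is bounded above by $C(\epsilon)$ times its value at the center \emph{only after} invoking that the sup of a subharmonic function on the smaller ball is bounded by a Harnack-type constant times its value at the center; for subharmonic functions one only gets the sub-mean-value inequality $|\varphi(p_i)| \le \fint_{B}|\varphi|$, which is the wrong direction. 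The correct route is instead: on the larger chart ball $B_S(p_i,\epsilon/2)$, if $\rho_S^{-2}|\varphi|$ attains a large value at some interior point $q$, apply the sub-mean-value inequality on a small ball around $q$ contained in $B_S(p_i,\epsilon/2)$ and use that this ball contains a net point $p_j$ within a definite fraction of the radius — a compactness/covering combinatorial argument showing that the net is fine enough relative to $\epsilon/2$ that the supremum is always nearly realized near a net point. This comparison of sup-over-$S$ to max-over-net, with $\epsilon$-dependent but $g$-independent constants, is the step I expect to be the main obstacle, and it is exactly the place where closedness and the lower bound on injectivity radius are used essentially (as flagged in the paper's Punctured Surfaces subsection).

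Granting the two-sided norm comparison, $F$ is a linear isomorphism onto $V:=F(Q(S))\subset\C^n$ that distorts norms by a factor at most $\alpha$, where $\alpha$ depends only on $\epsilon$; to get a \emph{universal} $\alpha\ge 1$ as stated, I would absorb the $\epsilon$-dependence into the choice of charts — rescaling each coordinate by $\rho_S^{-2}(p_i)$ and by the local chart normalization so that the resulting comparison constants are absolute, leaving only $n\le K(\epsilon)g$ carrying the $\epsilon$-dependence. Finally I would note that $F$ being $\C$-linear, injective, and norm-comparable is all that is claimed; $V$ inherits the supremum norm from $\C^n$, and $\dim_\C V = \dim_\C Q(S) = 3g-3$, consistent with $n\le Kg$. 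Assembling: choose the net and charts, prove $n\le Kg$, prove the norm equivalence via the subharmonicity-plus-covering argument, and define $F$ accordingly — this yields Theorem~\ref{thm23may}.
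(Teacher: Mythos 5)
Your overall architecture matches the paper's: choose $n\le Kg$ points / small balls on $S$ using bounded geometry, send $\varphi\in Q(S)$ to the vector of values $\rho_S^{-2}(p_j)\varphi(p_j)$ read in local charts, observe $\max_j|\rho_S^{-2}(p_j)\varphi(p_j)|\le\|\varphi\|_{Q(S)}$ is trivial, and prove a reverse inequality to get bi-Lipschitzness. You also correctly note that the naive Harnack-for-subharmonic-functions idea goes the wrong way. The problem is that your proposed \emph{fix} does not work either. The sub-mean-value inequality, applied at a point $q$ where $\rho_S^{-2}|\varphi|$ is nearly maximal, gives $|\varphi(q)|\le\fint_{B(q,r)}|\varphi|$. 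That lower-bounds an \emph{average} of $|\varphi|$ over a small ball by a large quantity; it says nothing about the value of $\varphi$ at the particular net point $p_j$ sitting inside that ball, which could in principle be a zero of $\varphi$. No amount of ``the net is fine enough'' or covering combinatorics fixes this, because the issue is not where the net points lie but that averaging controls no single point. What you actually need is a pointwise modulus of continuity for $\rho_S^{-2}\varphi$ on chart balls, uniform in $\varphi$ with $\|\varphi\|_{Q(S)}\le 1$, and that is exactly what the paper establishes (Lemma~\ref{lemma2}): in a fixed chart, the Cauchy integral formula gives a Lipschitz bound $|\varphi(z)-\varphi(w)|\le C_1\,\delta$ on the $\delta$-ball from $\sup|\varphi|\le\rho^{-2}$, and an elementary bound controls the oscillation of $\rho^{-2}$ itself; combining these makes $\rho^{-2}\varphi$ vary by at most $\xi$ over a $\delta$-ball, so the supremum is controlled by the value at any nearby net point up to an additive $\xi$, hence multiplicatively up to $(1-\xi)^{-1}$ after normalizing $\|\varphi\|=1$.

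A secondary, smaller remark: the paper gets the universal constant $\alpha$ essentially for free, since $\alpha=(1-\xi)^{-1}$ and $\xi>0$ can be taken as small as one likes (only $K=K(\epsilon,\xi)$ inflates with the choice). Your handwave about ``absorbing the $\epsilon$-dependence into the charts by rescaling'' is not needed and also not obviously meaningful, since the quantity $\rho_S^{-2}(p_j)\varphi(p_j)$ is already chart-invariant in modulus; the right observation is simply that the bi-Lipschitz constant of the evaluation map is near $1$ once the grid is fine, with only the number of grid points carrying the $\epsilon$-dependence.
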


Assuming this theorem for the moment, it reduces Theorem \ref{thm3} to estimating the covering number for balls in $\C^n$ with the supremum norm, which we do in the following lemma.

\begin{lemma}
\label{lemma5}
Let $B_{\C^{n}}(0,R)$ be the ball of radius $R$ in $\C^n$. Then
\[ \eta (B_{\C^{n}}(0,R),r) \leq \left ( \frac{2\sqrt{2}R}{r} +2 \right )^{2n}.\]
\end{lemma}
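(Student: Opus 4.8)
The plan is to reduce the problem over $\C^n$ to a counting problem over a real lattice. The key observation is that $\C^n$ with the supremum norm is isometric to $\R^{2n}$ with a norm that is comparable to the Euclidean $\ell^\infty$ norm: writing each coordinate $z_k = x_k + i y_k$, the supremum norm $\max_k |z_k| = \max_k \sqrt{x_k^2 + y_k^2}$ satisfies $\max_k |z_k| \le \sqrt{2}\,\max(|x_k|,|y_k|) \le \sqrt{2}\,\max_k|z_k|$, so $B_{\C^n}(0,R)$ is contained in the $\ell^\infty_{\R^{2n}}$-cube of half-side $R$, namely $[-R,R]^{2n}$.

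First I would cover $[-R,R]^{2n}$ by a grid of small cubes. Partition each of the $2n$ real coordinate intervals $[-R,R]$ into $m := \lceil 2\sqrt{2}R/r \rceil$ subintervals of length at most $2R/m \le r/\sqrt{2}$. This yields $m^{2n}$ closed subcubes, each of $\ell^\infty_{\R^{2n}}$-diameter at most $r/\sqrt{2}$. By the comparison above, each such subcube has diameter at most $\sqrt{2}\cdot(r/\sqrt{2}) = r$ in the supremum norm of $\C^n$, hence is contained in a ball of radius $r$ in $\C^n$ (centered, say, at the center of the subcube). The subcubes whose intersection with $B_{\C^n}(0,R)$ is nonempty therefore give a cover of $B_{\C^n}(0,R)$ by at most $m^{2n}$ balls of radius $r$ in $\C^n$.

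Finally I would bound $m$: since $m = \lceil 2\sqrt{2}R/r\rceil \le 2\sqrt{2}R/r + 1$, we get $\eta(B_{\C^n}(0,R),r) \le m^{2n} \le \bigl(2\sqrt{2}R/r + 1\bigr)^{2n} \le \bigl(2\sqrt{2}R/r + 2\bigr)^{2n}$, which is the claimed bound (the slightly weaker constant $+2$ in the statement gives room to spare). There is no real obstacle here — the only point requiring care is tracking the factor $\sqrt{2}$ coming from identifying the complex supremum norm with a real $\ell^\infty$-type norm, and making sure the cube diameters really do sit inside radius-$r$ balls; the constant is deliberately stated with slack so these estimates need not be tight.
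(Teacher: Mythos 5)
Your proof is correct and follows essentially the same route as the paper: both arguments reduce to covering a cube in $\R^{2n}$ (with the real $\ell^\infty$ norm) by small subcubes or balls, and then transfer back to $\C^n$ via the $\sqrt{2}$-comparison between the complex and real supremum norms; your version just folds the transfer directly into the subcube construction rather than invoking the bi-Lipschitz covering-number lemma (Theorem \ref{thm2}) as a black box. One tiny point worth tightening in a write-up: since the half-diagonal of a subcube of side $r/\sqrt{2}$ has $\C^n$-sup distance at most $r/2 < r$ from the center, each subcube in fact sits inside the \emph{open} ball of radius $r$, which is what the covering-number definition requires.
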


\begin{proof} Consider the Banach space $\R^{2n}$ with its supremum norm. The vector spaces $\C^{n}$ and $\R^{2n}$ are isomorphic but are not isometric considering the respective supremum norms. However they are $\sqrt{2}$-bi-Lipschitz equivalent.   
Balls in $\R^{2n}$ with the supremum norm are geometrically cubes, which is not the case for $\C^n$ with the supremum norm. Since it is a little  easier to to get explicit estimates with geometric cubes, we first prove the statement of the lemma for the ball $B_{\R^{2n}}(0,R) \subset \R^{2n}$ with the supremum norm and then use the fact that $\R^{2n}$ is $\sqrt{2}$-bi-Lipschitz equivalent to $\C^n$ and Theorem \ref{thm2}.

We have $B_{\R^{2n}}(0,R) = [-R,R]^{2n} \subset \R^{2n}$. It is elementary to check that for any $\delta >0$, the union of balls
\[ \bigcup _{(i_{1},\ldots,i_{m}) \in \Z^{m}, \av i_{j} \av \leq C }  B_{\R^{2n}}((2 i_{1}r,\ldots ,2 i_{m}r),r + \delta) \]
covers the ball $B_{\R^{2n}}(0,R)$ if 
\[ C = \left \lceil \frac{1}{2} \left ( \frac{R}{r+\delta} - 1 \right ) \right \rceil, \]
where $\lceil y \rceil$ denotes the smallest integer greater or equal to $y \in \R$.

It then follows that the number of balls in this covering is 
\begin{align*}
(2C +1) ^{2n} &= \left ( 2\left [ \frac{1}{2} \left ( \frac{R}{r+\delta} - 1 \right ) \right ] +1 \right) ^{2n} \\ &\leq \left ( \frac{R}{r+\delta} +2 \right )^{2n} \\ &< \left ( \frac{R}{r} +2 \right )^{2n}.
\end{align*}
Using the fact that $\C^n$ with its supremum norm is $\sqrt{2}$-bi-Lipschitz equivalent to $\R^{m}$  with its supremum norm, and combining Theorem \ref{thm2} with (\ref{obvfact}) and  (\ref{obvfact-1}) completes the proof. 
\end{proof}

We can now prove Theorem \ref{thm3}.

\begin{proof}[Proof of Theorem \ref{thm3}]
By Theorem \ref{thm23may}, $Q(S)$ is $\alpha$-bi-Lipschitz equivalent
to a subspace $V$ of $\C^n$, where $n \leq Kg$ and $K$ depends only on the injectivity radius $\epsilon$ of $S$. Recall the mapping $F:Q(S) \to V$ from Theorem \ref{thm23may}.
By Theorem \ref{thm2}, (\ref{obvfact}) and  (\ref{obvfact-1}) 
\[ \eta _{Q(S)} (B_{Q(S)}(0,R),r) \leq \eta _V \left( F(B_{Q(S)}(0,R)),\frac{r}{2\alpha} \right ).\]
Since $F(B_{Q(S)}(0,R))$ is contained in the ball $B_V(0,R\alpha)$ in $V$, \eqref{obvfact} implies that
\[ \eta _V \left( F(B_{Q(S)}(0,R)),\frac{r}{2\alpha} \right ) \leq
\eta_V \left( B_V(0,R\alpha),\frac{r}{2\alpha} \right ).\]
Since $V$ is a subspace of $\C^n$, 
it follows from Proposition \ref{obvfact-1} that
\[ \eta_V \left( B_V(0,R\alpha),\frac{r}{\alpha} \right ) \leq \eta _{\C^{n}} \left( B_{\C^{n}}(0,R\alpha) ,\frac{r}{4\alpha} \right ).\]
Finally, Lemma \ref{lemma5} implies that
\[ \eta _{Q(S)} (B_{Q(S)}(0,R),r) \leq \left ( \frac{4\sqrt{2} \alpha^2 R}{r} +2 \right )^{2n}.\]
\end{proof}

It remains to prove Theorem \ref{thm23may}.
Recall that $S$ is a genus $g$ Riemann surface of injectivity radius at least $\epsilon$. The following is a brief outline of the construction of the linear map $F:Q(S) \to \C^{n}$ from Theorem \ref{thm23may}.

\begin{enumerate}
\item Estimate how many small balls are needed to cover $S$ in terms of the genus and injectivity radius.
\item Show the expression $\rho_S^{-2} \varphi$ does not vary too much on these balls for any $\varphi \in Q(S)$ of norm at most $1$.
\item Decompose $S$ into small subregions and define $F$ by averaging $\rho_S^{-2} \varphi$ on each small subregion.
\end{enumerate}

\begin{remark}We remark that a similar operator to $F$ was constructed in \cite{F2} for infinite type surfaces.
\end{remark}

We first estimate how many small balls it takes to cover $S$.

\begin{lemma}\label{lemma1} Let $\delta >0$. Then there exists $K=K(\epsilon,\delta)$ such that any genus $g$ Riemann surface $S$ of injectivity radius at least $\epsilon$  can be covered by $n$ balls of radius $\delta$, for some  $n\leq Kg$.
\end{lemma}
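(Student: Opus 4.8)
The plan is to bound the covering number of $S$ by a volume argument, using that $S$ is closed, hyperbolic, and $\epsilon$-thick. First I would recall that the area of a genus $g$ hyperbolic surface is $\operatorname{Area}(S) = -2\pi\chi(S) = 4\pi(g-1)$ by Gauss--Bonnet, so $\operatorname{Area}(S) \le 4\pi g$. Next I would take a maximal collection of points $p_1,\dots,p_n \in S$ that are $\delta$-separated, i.e.\ pairwise at distance at least $\delta$; maximality forces the balls $B_S(p_i,\delta)$ to cover $S$, since any uncovered point could be added to the collection. So it suffices to bound $n$ from above by $Kg$ for a suitable $K=K(\epsilon,\delta)$.

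To bound $n$, I would look at the disjoint balls $B_S(p_i,\delta/2)$ (disjoint by the $\delta$-separation) and compare their total area to $\operatorname{Area}(S)$. The key point where thickness enters: because $S$ has injectivity radius at least $\epsilon$, a ball of radius $\rho \le \epsilon$ in $S$ is isometric to a genuine hyperbolic disk of radius $\rho$ in $\D$, whose area is $2\pi(\cosh\rho - 1)$, a fixed positive constant depending only on $\rho$. So taking $\rho = \min\{\delta/2,\epsilon\}$, each ball $B_S(p_i,\rho)$ (which is contained in $B_S(p_i,\delta/2)$) has area exactly $A_0 := 2\pi(\cosh\rho - 1) > 0$. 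Since these are disjoint and contained in $S$, we get $n A_0 \le \operatorname{Area}(S) \le 4\pi g$, hence $n \le (4\pi / A_0) g =: Kg$, with $K = K(\epsilon,\delta)$ as claimed. The centers $p_i$ then give the required covering by $n \le Kg$ balls of radius $\delta$.

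The only mild subtlety — and the thing I'd be careful about rather than a genuine obstacle — is the lower area bound for small balls: one must use that the injectivity radius hypothesis makes $B_S(p_i,\rho)$ an embedded metric disk isometric to a hyperbolic disk of radius $\rho$, so that one cannot have the ball ``wrap around'' and have smaller area; truncating the radius at $\epsilon$ handles exactly this. Everything else (Gauss--Bonnet, the maximal-separated-set trick, disjointness) is routine. Thus $K$ depends only on $\epsilon$ and $\delta$ and not on $g$, completing the proof.
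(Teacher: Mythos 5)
Your proof is correct, and it takes a genuinely different route from the paper's. The paper reuses the triangulation machinery it has already set up: it invokes Lemma~\ref{kmlemma1} to get a triangulation $\tau \in \Delta(k,g)$ of $S$ whose triangles each sit inside a ball of radius $\epsilon$, covers each such ball by $C = C(\epsilon,\delta)$ balls of radius $\delta$, and then bounds the number of faces $F \le 2 + (k-2)g$ via Euler characteristic, so that $n \le 2kCg$. You instead run a standard volume-packing argument: a maximal $\delta$-separated set $\{p_1,\dots,p_n\}$ gives a cover by $\delta$-balls, the concentric $\delta/2$-balls are disjoint, and the injectivity radius hypothesis guarantees that each $B_S(p_i,\rho)$ with $\rho = \min\{\delta/2,\epsilon\}$ is an embedded hyperbolic disk of area $2\pi(\cosh\rho - 1)$, so comparing with Gauss--Bonnet area $4\pi(g-1)$ yields $n \le Kg$. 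Your argument is more elementary and self-contained (it avoids appealing to the existence of the bounded-degree triangulation), while the paper's has the virtue of reusing an ingredient it already needs elsewhere. Both yield $K = K(\epsilon,\delta)$ with no dependence on $g$, which is the whole point.
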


\begin{proof} Let $\tau \in \Delta(k,g)$ be a triangulation of $S$ given by Lemma \ref{kmlemma1}, and let
$\Delta$ be a geodesic triangle which is a face in $\tau$. Then $\Delta$ is contained in a ball of radius $\epsilon$ by Lemma \ref{kmlemma1}. Let $C = C(\epsilon,\delta)$ denote a constant such  that every ball of hyperbolic radius
$\epsilon$ in the hyperbolic disc $\D$ can be covered by at most $C$ balls of radius $\delta$. 

Let $V,E,F$ denote the number of vertices, edges and faces of $\tau$ respectively.
The Euler characteristic of $S$ is $\chi(S) = 2-2g$, and so we have by Lemma \ref{kmlemma1} that
\[ 2-2g = V-E+F \geq F-kg\]
and so 
\[ F \leq 2+(k-2)g\]
provides an upper bound for the number of faces in $\T$.
Using this, the number of balls of radius $\delta$ needed to cover $S$ is at most
\[ (2+(k-2)g)C \leq 2kCg,\]
and where $K=2kC$ depends only on $\epsilon$ and $\delta$.
\end{proof}

Now we know how many small balls are needed to cover $S$, we need to know how much the expression in the definition of the Bers norm varies over these balls.

\begin{lemma}
\label{lemma2}
Given $\xi>0$, there exists $\delta>0$ depending on $\xi$ such that
\[ \left \av \rho ^{-2}(z)\varphi (z) - \rho ^{-2}(w)\varphi (w) \right \av < \xi, \, \, \text{for every} \, \, z,w \in \D \]
for any $\varphi \in Q(\D)$ with $\av \av \varphi \av \av _{Q(\D)} \leq 1$, whenever $d(z,w)<\delta$, where $d$ and $\rho$ denote the hyperbolic metric and
hyperbolic density on $\D$ respectively.
\end{lemma}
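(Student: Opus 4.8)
The plan is to establish, uniformly over the family $\mathcal F=\{\,\rho^{-2}\varphi:\varphi\in Q(\D),\ \av \av \varphi\av \av _{Q(\D)}\le 1\,\}$, the equicontinuity of $z\mapsto\rho^{-2}(z)\varphi(z)$ with respect to the hyperbolic metric, by combining a Cauchy-estimate argument near the origin with the invariance of the hyperbolic metric under $\operatorname{Aut}(\D)$. Write $h_\varphi=\rho^{-2}\varphi$; note that $\av \av \varphi\av \av _{Q(\D)}\le 1$ gives both $\av h_\varphi(z)\av\le 1$ and $\av\varphi(z)\av\le\rho^2(z)$ for all $z\in\D$.

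First I would settle the special case $w=0$. Since $\av\varphi\av\le\rho^2$ and $\rho$ is a fixed smooth function, every $\varphi$ in the unit ball of $Q(\D)$ is bounded by a fixed constant $M$ on $\{\av z\av\le 1/2\}$; a Cauchy estimate on circles of radius $1/6$ then bounds $\av\varphi'\av$ by $6M$ on $\{\av z\av\le 1/3\}$, so each such $\varphi$ is $6M$-Lipschitz there, and consequently $h_\varphi=\rho^{-2}\varphi$ is $L$-Lipschitz on $\{\av z\av\le 1/3\}$ for some $L$ not depending on $\varphi$. Hence there is $\delta_1>0$ such that $d(z,0)<\delta_1$ forces $\av h_\varphi(z)-h_\varphi(0)\av<\xi/2$ for every admissible $\varphi$.

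Next I would transport this estimate to an arbitrary base point $w\in\D$ using the automorphism $g=g_w$ of $\D$ with $g(0)=w$ and $g'(0)>0$. Setting $\psi=(\varphi\circ g)\,(g')^2$, the transformation law $\rho(g(u))\,\av g'(u)\av=\rho(u)$ shows that $\psi\in Q(\D)$ with $\av \av \psi\av \av _{Q(\D)}=\av \av \varphi\av \av _{Q(\D)}$, and a direct computation gives $h_\psi(u)=\lambda(u)\,h_\varphi(g(u))$, where $\lambda=(g')^2/\av g'\av^2$ is unimodular with $\lambda(0)=1$. Now, given $z$ with $d(z,w)<\delta\le\delta_1$, put $z'=g^{-1}(z)$, so $d(z',0)<\delta$; applying the $w=0$ case to $\psi$ yields $\av \lambda(z')\,h_\varphi(z)-h_\varphi(w)\av<\xi/2$, and therefore
\[ \av h_\varphi(z)-h_\varphi(w)\av\ \le\ \av 1-\lambda(z')\av\,\av h_\varphi(z)\av+\frac{\xi}{2}\ \le\ \av 1-\lambda(z')\av+\frac{\xi}{2}. \]
A short computation with $g_w$ gives $\lambda(u)=\bigl((1+w\bar u)/(1+\bar w u)\bigr)^2$, so $\av 1-\lambda(z')\av$ is controlled by $\av\arg(1+\bar w z')\av$, which tends to $0$ as $\av z'\av\to 0$ uniformly in $\av w\av<1$; since $d(z',0)<\delta$ makes $\av z'\av$ small, choosing $\delta\le\delta_1$ small enough that $\av 1-\lambda(z')\av<\xi/2$ on all such balls completes the argument.

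I expect the main obstacle to be exactly the unimodular twist $\lambda=(g')^2/\av g'\av^2$: the family $\mathcal F$ is not literally $\operatorname{Aut}(\D)$-invariant — only $\{\av h_\varphi\av:\varphi\in\mathcal F\}$ is — so one cannot simply conjugate the base point $w$ to $0$ and quote the origin case. The twist must be carried along explicitly and shown to be uniformly close to $1$ on small hyperbolic balls; once this bookkeeping is done, the remaining ingredients (the Cauchy estimate, the chain rule, and the conformal transformation law of $\rho$) are routine.
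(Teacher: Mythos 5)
Your proof is correct, and it follows the same overall strategy the paper uses: a Cauchy--integral estimate for $\varphi$ near the origin, a Lipschitz bound for $\rho^{-2}$ there, and then transport to an arbitrary base point by an automorphism of~$\D$. The genuinely valuable extra content in your write-up is the handling of the unimodular factor $\lambda=(g')^2/|g'|^2$. The paper's proof simply declares ``without loss of generality $z,w\in B_{\D}(0,\delta)$'' and then works near the origin; but as you correctly observe, $\rho^{-2}\varphi$ is a $(-1,1)$-form, not a scalar function, so under a M\"obius change of variable it picks up precisely this twist, and the origin estimate gives $|\lambda(z')\,h_\varphi(z)-h_\varphi(w)|<\xi/2$ rather than $|h_\varphi(z)-h_\varphi(w)|<\xi/2$. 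Your computation $\lambda(u)=\bigl((1+w\bar u)/(1+\bar w u)\bigr)^2$ and the observation that $|1-\lambda(z')|\to 0$ uniformly in $w$ as $d(z',0)\to 0$ are exactly what is needed to make the paper's reduction rigorous, so your version supplies a step the published proof glosses over. (One small remark: an even shorter route is to bound the hyperbolic gradient of $\rho^{-2}\varphi$ directly --- using $\rho^{-3}|\varphi'|\le C\,\|\varphi\|_{Q(\D)}$ from a Cauchy estimate together with $|\partial_z\rho^{-2}|\le \rho^{-1}$ --- which gives a universal hyperbolic Lipschitz constant for $h_\varphi$ and avoids the conjugation altogether; but your argument is sound as written.)
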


\begin{proof}
Let $\xi >0$ and $\varphi \in Q(\D)$ with $\av \av \varphi \av \av _Q \leq 1$. 
Suppose that $\delta < 1/2$ and let  $B_{\D}(0,\delta) = \{ t \in \C : \av t \av <\delta\}$. 
Without loss of generality we may  assume that $z,w \in B_{\D}(0,\delta)$. 

For $\av p \av \leq 1/2$, we have
\begin{equation}
\label{l2eq1}
\av \varphi (p) \av \leq \rho^{-2}(p) \leq \frac{64}{9}.
\end{equation}
Thus, by the Cauchy integral formula and \eqref{l2eq1},
\begin{align*}
\av \varphi(z) - \varphi(w) \av &= \frac{1}{2\pi} \left \av \int_{\av t \av = 1/2} \left ( \frac{\varphi(t)}{z-t} - \frac{\varphi(t)}{w-t} \right ) \: dt \right \av \\
&\leq \frac{128\av z- w \av}{9(1-2 \delta)^2}
\end{align*}
and hence there exists a constant $C_1$ such that
if $z,w, \in B_{\D}(0,\delta)$, we have
\begin{equation}
\label{l2eq2}
\av \varphi(z) - \varphi (w) \av \leq C_1 \delta.
\end{equation}
We also have by elementary calculations that for $z,w, \in B_{\D}(0,\delta)$, there exists a constant
$C_2$ such that
\begin{equation}
\label{l2eq3}
\left \av \rho^{-2}(z) - \rho^{-2}(w) \right \av < C_2 \delta.
\end{equation}
Therefore, if $z,w, \in B_{\delta}$, by \eqref{l2eq2} and \eqref{l2eq3} we have
\begin{align*}
\left \av \rho ^{-2}(z)\varphi (z) - \rho ^{-2}(w)\varphi (w) \right \av &\leq \rho^{-2}(z) \av \varphi(z) - \varphi (w) \av + \av \varphi(w) \av \cdot \av \rho^{-2} (z) - \rho^{-2} (w) \av \\
&\leq \frac{(1-\delta^2)^2C_1 \delta}{4} + \frac{64C_2 \delta}{9}.
\end{align*}
Hence if $\delta$ is chosen small enough, we have 
\[ \left \av \rho ^{-2}(z)\varphi (z) - \rho ^{-2}(w)\varphi (w) \right \av < \xi .\]
\end{proof}

\begin{proof}[Proof of Theorem \ref{thm23may}]
We now are in a position to construct $F$.
Choose any $\xi >0$. Then choose $\delta >0$ such that the conclusions of Lemma \ref{lemma2}
are satisfied. Then use Lemma \ref{lemma1} to find a covering $B_1,\ldots, B_n$ of $S$
by balls of radius $\delta$, where $n \leq Kg$ and $K = K(\epsilon,\xi)$, since
$\delta$ depends on $\xi$. Moreover, for each ball $B_i$ we choose a coordinate chart $\widetilde{B_i}$.

For $j=1,\ldots, n$ choose $p_j \in B_j$ and denote by   $z_j \in \widetilde{B}_j$ the corresponding points (that is, $z_j \in \widetilde{B}_j$ represents the point $p_j \in S$).
Then for $\phi \in Q(S)$ we let $F(\phi)=(f_1,...,f_n) \in \C^{n}$, where for each $j=1,...,n$, we let
$$
f_j=(\rho^{-2}_S \phi)(z_j),
$$
and $\rho^{-2}_S\phi$ is the corresponding function in the chart $\widetilde{B}_j$.

\begin{remark} The map $F:Q(S) \to \C^{n}$ depends on the choice of charts $\widetilde{B}_j$ because $\rho^{-2}_S \phi$ is not a function on $S$, but it is a $(-1,1)$ complex form on $S$. As we observed before, the expression $|\rho^{-2}_S \phi|$ is a function on $S$, thus the norm of the linear map $F$ does not depend on the choice of charts  $\widetilde{B}_j$. 
\end{remark} 

By definition we have 
$$
||F(\phi)||_{\infty} \leq ||\phi||_{Q(S)}.
$$
On the other hand, it follows from Lemma \ref{lemma2} that for any $\phi \in Q(S)$, $||\phi||_{Q(S)}=1$, we have
$$
(1-\xi)||\phi||_{Q(S)} \leq ||F(\phi)||_{\infty}. 
$$
This shows that $F$ is an $\alpha$-bi-Lipschitz linear map onto its image with $\alpha = (1-\xi)^{-1}$. Since $\xi>0$ was arbitrary we are finished.
\end{proof}

\end{document}